\renewcommand{\tilde}[1]{\widetilde{#1}}
\renewcommand*{\bar}[1]{\overline{#1}}
\renewcommand*{\ker}{\mathrm{Ker}} 
\newcommand*{\Id}{\mathrm{Id}}
\newcommand*{\vect}[1]{\mathrm{Vect}\{#1\}}
\newcommand*{\rg}{\mathrm{rg}}
\newcommand*{\rk}{\mathrm{rk}}
\newcommand*{\Gal}{\mathrm{Gal}}
\newcommand*{\vcd}{\mathrm{vcd}}
\newcommand*{\cd}{\mathrm{cd}}
\newcommand*{\gd}{\mathrm{gd}}
\newcommand*{\SL}{\mathrm{SL}}
\newcommand*{\GL}{\mathrm{GL}}
\newcommand*{\SO}{\mathrm{SO}}
\newcommand*{\PSO}{\mathrm{PSO}}
\newcommand*{\PSL}{\mathrm{PSL}}
\newcommand*{\PSp}{\mathrm{PSp}}
\newcommand*{\PSU}{\mathrm{PSU}}
\newcommand*{\SU}{\mathrm{SU}}
\newcommand*{\Sp}{\mathrm{Sp}}
\newcommand*{\U}{\mathrm{U}}
\newcommand*{\Ho}{\mathrm{H}}
\newcommand*{\Aut}{\mathrm{Aut}}
\newcommand*{\Inn}{\mathrm{Inn}}
\newcommand*{\Out}{\mathrm{Out}}
\newcommand*{\Ad}{\mathrm{Ad}}
\newcommand*{\Isom}{\mathrm{Isom}}
\newcommand*{\Hom}{\mathrm{Hom}}
\newtheorem{thm}{Theorem}[section]
\newtheorem{cor}{Corollary}[section]
\newtheorem{lemma}{Lemma}[section]
\newtheorem{prop}{Proposition}[section]
\newtheoremstyle{mth}{0.15cm}{0.15cm}{\itshape}{}{\scshape}{}{0.5em}{\textbf{\thmname{#1}}}
\theoremstyle{mth}
\newtheorem{mth}{Main Theorem}[section]
\newtheoremstyle{de}{0.15cm}{0.15cm}{\upshape}{}{\upshape}{}{0.5em}{\underline{\textbf{\thmname{#1} \thmnumber{#2}}} #3}
\theoremstyle{de}
\newtheoremstyle{rmq}{0.15cm}{0.15cm}{}{}{\itshape}{}{0.5em}{\thmname{#1} \thmnumber{#2} #3}
\theoremstyle{rmq}
\newtheorem{rmq}{Remark}[section]
\newtheoremstyle{ex}{0.15cm}{0.15cm}{}{}{\itshape}{}{0.5em}{\thmname{#1} \thmnumber{#2} #3}
\theoremstyle{ex}
\newenvironment{demo}{\begin{proof}}{\end{proof}}
\author{Cyril Lacoste}
\title{Dimension rigidity of lattices in semisimple Lie groups}
\begin{document}

\maketitle

\begin{abstract}
We prove that if $\Gamma$ is a lattice in the group of isometries of a symmetric space of non-compact type without euclidean factors,  then the virtual cohomological dimension of $\Gamma$ equals its proper geometric dimension.
\end{abstract}

\section{Introduction}

Let $\Gamma$ be a discrete virtually torsion-free group. There exist several notions of "dimension" for $\Gamma$. One of them is the \textit{virtual cohomological dimension} $\vcd(\Gamma)$, which is the cohomological dimension of any torsion-free finite index subgroup of $\Gamma$. Due to a result by Serre, it does not depend on the choice of such a subgroup (see \cite{Brown}). Another one is the \textit{proper geometric dimension}. A $\Gamma$-CW-complex $X$ is said to be a model for $\underline{E}\Gamma$ if the stabilizers of the action of $\Gamma$ on $X$ are finite and for every finite subgroup $H$ of $\Gamma$, the fixed point space $X^H$ is contractible. Note that two models for $\underline{E}\Gamma$ are $\Gamma$-equivariantly homotopy equivalent to each other. The \textit{proper geometric dimension} $\underline{\gd}(\Gamma)$ of $\Gamma$ is the smallest possible dimension of a model for $\underline{E}\Gamma$.

These two notions are related. In fact, we always have the inequality
\[ \vcd(\Gamma) \leqslant \underline{\gd}(\Gamma) \]
but this inequality may be strict, see for instance the construction of Leary and Nucinkis in \cite{Leary-Nucinkis}, or other examples in \cite{Brady},
\cite{Leary}, \cite{Martinez}, \cite{Degrijse-Petrosyan}, \cite{Degrijse-Souto}.

However there are also many examples of virtually torsion-free groups $\Gamma$ with $\vcd(\Gamma)=\underline{\gd}(\Gamma)$. For instance in \cite{Degrijse} Degrijse and Martinez-Perez prove that this is the case for a large class of groups containing all finitely generated Coxeter groups. Other examples for equality can be found in \cite{Aramayona}, \cite{Aramayona-Martinez}, \cite{Luck} and \cite{Vogtmann}.

In this paper we will prove that equality holds for groups acting by isometries, discretely and with finite covolume on symmetric spaces of non-compact type without euclidean factors:

\begin{thm}\label{Théorème principal}
Let $S$ be a symmetric space of non-compact type without euclidean factors. Then
\[ \underline{\gd}(\Gamma)=\vcd(\Gamma) \]
for every lattice $\Gamma \subset \Isom(S)$.
\end{thm}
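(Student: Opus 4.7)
Since the inequality $\vcd(\Gamma) \leq \underline{\gd}(\Gamma)$ is automatic, the task is to exhibit, for every lattice $\Gamma$, a model for $\underline{E}\Gamma$ whose dimension equals $\vcd(\Gamma)$. Write $G = \Isom(S)$ and $S = G/K$ for a maximal compact subgroup $K$. The plan begins by observing that $S$ is always a model for $\underline{E}\Gamma$: it is a contractible $\mathrm{CAT}(0)$ manifold, every stabilizer $\Gamma \cap gKg^{-1}$ is finite, and for every finite subgroup $F \subset \Gamma$ the fixed-point set $S^F$ is a non-empty convex totally geodesic submanifold, hence contractible.

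When $\Gamma$ is uniform, $S$ itself is cocompact, and $\vcd(\Gamma) = \dim S$ follows from Poincar\'e duality applied to the closed manifold $\Gamma'\backslash S$ (with $\Gamma'$ a torsion-free finite-index subgroup of $\Gamma$). The uniform case is therefore immediate, giving $\underline{\gd}(\Gamma) \leq \dim S = \vcd(\Gamma)$. From now on I focus on the non-uniform case.

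For non-uniform $\Gamma$, the idea is to enlarge $S$ to its Borel--Serre bordification $\bar{S}^{BS}$ and then retract $\bar{S}^{BS}$ onto a lower-dimensional $\Gamma$-invariant spine. By Margulis arithmeticity, whenever $\mathrm{rk}_{\mathbb{R}}(G)\geq 2$ any non-uniform lattice is arithmetic and $\bar{S}^{BS}$ is the classical Borel--Serre construction; in real rank one, $\bar{S}^{BS}$ is just the $\Gamma$-equivariant truncation by small open horoballs at the finitely many cusps of $\Gamma\backslash S$. In both settings $\bar{S}^{BS}$ is a contractible cocompact $\Gamma$-manifold with corners of dimension $\dim S$ with finite cell stabilizers, and the action of $\Gamma$ on $S$ extends to $\bar{S}^{BS}$, so $\bar{S}^{BS}$ itself is a cocompact model for $\underline{E}\Gamma$ but with dimension too large by $r$, the $\mathbb{Q}$-rank of $\Gamma$ (with $r=1$ when $\mathrm{rk}_{\mathbb{R}}(G)=1$). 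Inside $\bar{S}^{BS}$ one then produces an equivariant deformation retract (a well-rounded / Ash--Soul\'e-type spine in the arithmetic case; a horospherical spine in the rank-one case) onto a $\Gamma$-subcomplex $W$ of dimension $\dim S - r = \vcd(\Gamma)$.

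The key technical obstacle, and the point where I expect the bulk of the work to lie, is showing that $W$ remains a model for $\underline{E}\Gamma$: cell stabilizers in $W$ are automatically finite, but the fixed-point set $W^F$ must still be contractible for every finite subgroup $F \subset \Gamma$. My strategy is to arrange the deformation retraction to be simultaneously equivariant under every finite subgroup $F$, so that $W^F$ is a deformation retract of $(\bar{S}^{BS})^F$; the latter should then be identified with a Borel--Serre bordification of the totally geodesic sub-symmetric space $S^F \subset S$, equipped with the parabolic stratification inherited from the rational parabolics of $G$ normalized by $F$, and this identification yields the required contractibility of $(\bar{S}^{BS})^F$ and hence of $W^F$. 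Making the compatibility between the spine and the fixed-point substrata uniform across all finite subgroups of $\Gamma$ at once is the main difficulty; once it is established, the bound $\underline{\gd}(\Gamma) \leq \dim W = \vcd(\Gamma)$ is immediate, completing the proof.
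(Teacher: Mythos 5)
Your proposed route is fundamentally different from the paper's, and the step you flag as "the key technical obstacle" is not a detail to be filled in later — it is an open problem that the paper deliberately circumvents. You propose to produce a $\Gamma$-equivariant deformation retract $W \subset \bar S^{BS}$ of dimension exactly $\vcd(\Gamma)$ and then argue that $W^F$ is contractible for every finite $F \subset \Gamma$. Such equivariant spines of the optimal dimension (well-rounded retracts) are known only in a handful of cases — notably $\SL(n,\mathbb{Z})$ — and the paper explicitly remarks in the introduction that it does \emph{not} construct any model of $\underline{E}\Gamma$ of dimension $\vcd(\Gamma)$, and that building one even for $\Sp(2n,\mathbb{Z})$ would be interesting future work. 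So "once it is established, the bound is immediate" is precisely the content that is missing; you have reduced the theorem to a harder unsolved problem rather than proving it.

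There is also a more local issue in your sketch: you want to identify $(\bar S^{BS})^F$ with a Borel--Serre bordification of $S^F$. This requires $C_\Gamma(F)$ to be an arithmetic lattice in $C_G(F)$ whose rational parabolic structure is the one induced from $G$, which is true only for very special finite subgroups $F$ and fails in general. The paper's actual argument avoids all of this by going through Bredon cohomology: Lück--Meintrup gives $\underline{\gd}(\Gamma) = \underline{\cd}(\Gamma)$ (for $\underline{\cd} \geq 3$), and one then proves $\underline{\cd}(\Gamma) = \vcd(\Gamma)$ via the criterion of \cite{Aramayona} (Lemma~\ref{Lemme clé} / Lemma~\ref{Lemme secours}), reducing the theorem to the purely metric inequality $\dim S^\alpha < \vcd(\Gamma)$ for all non-central finite-order $\alpha \in \Gamma$. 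That inequality is then verified by an extensive case analysis over the classification of simple Lie algebras, their outer automorphisms, and Berger's classification of symmetric subalgebras, followed by a separate argument using the $\mathbb{Q}$-rank bound of Proposition~\ref{Prop rang rationnel} in the irreducible semisimple case. None of that structure appears in your proposal, so as written it does not constitute a proof of the theorem.
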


Recall that a symmetric space of non-compact type without euclidean factors is of the form $G/K$ where $G$ is a semisimple Lie group, which can be assumed to be connected and centerfree, and $K \subset G$ is a maximal compact subgroup. Then $\Isom(S)=\Aut(\mathfrak{g})=\Aut(G)$ where $\mathfrak{g}$ is the Lie algebra of $G$, and note that this group is semisimple, linear and algebraic but may be not connected. In \cite{Aramayona} the authors prove Theorem \ref{Théorème principal} for lattices in classical simple Lie groups $G$. We will heavily rely on their results and techniques.

We discuss now some applications of Theorem \ref{Théorème principal}. First note that the symmetric space $S$ is a model for $\underline{E}\Gamma$. Theorem \ref{Théorème principal} yields then that:

\begin{cor}\label{Corollaire modèle cocompact}
If $S$ is a symmetric space of non-compact type and without euclidean factors, and if $\Gamma \subset \Isom(S)$ is a lattice, then $S$ is $\Gamma$-equivariantly homotopy equivalent to a proper cocompact $\Gamma$-CW complex of dimension $\vcd(\Gamma)$.
\end{cor}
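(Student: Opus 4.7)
The plan follows the author's remark preceding the statement: the corollary should drop out of Theorem~\ref{Théorème principal} once one knows that $S$ is itself a model for $\underline{E}\Gamma$.

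First, I would verify that $S$ is indeed a model for $\underline{E}\Gamma$. Since $\Gamma$ is a lattice in $\Isom(S)$, it acts properly on $S$, so all point-stabilizers are finite. For any finite subgroup $H \leqslant \Gamma$, the Cartan fixed-point theorem applied in the complete simply connected non-positively curved space $S$ ensures that $S^{H} \neq \emptyset$; as a totally geodesic submanifold of a CAT(0) space, $S^{H}$ is convex and therefore contractible. Hence $S$ is a model for $\underline{E}\Gamma$.

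Invoking Theorem~\ref{Théorème principal}, one has $\underline{\gd}(\Gamma) = \vcd(\Gamma)$, so there exists a model $X$ for $\underline{E}\Gamma$ with $\dim X = \vcd(\Gamma)$. By the uniqueness of $\underline{E}\Gamma$ up to $\Gamma$-equivariant homotopy equivalence recalled in the introduction, one gets $S \simeq_{\Gamma} X$. This delivers the equivariant homotopy equivalence and the correct dimension.

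The main obstacle is the cocompactness of $X$, which is not part of the formal statement of Theorem~\ref{Théorème principal}. For a uniform lattice there is nothing to do: $\vcd(\Gamma) = \dim S$, and $X = S$ itself is a cocompact model. For a non-uniform lattice, I would invoke the Borel--Serre bordification $\bar{S}^{BS}$, which is a cocompact proper $\Gamma$-CW complex and a model for $\underline{E}\Gamma$ of dimension $\dim S$; the remaining task is to produce an equivariant deformation retract of $\bar{S}^{BS}$ onto a cocompact subcomplex of dimension $\vcd(\Gamma)$. This is precisely the subtle point at which the abstract dimension equality given by Theorem~\ref{Théorème principal} is insufficient: one must use the constructive content of its proof, which is expected to build a cocompact model of the optimal dimension directly, so that the cocompactness clause of the corollary is built into the construction rather than deduced formally from the theorem as a black box.
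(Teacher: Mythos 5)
Your outline follows the same overall skeleton as the paper's proof: check that $S$ is a model for $\underline{E}\Gamma$, invoke Theorem~\ref{Théorème principal} to bring in a model of dimension $\vcd(\Gamma)$, use the Borel--Serre bordification to get a cocompact model, and then reconcile the two. That part is correct and matches the paper.

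However, there is a genuine gap. You explicitly decline to close the cocompactness step, writing that the abstract equality $\underline{\gd}(\Gamma)=\vcd(\Gamma)$ is ``insufficient'' and that one must reopen the proof of Theorem~\ref{Théorème principal} for its ``constructive content.'' That diagnosis is wrong: no reopening is needed. The missing ingredient is a standard black-box result on cocompact models which the paper invokes (via Corollary~1.1 of \cite{Aramayona}, which in turn rests on a cell-trading argument of the type in L\"uck's work): if $\Gamma$ admits \emph{some} cocompact model for $\underline{E}\Gamma$ and $\underline{\gd}(\Gamma)=d\geqslant 3$, then $\Gamma$ admits a \emph{cocompact} model of dimension $d$. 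One applies this with the Borel--Serre bordification as the given cocompact model and $d=\vcd(\Gamma)$; no deformation retraction onto a subcomplex is required, and no inspection of the proof of the Main Theorem is needed. A secondary omission is that the paper treats the real-rank-one case separately (citing \cite[Prop.~2.6]{Aramayona}), which matters because Margulis arithmeticity, and hence the Borel--Serre discussion, does not apply to non-arithmetic rank-one lattices; your argument as written relies on the Borel--Serre bordification in the non-uniform case without addressing that these lattices need not be arithmetic.
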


We stress again that in the setting of Theorem \ref{Théorème principal} we are considering the full group of isometries of $S$. This has the consequence that we are able to deduce that there is equality between the virtual cohomological dimension and the proper geometric dimension not only for lattices in $\Isom(S)$, but also for groups abstractly commensurable to them. Here, two groups $\Gamma_1$ and $\Gamma_2$ are said \textit{abstractly commensurable} if for $i=1,2$, there exists a subgroup $\tilde{\Gamma_i}$ of finite index in $\Gamma_i$, such that $\tilde{\Gamma_1}$ is isomorphic to $\tilde{\Gamma_2}$. Then we obtain from Theorem \ref{Théorème principal} that:

\begin{cor}\label{Corollaire commensurable}
If a group $\Gamma$ is abstractly commensurable to a lattice in the group of isometries of a symmetric space of non-compact type without euclidean factors, then $\underline{\gd}(\Gamma)=\vcd(\Gamma)$.
\end{cor}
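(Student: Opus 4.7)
The plan is to reduce to Theorem \ref{Théorème principal} via a common finite-index subgroup. Given $\Gamma$ abstractly commensurable to a lattice $\Lambda \subset \Isom(S)$, fix finite-index subgroups $\tilde{\Gamma} \leq \Gamma$ and $\tilde{\Lambda} \leq \Lambda$ with an isomorphism $\tilde{\Gamma} \cong \tilde{\Lambda}$. Since a finite-index subgroup of a lattice remains a lattice, $\tilde{\Lambda} \subset \Isom(S)$ is still a lattice, so Theorem \ref{Théorème principal} applies and gives $\underline{\gd}(\tilde{\Lambda}) = \vcd(\tilde{\Lambda})$. Transporting along the isomorphism yields $\underline{\gd}(\tilde{\Gamma}) = \vcd(\tilde{\Gamma})$.

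The remaining task is to promote this equality from $\tilde{\Gamma}$ to $\Gamma$. Serre's invariance gives $\vcd(\Gamma) = \vcd(\tilde{\Gamma})$ and $\vcd(\Gamma) \leq \underline{\gd}(\Gamma)$ is automatic, so only $\underline{\gd}(\Gamma) \leq \vcd(\Gamma)$ is substantive. I would tackle this by combining Lück--Meintrup's comparison $\underline{\gd}(\Gamma) \leq \max\{3, \underline{\cd}(\Gamma)\}$ (where $\underline{\cd}$ denotes the Bredon cohomological dimension) with a bound $\underline{\cd}(\Gamma) \leq \vcd(\Gamma)$. The low-dimensional exceptions $\vcd(\Gamma) \in \{1,2\}$ would be handled separately by producing a Bass--Serre tree (Karrass--Pietrowski--Solitar in the virtually free case) or a proper cocompact action on the hyperbolic plane in the virtually Fuchsian case.

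The step I expect to be the main obstacle is the bound $\underline{\cd}(\Gamma) \leq \vcd(\Gamma)$: this fails for arbitrary virtually torsion-free groups (the Leary--Nucinkis construction cited in the introduction), so the specific commensurability with a lattice must genuinely be used. The naive induction $\Gamma \times_{\tilde{\Gamma}} X$ of an optimal model $X$ for $\underline{E}\tilde{\Gamma}$ coming from Theorem \ref{Théorème principal} does not work, because fixed-point sets of finite subgroups of $\Gamma$ acquire extra connected components coming from the distinct $\tilde{\Gamma}$-cosets. A workable route is to exploit rigidity: outer automorphisms of $\tilde{\Gamma}\cong\tilde{\Lambda}$ induced by conjugation in $\Gamma$ are realized by isometries of $S$, which extends the $\tilde{\Gamma}$-action on $S$ to a $\Gamma$-action and permits construction of a cocompact model for $\underline{E}\Gamma$ of dimension $\vcd(\Gamma)$ paralleling the proof of Theorem \ref{Théorème principal} itself.
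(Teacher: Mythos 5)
Your proposal identifies the crux exactly as the paper does: use Mostow rigidity on the conjugation action of $\Gamma$ on a finite-index normal lattice subgroup $\tilde{\Gamma}$ to produce a homomorphism $\Gamma \to \Aut(G_{ad}) = \Isom(S)$, and treat $\PSL(2,\mathbb{R})$ (where Mostow rigidity fails) separately by Bass--Serre trees and Fuchsian-group arguments, which is exactly the paper's contingency. Where you leave the argument vague is also where the paper takes a shortcut you should adopt: the kernel $N$ of $\Gamma \to \Aut(G_{ad})$ is finite because it meets the centerfree finite-index subgroup $\tilde{\Gamma}$ trivially (a lattice is Zariski-dense by Borel density, hence centerfree when $G_{ad}$ is), so the image $\Gamma/N$ is a lattice in $\Isom(S)$ to which Theorem~\ref{Théorème principal} applies directly as a black box, and Lemma~\ref{Finite normal subgroups} then transports both $\underline{\gd}$ and $\vcd$ across the quotient by $N$. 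There is no need to re-run the proof of the Main Theorem for an extended $\Gamma$-action on $S$, nor to route through the intermediate bound $\underline{\cd}(\Gamma) \leqslant \vcd(\Gamma)$ that you outline at the start, which you correctly diagnose as unavailable in this generality.
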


\begin{rmq}
Note that in general the equality between the proper geometric dimension and the virtual cohomological dimension behaves badly under commensuration. For instance, the fact that there exist virtully torsion-free groups $\Gamma$ with $\vcd(\Gamma) \geqslant 3$ and such that $\vcd(\Gamma) < \underline{\gd}(\Gamma)$ proves that if $\Gamma'$ is a torsion-free subgroup of $\Gamma$ of finite index, then $\vcd(\Gamma')=\cd(\Gamma')=\gd(\Gamma')=\underline{\gd}(\Gamma')$, whereas $\Gamma$ is commensurable to $\Gamma'$ and $\vcd(\Gamma) < \underline{\gd}(\Gamma)$. In fact, we have concrete exemples of groups for which Corollary \ref{Corollaire commensurable} fails among familiar classes of groups. For instance, in \cite{Degrijse} the authors prove that if $\Gamma$ is a finitely generated Coxeter group then $\vcd(\Gamma)=\underline{\gd}(\Gamma)$ and in \cite{Leary} the authors construct finite extensions of certain right-angled Coxeter groups such that $\vcd(\Gamma) < \underline{\gd}(\Gamma)$.
\end{rmq}

Returning to the applications of Theorem \ref{Théorème principal}, we obtain from Corollary \ref{Corollaire commensurable} that lattices in $\Isom(S)$ are dimension rigid in the sense of \cite{Degrijse-Souto}: we say that a virtually torsion-free group $\Gamma$ is \textit{dimension rigid} if one has $\underline{\gd}(\tilde{\Gamma})=\vcd(\tilde{\Gamma})$ for every group $\tilde{\Gamma}$ which contains $\Gamma$ as a finite index normal subgroup.

Dimension rigidity has a strong impact on the behaviour of the proper geometric dimension under group extensions, and we obtain from Corollary \ref{Corollaire commensurable} and \cite[Cor. 2.3]{Degrijse-Petrosyan} that:

\begin{cor}
If $\Gamma$ is a lattice in the group of isometries of a symmetric space of non-compact type without euclidean factors and 
\[ 1 \rightarrow \Gamma \rightarrow G \rightarrow Q \rightarrow 1 \]
is a short exact sequence, then $\underline{\gd}(G) \leqslant \underline{\gd}(\Gamma) + \underline{\gd}(Q)$.
\qed
\end{cor}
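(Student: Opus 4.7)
The plan is to derive this corollary as a direct consequence of the dimension rigidity of $\Gamma$ combined with the general extension bound \cite[Cor. 2.3]{Degrijse-Petrosyan}, along exactly the lines suggested by the discussion preceding the statement.

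First I would verify that the lattice $\Gamma$ is dimension rigid in the sense of \cite{Degrijse-Souto}, namely that $\underline{\gd}(\tilde{\Gamma}) = \vcd(\tilde{\Gamma})$ for every group $\tilde{\Gamma}$ containing $\Gamma$ as a finite index normal subgroup. For any such $\tilde{\Gamma}$, the subgroup $\Gamma$ has finite index in $\tilde{\Gamma}$ and of course in $\Gamma$ itself, so $\tilde{\Gamma}$ and $\Gamma$ are abstractly commensurable. Applying Corollary \ref{Corollaire commensurable} to $\tilde{\Gamma}$ then yields the desired equality $\underline{\gd}(\tilde{\Gamma})=\vcd(\tilde{\Gamma})$, which is exactly dimension rigidity of $\Gamma$.

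Once dimension rigidity is established, the inequality $\underline{\gd}(G) \leqslant \underline{\gd}(\Gamma) + \underline{\gd}(Q)$ is exactly the conclusion of \cite[Cor. 2.3]{Degrijse-Petrosyan}, applied to the short exact sequence in the hypothesis. There is no serious obstacle: the entire content of the corollary is packaged into Corollary \ref{Corollaire commensurable} together with the black-box extension bound of Degrijse--Petrosyan. The only point worth double-checking is that the notion of dimension rigidity used in \cite{Degrijse-Petrosyan} is indeed the one from \cite{Degrijse-Souto} recalled in this paper, which is the case.
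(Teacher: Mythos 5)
Your argument is exactly the paper's intended proof: the corollary is presented without proof because, as the surrounding text explains, it follows by first noting that Corollary \ref{Corollaire commensurable} gives dimension rigidity of $\Gamma$ (any $\tilde{\Gamma}$ containing $\Gamma$ as a finite index normal subgroup is abstractly commensurable to $\Gamma$), and then invoking \cite[Cor.\ 2.3]{Degrijse-Petrosyan}. Your proposal reproduces this reasoning accurately and the check on the definition of dimension rigidity is the right thing to flag.
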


We sketch now the strategy of the proof of Theorem \ref{Théorème principal}. To begin with, note that while symmetric spaces, both Riemannian and non-Riemannian, will play a key role in our considerations, most of the time we will be working in the ambient Lie group. In fact it will be convenient to reformulate Theorem \ref{Théorème principal} as follows:

\begin{mth}
Let $\mathfrak{g}$ be a semisimple Lie algebra. Then 
\[ \underline{\gd}(\Gamma)=\vcd(\Gamma) \]
for every lattice $\Gamma \subset \Aut(\mathfrak{g})$.
\end{mth}

The key ingredient in the proof of the Main Theorem, and hence of Theorem \ref{Théorème principal}, is a result of Lück and Meintrup \cite{Luck-Meintrup}, which basically asserts that the proper geometric dimension $\underline{\gd}(\Gamma)$ equals the Bredon cohomological dimension $\underline{\cd}(\Gamma)$ - see Theorem \ref{Théorème Luck-Meintrup} for a precise statement. In the light of this theorem it suffices to prove that the two cohomological notions of dimension $\vcd(\Gamma)$ and $\underline{\cd}(\Gamma)$ coincide. In \cite{Aramayona} the authors noted that to prove the equality $\vcd(\Gamma)=\underline{\cd}(\Gamma)$ it suffices to ensure that the fixed point sets $S^{\alpha}$ of finite order elements $\alpha \in \Gamma$ are of small dimension - see Section 2.8 for details. Still in \cite{Aramayona} the authors checked that this was the case for lattices contained in the classical simple Lie groups. We will use a similar strategy to prove the Main Theorem for lattices in groups of automorphisms of all simple Lie algebras. Recall that any non-compact finite dimensional simple Lie algebra over $\mathbb{R}$ is either isomorphic to one of the classical types or to one of the exceptional ones. The classical Lie algebras are the complex ones
\[ \mathfrak{sl}(n,\mathbb{C}), \mathfrak{so}(n,\mathbb{C}), \mathfrak{sp}(2n,\mathbb{C}) \]
and their real forms 
\[ \mathfrak{sl}(n,\mathbb{R}), \mathfrak{sl}(n,\mathbb{H}), \mathfrak{so}(p,q), \mathfrak{su}(p,q), \mathfrak{sp}(p,q), \mathfrak{sp}(2n,\mathbb{R}), \mathfrak{so}^*(2n). \]
Similarly, the exceptional Lie algebras are the five complex ones
\[ \mathfrak{g}_2^{\mathbb{C}}, \mathfrak{f}_4^{\mathbb{C}}, \mathfrak{e}_6^{\mathbb{C}}, \mathfrak{e}_7^{\mathbb{C}}, \mathfrak{e}_8^{\mathbb{C}} \]
and their twelve real forms
\[ \mathfrak{g}_{2(2)}, \mathfrak{f}_{4(4)}, \mathfrak{f}_{4(-20)}, \mathfrak{e}_{6(6)}, \mathfrak{e}_{6(2)}, \mathfrak{e}_{6(-14)}, \mathfrak{e}_{6(-26)}, \]
\[ \mathfrak{e}_{7(7)}, \mathfrak{e}_{7(-5)}, \mathfrak{e}_{7(-25)}, \mathfrak{e}_{8(8)}, \mathfrak{e}_{8(-24)}. \]
Here the number in brackets is the difference between the dimension of the adjoint group and twice the dimension of a maximal compact subgroup (which equals 0 for a complex Lie group).

We illustrate now the basic steps of the proof of the Main Theorem in the example of $\mathfrak{g}=\mathfrak{sl}(n,\mathbb{C})$. Suppose that $\Gamma \subset \Aut(\mathfrak{g})$ is a lattice, and consider the symmetric space $S=\PSL(n,\mathbb{C})/\PSU_n$. To prove that $\underline{\gd}(\Gamma)=\vcd(\Gamma)$, it will suffice to establish that
\begin{equation}
\label{Relation sln}
 \dim S^{\alpha} < \dim S - \rk_{\mathbb{R}}(\PSL(n,\mathbb{C}))=(n^2-1)-(n-1)
\end{equation}
for every $\alpha \in \Gamma$ of finite order and non central (see Lemma \ref{Lemme clé}).  First note that $\alpha$ is the composition of an inner automorphism and an outer automorphism. Since every non-trivial element in $\Out(\mathfrak{g})$ has order 2, it follows that $\alpha^2$ is an inner automorphism. If it is non trivial then we use the results of Section 6.1 in \cite{Aramayona}. In general, if $\alpha=\Ad(A)$ is a non-trivial inner automorphism of $\mathfrak{sl}(n,\mathbb{C})$, we get from \cite{Aramayona} that \eqref{Relation sln} holds.

We are reduced to the case where $\alpha^2$ is trivial, meaning that $\alpha$ is of order 2. Then the automorphism $\alpha \in \Aut(\mathfrak{g})$ is induced by an automorphism of the adjoint group $G_{ad}=\PSL(n,\mathbb{C})$ which is still denoted $\alpha$ and is also an involution. The fixed point set $S^{\alpha}$ is the Riemannian symmetric space associated to $G_{ad}^{\alpha}$, where $G_{ad}^{\alpha}$ is the set of fixed points of $\alpha$. Now, notice that the quotient $G_{ad}/G_{ad}^{\alpha}$ is a (non-Riemannian) symmetric space. The symmetric spaces associated to simple groups have been classified by Berger in \cite{Berger}. In the case of $G_{ad}=\PSL(n,\mathbb{C})$, we obtain from this classification that the Lie algebra of $G_{ad}^{\alpha}$ is either compact or isomorphic to $\mathfrak{so}(n,\mathbb{C})$, $\mathfrak{s(gl(}k,\mathbb{C}) \oplus \mathfrak{gl}(n-k,\mathbb{C}))$, $\mathfrak{sp}(n,\mathbb{C})$, $\mathfrak{sl}(n,\mathbb{R})$,  $\mathfrak{su}(p,n-p)$ or $\mathfrak{sl}(\frac{n}{2},\mathbb{H})$, where $\mathfrak{sp}(n,\mathbb{C})$ and $\mathfrak{sl}(\frac{n}{2},\mathbb{H})$ only appear if $n$ is even. Armed with this information, we check \eqref{Relation sln} for every involution $\alpha$, which leads to the Main Theorem for $\mathfrak{g}=\mathfrak{sl(n,\mathbb{C})}$. The argument we just sketched will be applied in Section 3 to all complex simple Lie algebras and in Section 4 to the real ones. Since the arguments are similar, and since the complex case is somewhat easier, we advise the reader to skip Section 4 in a first reading.

Having dealt with the simple Lie algebras, we treat in section 5 the semisimple case. The method for the simple algebras will not work at first sight, but the proof will eventually by simpler. The idea is to restrict to irreducible lattices, i.e. those who cannot be decomposed into a product. Then we will show that the rational rank of an irreducible lattice is lower than the real rank of any factor of the adjoint group, meaning that we get a much improved bound than in \eqref{Relation sln}. This fact will lead rapidly to the Main Theorem.

\vspace{0.5 cm}

Finally note that in the proof of the Main Theorem, we do not construct a concrete model for $\underline{E}\Gamma$ of dimension $\vcd(\Gamma)$, we just prove its existence. It is however worth mentioning that in a few cases such models are known. For instance if $\Gamma=\SL(n,\mathbb{Z})$, the symmetric space $S=\SL(n,\mathbb{R})/\SO_n$ admits a $\Gamma$-equivariant deformation retract of dimension $\vcd(\Gamma)$ called the "well-rounded retract" (see \cite{Ash}, \cite{Pettet-Souto} and \cite{Souto-Pettet}). It will be interesting to do the same for groups such as $\Sp(2n,\mathbb{Z})$.

\vspace{0.5cm}

\noindent \textbf{Acknowledgements.} The author thanks Dave Witte Morris for his help, Dieter Degrijse for interesting discussions and Juan Souto for his useful advice and instructive discussions.

\section{Preliminaries}

In this section we recall some basic facts and definitions about algebraic groups, Lie groups and Lie algebras, symmetric spaces, lattices and arithmetic groups, virtual cohomological dimension and Bredon cohomology.

\subsection{Algebraic groups and Lie groups}

An \textit{algebraic group} is a subgroup $\mathbb{G}$ of $\SL(N,\mathbb{C})$ determined by a collection of polynomials. It is \textit{defined over} a subfield $k$ of $\mathbb{C}$ if those polynomials can be chosen to have coefficients in $k$. The Galois criterion (see \cite[Prop. 14.2 p.30]{Borel}) says that $\mathbb{G}$ is defined over $k$ if and only if $\mathbb{G}$ is stable under the Galois group $\Gal(\mathbb{C}/k)$. If $\mathbb{G}$ is an algebraic group and $R \subset \mathbb{C}$ is a ring we note $\mathbb{G}_R$ the set of elements of $\mathbb{G}$ with entries in $R$. If $\mathbb{G}$ is an algebraic group defined over $\mathbb{R}$, it is well-known that the groups $\mathbb{G}_{\mathbb{C}}$ and $\mathbb{G}_{\mathbb{R}}$ are Lie groups with finitely many connected components. In fact, $\mathbb{G}$ is Zariski connected if and only if $\mathbb{G}_{\mathbb{C}}$ is a connected Lie group, whereas $\mathbb{G}_{\mathbb{R}}$ may not be connected in this case. A non-abelian algebraic group (or Lie group) is said \textit{simple} if every connected normal subgroup is trivial, and \textit{semisimple} if every connected normal abelian subgroup is trivial. Note that if $\mathbb{G}$ is a semisimple algebraic group defined over $k=\mathbb{R}$ or $\mathbb{C}$ then $\mathbb{G}_k$ is a semisimple Lie group. Any connected semisimple complex linear Lie group is algebraic and any connected semisimple real linear Lie group is the identity component of the group of real points of an algebraic group defined over $\mathbb{R}$. Recall that two Lie groups $G_1$ and $G_2$ are \textit{isogenous} if they are locally isomorphic, meaning that there exist finite normal subgroups $N_1 \subset G_1^0$ and $N_2 \subset G_2^0$ of the identity components of $G_1$ and $G_2$ such that $G_1^0/N_1$ is isomorphic to $G_2^0/N_2$. A semisimple linear Lie group is isogenous to a product of simple Lie groups. 

The center $Z(\mathbb{G})$ of a semisimple algebraic group $\mathbb{G}$ is finite (it is also the case for semisimple linear Lie groups but not for semisimple Lie groups in general) and the quotient $\mathbb{G}/Z(\mathbb{G})$ is again a semisimple algebraic group (see \cite[Thm. 6.8 p.98]{Borel}). Moreover, if $\mathbb{G}$ is defined over $k$ then so is $\mathbb{G}/Z(\mathbb{G})$. 

A connected algebraic group $\mathbb{T}\subset \SL(N,\mathbb{C})$ is a \textit{torus} if it is diagonalizable, meaning there exists $A \in \SL(N, \mathbb{C})$ such that for every $B \in \mathbb{T}$, $ABA^{-1}$ is diagonal. A torus is in particular abelian and isomorphic, as an algebraic group, to a product $\mathbb{C}^*\times \dots \times \mathbb{C}^*$. If $\mathbb{T}$ is defined over $k$, it is said to be \textit{$k$-split} if the conjugating element $A$ can be chosen in $\SL(N,k)$. A torus in an algebraic group $\mathbb{G}$ is a subgroup that is a torus. It is said to be \textit{maximal} if it is not strictly contained in any other torus. An important fact is that any two maximal tori in $\mathbb{G}$ are conjugate in $\mathbb{G}$, and that if $\mathbb{G}$ is defined over $k$, then any two maximal $k$-split tori are conjugate by an element in $\mathbb{G}_k$. The \textit{$k$-rank} of $\mathbb{G}$ (or of $\mathbb{G}_k$), denoted by $\rk_k\mathbb{G}$ (or $\rk_k\mathbb{G}_k$), is the dimension of any maximal $k$-split torus in $\mathbb{G}$, and the \textit{rank} of $\mathbb{G}$ is just the $\mathbb{C}$-rank. 

We refer to \cite{Borel}, \cite{Knapp} and \cite{Onishchik-Vinberg} for basic facts about algebraic groups and Lie groups.

\subsection{Lie algebras and their automorphisms}

Recall that the Lie algebra $\mathfrak{g}$ of a Lie group $G$ is the set of left-invariant vector fields.
A \textit{subalgebra} of $\mathfrak{g}$ is a subspace closed under Lie bracket. An \textit{ideal} is a subalgebra $I$ such that $[\mathfrak{g},I] \subset I$. The Lie algebra $\mathfrak{g}$ is \textit{simple} if it is not abelian and has no non-trivial ideals, and \textit{semisimple} if it has no non-zero abelian ideals. A Lie group is simple (resp. semisimple) if and only if its Lie algebra is simple (resp. semisimple). A semisimple Lie algebra is isomorphic to a finite direct sum of simple ones.

By Lie's third theorem, if $\mathfrak{g}$ is a finite dimensional real Lie algebra (which will be always the case here), there exists a connected Lie group, unique up to covering, whose Lie algebra is $\mathfrak{g}$. This means that their exists a unique simply connected Lie group $G$  associated to $\mathfrak{g}$, and every other connected Lie group whose Lie algebra is $\mathfrak{g}$ is a quotient of $G$ by a subgroup contained in the center. In particular, $G_{ad}=G/Z(G)$ is the unique connected centerfree Lie group associated to $\mathfrak{g}$. The group $G_{ad}$ is called the \textit{adjoint group} of $\mathfrak{g}$. The adjoint group is a linear algebraic group, whereas its universal cover may be not linear (see for instance the universal cover of $\PSL(2,\mathbb{R})$). It follows that the classification of simple Lie algebras is in correspondance with that of simple Lie groups. A Lie algebra is said \textit{compact} if the adjoint group is.

An automorphism of a Lie algebra $\mathfrak{g}$ is a bijective linear endomorphism which preserves the Lie bracket. The group of automorphisms of $\mathfrak{g}$ is denoted $\Aut(\mathfrak{g})$, it is linear and algebraic but not connected in general. If $G$ is a Lie group associated to $\mathfrak{g}$, then the differential of a Lie group automorphism of $G$ is an automorphism of $\mathfrak{g}$. Conversely, if $G$ is either simply connected or connected and centerfree, any automorphism of $\mathfrak{g}$ comes from an automorphism of $G$. In this case, we will often identify these two automorphisms and denote them by the same letter.  An \textit{inner automorphism} is the derivative of the conjugation in $G$ by an element $A  \in G$ - we denote it $\Ad(A)$. The group $\Inn(\mathfrak{g})$ of inner automorphisms is a normal subgroup of $\Aut(\mathfrak{g})$. It is also the identity component of $\Aut(\mathfrak{g})$ and is isomorphic to the adjoint group $G_{ad}$. If $\mathfrak{g}$ is semisimple, the subgroup $\Inn(\mathfrak{g})$ is of finite index in $\Aut(\mathfrak{g})$ and the quotient $\Aut(\mathfrak{g})/\Inn(\mathfrak{g})$ is the (finite) group of \textit{outer automorphisms} $\Out(\mathfrak{g})$. Moreover if $\mathfrak{g}$ is simple, $\Out(\mathfrak{g})$ can be seen as a subgroup of $\Aut(\mathfrak{g})$ and $\Aut(\mathfrak{g})$ is the semidirect product of $\Out(\mathfrak{g})$ and $\Inn(\mathfrak{g})$, that is
$\Aut(\mathfrak{g})=\Inn(\mathfrak{g}) \rtimes \Out(\mathfrak{g})$ (see \cite{Gundogane}).

Note that even if $\mathfrak{g}$ is complex, we let $\Aut(\mathfrak{g})$ be the group of real automorphisms. If $\mathfrak{g}$ is complex and simple then $\Aut(\mathfrak{g})$ contains the complex automorphism group $\Aut_{\mathbb{C}}(\mathfrak{g})$ as a subgroup of index 2, the quotient being generated by complex conjugation (see \cite[Prop. 4.1]{Djokovic}).

Recall that if $\mathfrak{g}$ is a complex Lie algebra, a \textit{real form} of $\mathfrak{g}$ is a real Lie algebra whose complexification is $\mathfrak{g}$. Any real form is the group of fixed points by a \textit{conjugation} of $\mathfrak{g}$, meaning an involutive real automorphism which is antilinear over $\mathbb{C}$.

We refer to \cite{Onishchik-Vinberg}, \cite{Knapp},  \cite{Gundogan} and \cite{Djokovic} for other facts about Lie algebras and their automorphisms.

\subsection{Simple Lie groups, simple Lie algebras and their outer automorphisms}

As mentioned in the previous section, the classification of simple Lie groups (up to isogeny) and of simple Lie algebras are in correspondance. Both are due to by Cartan. We will now see that of simple Lie groups. Every linear simple Lie group is isogenous to either a classical group or to one of the finitely many exceptional groups. We denote the transpose of a matrix $A$ by $A^t$ and its conjugate transpose by $A^*$ and we consider the particular matrices

\[ J_n=\begin{pmatrix}
 & \Id_n \\
-\Id_n  \\
\end{pmatrix}, Q_{p,q}=\begin{pmatrix}
-\Id_p &  \\
 & \Id_q \\
\end{pmatrix}. \]

\noindent The non-compact classical simple Lie groups are the groups in the following list

\vspace{0.2cm}

 $\SL(n,\mathbb{C})=\{A \in \GL(n,\mathbb{C})|\det A=1\} \hfill n \geqslant 2$
 
 $\SO(n,\mathbb{C})=\{A \in \SL(n,\mathbb{C})|A^tA=\Id\} \hfill n\geqslant 3, n\neq 4$
 
 $\Sp(2n,\mathbb{C})=\{A \in \SL(2n,\mathbb{C})|A^tJ_nA=J_n\} \hfill n\geqslant 1$
 
 $\SL(n,\mathbb{R})=\{A \in \GL(n,\mathbb{R})|\det A=1 \} \hfill n\geqslant 2$
 
 $\SL(n,\mathbb{H})=\{A \in \GL(n,\mathbb{H})|\det A=1 \} \hfill n\geqslant 2$

 $\SO(p,q)=\{A \in \SL(p+q,\mathbb{R})|A^tQ_{p,q}A=Q_{p,q}\} \hfill 1\leqslant p \leqslant q, p+q \geqslant 3$
 
 $\SU(p,q)=\{A \in \SL(p+q,\mathbb{C})|A^*Q_{p,q}A=Q_{p,q} \} \hfill 1\leqslant p \leqslant q, p+q \geqslant 3$
 
  $\Sp(p,q)=\{A \in \GL(p+q,\mathbb{H})|A^*Q_{p,q}A=Q_{p,q} \} \hfill 1\leqslant p \leqslant q, p+q \geqslant 3$
 
  $\Sp(2n,\mathbb{R})=\{A \in \SL(2n,\mathbb{R})|A^tJ_nA=J_n \} \hfill n\geqslant 1$
  
  $\SO^*(2n)=\{A \in \SU(n,n)|A^tQ_{n,n}J_nA=Q_{n,n}J_n\} \hfill n\geqslant 2$
  
  \vspace{0.2cm}
  
\noindent Similarly we give the list of the compact ones
 
 \vspace{0.2cm}
  
  $\SO_n=\{A \in \SL(n,\mathbb{R})|A^tA=\Id\} \hfill \mathrm{O}_n=\{A \in \GL(n,\mathbb{R})|A^tA=\Id\}$
  
  $\SU_n=\{A \in \SL(n,\mathbb{C})|A^*A=\Id\} \hfill \U_n=\{A \in \GL(n,\mathbb{C})|A^*A=\Id\}$
  
 $\Sp_n=\{A \in \GL(n,\mathbb{H})|A^*A=\Id\}$
 
 \vspace{0.2cm}
 
\noindent The compact exceptional Lie groups are
\[ G_2, F_4, E_6, E_7, E_8\]
and the non-compact ones are the complex ones (which are the complexifications of the previous compact groups)
\[ G_2^{\mathbb{C}}, F_4^{\mathbb{C}}, E_6^{\mathbb{C}}, E_7^{\mathbb{C}}, E_8^{\mathbb{C}} \]
and their real forms
\[ G_{2(2)}, F_{4(4)}, F_{4(-20)}, F_{4(4)}, E_{6(6)}, E_{6(2)}, E_{6(-14)}, E_{6(-26)}, \]
\[ E_{7(7)}, E_{7(-5)}, E_{7(-25)}, E_{8(8)}, E_{8(-24)}. \]

We refer to \cite{Yokota} for definitions and complete descriptions of the simply connected versions of the exceptional Lie groups. Note that in this paper we will always consider the centerless versions with the same notations.

As usual, the simple Lie algebra associated to a simple Lie group will be denoted by gothic caracters, for instance $\mathfrak{sl}(n,\mathbb{R})$ is the Lie algebra of $\SL(n,\mathbb{R})$. Note that the adjoint group of $\mathfrak{sl}(n,\mathbb{R})$ is $\PSL(n,\mathbb{R})$.
The classification of simple Lie algebras runs in parallel to that of simple Lie groups. The following table summarizes the structure of the outer automorphisms groups of simple Lie algebras (see \cite{Gundogan} section 3.2). We denote by $S_n$ the symmetric group and $D_{2n}$ the dihedral group.

\newpage
\begin{center}
\begin{tabular}{|*{2}{c|}}
\hline
$\mathfrak{g}$ & $\Out(\mathfrak{g})$ \\
\hline
$\mathfrak{sl}(n,\mathbb{C}), ~ n\geqslant 3$ & $\mathbb{Z}_2 \times \mathbb{Z}_2$ \\
\hline
$\mathfrak{so}(8,\mathbb{C})$ & $\mathcal{S}_3 \times \mathbb{Z}_2$ \\
\hline
$\mathfrak{so}(2n,\mathbb{C}),~ n \geqslant 5$ & $\mathbb{Z}_2 \times \mathbb{Z}_2$ \\
\hline
$\mathfrak{e}_6^{\mathbb{C}}$ & $\mathbb{Z}_2 \times \mathbb{Z}_2$ \\
\hline
all others complex Lie algebras & $\mathbb{Z}_2$ \\
\hline
$\mathfrak{sl}(2,\mathbb{R})$ & $\mathbb{Z}_2$ \\
\hline
$\mathfrak{sl}(n,\mathbb{R}),~n \geqslant 3$ odd & $\mathbb{Z}_2$ \\
\hline
$\mathfrak{sl}(n,\mathbb{R}),~n \geqslant 4$ even & $\mathbb{Z}_2 \times \mathbb{Z}_2$ \\
\hline
$\mathfrak{su}(p,q),~p \neq q$ & $\mathbb{Z}_2$ \\
\hline
$\mathfrak{su}(p,p),~p \geqslant 2$ & $\mathbb{Z}_2 \times \mathbb{Z}_2$ \\
\hline
$\mathfrak{sl}(n,\mathbb{H})$ & $\mathbb{Z}_2$\\
\hline
$\mathfrak{so}(p,q),~ p+q$ odd & $\mathbb{Z}_2$ \\
\hline
$\mathfrak{so}(p,q)$, $p$ and $q$ odd, $p\neq q$ & $\mathbb{Z}_2$ \\
\hline
$\mathfrak{so}(p,q)$ $p$ and $q$ even, $p\neq q$ & $\mathbb{Z}_2 \times \mathbb{Z}_2$ \\
\hline
$\mathfrak{so}(p,p)$ $p \geqslant 5$ odd & $\mathbb{Z}_2 \times \mathbb{Z}_2$ \\
\hline
$\mathfrak{so}(p,p)$ $p \geqslant 6$ even & $\mathcal{D}_4$ \\
\hline
$\mathfrak{so}(4,4)$ & $\mathcal{S}_4$ \\
\hline
$\mathfrak{sp}(2n,\mathbb{R})$ & $\mathbb{Z}_2$ \\
\hline
$\mathfrak{sp}(p,p)$ & $\mathbb{Z}_2$ \\
\hline
$\mathfrak{so}^*(2n)$ & $\mathbb{Z}_2$ \\
\hline
$\mathfrak{e}_{6(j)}$, $j=6,2,-14,-26$ & $\mathbb{Z}_2$ \\
\hline
$\mathfrak{e}_{7(j)}$, $j=7,-25$ & $\mathbb{Z}_2$ \\
\hline
all others real Lie algebras & 1 \\
\hline
\end{tabular}
\captionof{table}{Outer automorphisms groups of simple Lie algebras}
\end{center}

Note that we have the isomorphisms: $\mathfrak{so}(3,\mathbb{C}) \cong \mathfrak{sl}(2,\mathbb{C})\cong \mathfrak{sp}(2,\mathbb{C})$, $\mathfrak{so}(5, \mathbb{C}) \cong \mathfrak{sp}(4,\mathbb{C})$, $\mathfrak{so}(6,\mathbb{C}) \cong \mathfrak{sl}(4,\mathbb{C})$ and the corresponding ones between their real forms, and that $\mathfrak{so}(4,\mathbb{C})$ is not simple but isomorphic to $\mathfrak{sl}(2,\mathbb{C}) \oplus \mathfrak{sl}(2,\mathbb{C})$. 

\subsection{Symmetric spaces}

 Let $G$ be a Lie group. A \textit{symmetric space} is a space of the form $G/G^{\rho}$ where $\rho$ is an involutive automorphism of $G$ and $G^{\rho}$ its fixed points set. It is said \textit{irreducible} if it can not be decomposed as a product. From an algebraic point of view, the irreducibility of $G/H$ implies that the Lie algebra $\mathfrak{h}$ of $H$ is a maximal subalgebra of the Lie algebra $\mathfrak{g}$ of $G$. Equivalently, the irreducibility of $G/H$ implies that the identity component of $H$ is a maximal connected Lie subgroup of the identity component of $G$. 

Another point of view on symmetric spaces is based on Lie algebras. If $G/G^{\rho}$ is a symmetric space and $\mathfrak{g}$ is the Lie algebra of $G$, the involutive automorphism $\rho$ induces an involutive automorphism of $\mathfrak{g}$ whose fixed point set is the Lie algebra $\mathfrak{h}$ of $H=G^{\rho}$. We can thus always associate to a symmetric space $G/H$ a linear space $\mathfrak{g}/\mathfrak{h}$, called a \textit{local symmetric space}. The Lie subalgebra $\mathfrak{h}$ is called the \textit{isotropy algebra} of $\mathfrak{g}/\mathfrak{h}$ (more generally we say that $\mathfrak{h}$ is an isotropy algebra if it is the fixed point set of an involutive automorphism) . Conversely, if $\mathfrak{g}$ is a Lie algebra, $G$ a simply connected or connected and centerless Lie group whose Lie algebra is $\mathfrak{g}$, and $\mathfrak{h} \subset \mathfrak{g}$ an isotropy algebra, the local symmetric space $\mathfrak{g}/\mathfrak{h}$ lifts to a symmetric space $G/H$, because $\Aut(\mathfrak{g})=\Aut(G)$. So the classification of symmetric spaces are in correspondance with those of local symmetric spaces, and has been done by Berger in \cite{Berger}.

Note that if $\mathfrak{g}$ is simple and complex, and if $\rho \in \Aut(\mathfrak{g})$ is an involution, then $\rho$ is either $\mathbb{C}$-linear and in this case $\mathfrak{h}=\mathfrak{g}^{\rho}$ is also complex, or $\rho$ is anti-linear (that means it is a conjugation) and $\mathfrak{h}=\mathfrak{g}^{\rho}$ is a real form of $\mathfrak{g}$. Note also that is $\mathfrak{g}$ is real and $\rho$ is an involution, then $\rho$ can be extended to a $\mathbb{C}$-linear involution $\rho^{\mathbb{C}}$ of the complexification $\mathfrak{g}^{\mathbb{C}}$ of $\mathfrak{g}$, and the isotropy algebra $(\mathfrak{g}^{\mathbb{C}})^{\rho^{\mathbb{C}}}$ is the complexification of $\mathfrak{h}=\mathfrak{g}^{\rho}$, that is $(\mathfrak{g}^{\mathbb{C}})^{\rho^{\mathbb{C}}}=\mathfrak{h}^{\mathbb{C}}$.

We give now the list of the non-compact isotropy algebras of the local symmetric spaces associated to $\mathfrak{sl}(n,\mathbb{C})$ and its real forms. 

\begin{center}
\begin{tabular}{|*{4}{c|}}
\hline
$\mathfrak{g}^{\mathbb{C}}=\mathfrak{sl}(n,\mathbb{C})$ & $\mathfrak{h}^{\mathbb{C}}=\mathfrak{so}(n,\mathbb{C})$ & $\mathfrak{h}^{\mathbb{C}}=\mathfrak{s(gl(}k,\mathbb{C}) \oplus \mathfrak{gl}(l,\mathbb{C}))$ & $\mathfrak{h}^{\mathbb{C}}=\mathfrak{sp}(n,\mathbb{C})$ \\
\hline
$\mathfrak{g}=\mathfrak{sl}(n,\mathbb{R})$ & $\mathfrak{h}=\mathfrak{so}(k,l)$
 \multirow{2}{*} & $\mathfrak{h}=\mathfrak{s(gl(}k,\mathbb{R}) \oplus \mathfrak{gl}(l,\mathbb{R}))$  & $\mathfrak{h}=\mathfrak{sp}(n,\mathbb{R})$ \\
& & $\mathfrak{h}=\mathfrak{gl}\left(\frac{n}{2},\mathbb{C}\right)$&
 \\
\hline
 $\mathfrak{g}=\mathfrak{su}(p,q)$ 
\multirow{2}{*}  & $\mathfrak{h}=\mathfrak{so}(p,q)$ & $\mathfrak{h}=\mathfrak{s(u(}k_p,k_q) \oplus \mathfrak{u}(l_p,l_q))$ & $\mathfrak{h}=\mathfrak{sp}\left(\frac{p}{2},\frac{q}{2}\right)$ \\
 & ($p=q$) $\mathfrak{h}=\mathfrak{so}^*(2p)$ & $\mathfrak{h}=\mathfrak{gl}(p,\mathbb{C})$ & $\mathfrak{h}=\mathfrak{sp}(2p,\mathbb{R})$
\\
\hline
$\mathfrak{g}=\mathfrak{sl}\left(\frac{n}{2},\mathbb{H}\right)$
\multirow{2}{*} & $\mathfrak{h}=\mathfrak{so}^*\left(2\left(\frac{n}{2}\right)\right)$ & $\mathfrak{h}=\mathfrak{s(gl(}\frac{k}{2},\mathbb{H}) \oplus \mathfrak{gl}(\frac{l}{2},\mathbb{H}))$ & $\mathfrak{h}=\mathfrak{sp}\left(\frac{k}{2},\frac{l}{2}\right)$ \\
& & $\mathfrak{gl}\left(\frac{n}{2},\mathbb{C}\right)$ & 
\\
\hline
\end{tabular}
\captionof{table}{Non-compact isotropy algebras of $\mathfrak{sl}(n,\mathbb{C})$ and its real forms}
\end{center}

Table 2 is organized as follows: in the first line we give the complex isotropy algebras $\mathfrak{h}^{\mathbb{C}}$ of $\mathfrak{sl}(n,\mathbb{C})$ (fixed by a complex involution). Each column consists of real forms of the complex algebra in the first entry. The local symmetric spaces associated to $\mathfrak{sl}(n,\mathbb{C})$ are then those of the form $\mathfrak{sl}(n,\mathbb{C})/\mathfrak{h}^{\mathbb{C}}$, for instance $\mathfrak{sl}(n,\mathbb{C})/\mathfrak{so}(n,\mathbb{C})$, or of the form $\mathfrak{sl}(n,\mathbb{C})/\mathfrak{g}$, for instance $\mathfrak{sl}(n,\mathbb{C})/\mathfrak{sl}(n,\mathbb{R})$. The ones associated to a real form $\mathfrak{g}$ are of the form $\mathfrak{g}/\mathfrak{h}$, for instance $\mathfrak{sl}(n,\mathbb{R})/\mathfrak{so}(k,l)$ with $k+l=n$. 

The following tables summarize the classification for other simple Lie algebras. They are organized in a similar way.

\begin{center}
\begin{tabular}{|*{3}{c|}}
\hline
$\mathfrak{g}^{\mathbb{C}}=\mathfrak{so}(n,\mathbb{C})$ & $\mathfrak{h}^{\mathbb{C}}=\mathfrak{so}(k,\mathbb{C}) \oplus \mathfrak{so}(l,\mathbb{C})$ & $\mathfrak{h}^{\mathbb{C}}=\mathfrak{gl}\left(\frac{n}{2},\mathbb{C}\right)$ \\
\hline
$\mathfrak{g}=\mathfrak{so}(p,q)$ 
\multirow{2}{*} & $\mathfrak{h}=\mathfrak{so}(k_p,k_q) \oplus \mathfrak{so}(l_p,l_q))$ & $\mathfrak{h}=\mathfrak{u}\left(\frac{p}{2},\frac{q}{2}\right)$ \\
& ($p=q$) $\mathfrak{h}=\mathfrak{so}(p,\mathbb{C})$ & $\mathfrak{h}=\mathfrak{gl}(p,\mathbb{R})$ \\
\hline
$\mathfrak{g}=\mathfrak{so}^*\left(2\left(\frac{n}{2}\right)\right)$
\multirow{2}{*} & $\mathfrak{h}=\mathfrak{so}^*\left(2\left(\frac{k}{2}\right)\right) \oplus \mathfrak{so}^*\left(2\left(\frac{l}{2}\right)\right)$ & $\mathfrak{h}=\mathfrak{u}\left(\frac{k}{2},\frac{l}{2}\right)$ \\
& $\mathfrak{h}=\mathfrak{so}\left(\frac{n}{2},\mathbb{C}\right)$ & $\mathfrak{h}=\mathfrak{gl}\left(\frac{n}{4},\mathbb{H}\right)$ \\
\hline
\end{tabular}
\captionof{table}{Non-compact isotropy algebras of $\mathfrak{so}(n,\mathbb{C})$ and its real forms}
\end{center}

\begin{center}
\begin{tabular}{|*{3}{c|}}
\hline
$\mathfrak{g}^{\mathbb{C}}=\mathfrak{sp}(2n,\mathbb{C})$ & $\mathfrak{h}^{\mathbb{C}}=\mathfrak{sp}(2k,\mathbb{C}) \oplus \mathfrak{sp}(2l,\mathbb{C})$ & $\mathfrak{h}^{\mathbb{C}}=\mathfrak{gl}(n,\mathbb{C})$ \\
\hline
$\mathfrak{g}=\mathfrak{sp}(p,q)$
\multirow{2}{*} & $\mathfrak{h}=\mathfrak{sp}(k_p,k_q) \oplus \mathfrak{sp}(l_p,l_q)$ & $\mathfrak{h}=\mathfrak{u}(p,q)$ \\
& $(p=q)$ $\mathfrak{h}=\mathfrak{sp}(p,\mathbb{C})$ & $\mathfrak{h}=\mathfrak{gl}(p,\mathbb{H})$ \\
\hline
$\mathfrak{g}=\mathfrak{sp}(2n,\mathbb{R})$
\multirow{2}{*} & $\mathfrak{h}=\mathfrak{sp}(2k,\mathbb{R}) \oplus \mathfrak{sp}(2l,\mathbb{R})$ & $\mathfrak{h}=\mathfrak{u}(k,l)$ \\
& $\mathfrak{h}=\mathfrak{sp}(n,\mathbb{C})$ & $\mathfrak{h}=\mathfrak{gl}(n,\mathbb{R})$ \\
\hline
\end{tabular}
\captionof{table}{Non-compact isotropy algebras of $\mathfrak{sp}(2n,\mathbb{C})$ and its real forms}
\end{center}

\begin{center}
\begin{tabular}{|*{2}{c|}}
\hline
$\mathfrak{g}^{\mathbb{C}}=\mathfrak{g}_2^{\mathbb{C}}$ & $\mathfrak{h}^{\mathbb{C}}=\mathfrak{sl}(2,\mathbb{C}) \oplus \mathfrak{sl}(2,\mathbb{C})$ \\
\hline
$\mathfrak{g}=\mathfrak{g}_{2(2)}$ & $\mathfrak{h}=\mathfrak{sl}(2,\mathbb{R}) \oplus \mathfrak{sl}(2,\mathbb{R})$ \\
\hline
\end{tabular}
\captionof{table}{Non-compact isotropy algebras of $\mathfrak{g}_2^{\mathbb{C}}$ and its real form}
\end{center}

\begin{center}
\begin{tabular}{|*{3}{c|}}
\hline
$\mathfrak{g}^{\mathbb{C}}=\mathfrak{f}_4^{\mathbb{C}}$ & $\mathfrak{h}^{\mathbb{C}}=\mathfrak{sp}(6,\mathbb{C}) \oplus \mathfrak{sp}(2,\mathbb{C})$ & $\mathfrak{h}^{\mathbb{C}}=\mathfrak{so}(9,\mathbb{C})$ \\
\hline
$\mathfrak{g}=\mathfrak{f}_{4(4)}$
\multirow{2}{*} & $\mathfrak{h}=\mathfrak{sp}(6,\mathbb{R}) \oplus \mathfrak{sp}(2,\mathbb{R})$ & $\mathfrak{h}=\mathfrak{so}(4,5)$ \\
& $\mathfrak{h}=\mathfrak{sp}(1,2) \oplus \mathfrak{sp}(1)$ & \\
\hline
$\mathfrak{g}=\mathfrak{f}_{4(-20)}$ & $\mathfrak{h}=\mathfrak{sp}(1,2) \oplus \mathfrak{sp}(1)$ & $\mathfrak{h}=\mathfrak{so}(1,8)$ \\
\hline
\end{tabular}
\captionof{table}{Non-compact isotropy algebras of $\mathfrak{f}_4^{\mathbb{C}}$ and its real forms}
\end{center}

\begin{center}
\begin{tabular}{|*{5}{c|}}
\hline
$\mathfrak{g}^{\mathbb{C}}=\mathfrak{e}_6^{\mathbb{C}}$ & $\mathfrak{h}^{\mathbb{C}}=\mathfrak{sp}(8,\mathbb{C})$ &
$\mathfrak{h}^{\mathbb{C}}=\mathfrak{sl}(6,\mathbb{C}) \oplus \mathfrak{sl}(2,\mathbb{C})$ &
$\mathfrak{h}^{\mathbb{C}}=\mathfrak{so}(10,\mathbb{C}) \oplus \mathfrak{so}(2,\mathbb{C})$ & $\mathfrak{h}^{\mathbb{C}}=\mathfrak{f}_4^{\mathbb{C}}$ \\
\hline
$\mathfrak{g}=\mathfrak{e}_{6(6)}$
\multirow{2}{*} & $\mathfrak{h}=\mathfrak{sp}(2,2)$ &
$\mathfrak{h}=\mathfrak{sl}(6,\mathbb{R}) \oplus \mathfrak{sl}(2,\mathbb{R})$ &
$\mathfrak{h}=\mathfrak{so}(5,5) \oplus \mathfrak{so}(1,1)$ & $\mathfrak{h}=\mathfrak{f}_{4(4)}$ \\
& $\mathfrak{h}=\mathfrak{sp}(8,\mathbb{R})$ & $\mathfrak{h}=\mathfrak{sl}(3,\mathbb{H}) \oplus \mathfrak{su}(2)$ & & \\
\hline
$\mathfrak{g}=\mathfrak{e}_{6(2)}$
\multirow{2}{*} & $\mathfrak{h}=\mathfrak{sp}(1,3)$ &
$\mathfrak{h}=\mathfrak{su}(2,4) \oplus \mathfrak{su}(2)$ &
$\mathfrak{h}=\mathfrak{so}(4,6) \oplus \mathfrak{so}(2)$ & $\mathfrak{h}=\mathfrak{f}_{4(4)}$ \\
& $\mathfrak{h}=\mathfrak{sp}(8,\mathbb{R})$ & $\mathfrak{h}=\mathfrak{su}(3,3) \oplus \mathfrak{sl}(2,\mathbb{R})$ & $\mathfrak{h}=\mathfrak{so}^*(10) \oplus \mathfrak{so}(2) $ & \\
\hline
$\mathfrak{g}=\mathfrak{e}_{6(-14)}$
\multirow{2}{*} & $\mathfrak{h}=\mathfrak{sp}(2,2)$ &
$\mathfrak{h}=\mathfrak{su}(2,4) \oplus \mathfrak{su}(2)$ &
$\mathfrak{h}=\mathfrak{so}(2,8) \oplus \mathfrak{so}(2)$ & $\mathfrak{h}=\mathfrak{f}_{4(-20)}$ \\
& & $\mathfrak{h}=\mathfrak{su}(1,5) \oplus \mathfrak{sl}(2,\mathbb{R})$ & $\mathfrak{h}=\mathfrak{so}^*(10) \oplus \mathfrak{so}(2) $ & \\
\hline
$\mathfrak{g}=\mathfrak{e}_{6(-26)}$ & $\mathfrak{h}=\mathfrak{sp}(1,3)$ &
$\mathfrak{h}=\mathfrak{sl}(3,\mathbb{H}) \oplus \mathfrak{sp}(1)$ &
$\mathfrak{h}=\mathfrak{so}(1,9) \oplus \mathfrak{so}(1,1)$ & $\mathfrak{h}=\mathfrak{f}_{4(-20)}$ \\
\hline
\end{tabular}
\captionof{table}{Non-compact isotropy algebras of $\mathfrak{e}_6^{\mathbb{C}}$ and its real forms}
\end{center}

\begin{center}
\begin{tabular}{|*{4}{c|}}
\hline
$\mathfrak{g}^{\mathbb{C}}=\mathfrak{e}_7^{\mathbb{C}}$ & $\mathfrak{h}^{\mathbb{C}}=\mathfrak{sl}(8,\mathbb{C})$ &
$\mathfrak{h}^{\mathbb{C}}=\mathfrak{so}(12,\mathbb{C}) \oplus \mathfrak{sl}(2,\mathbb{C})$ & $\mathfrak{h}^{\mathbb{C}}=\mathfrak{e}_6^{\mathbb{C}} \oplus \mathfrak{so}(2,\mathbb{C})$ \\
\hline
$\mathfrak{g}=\mathfrak{e}_{7(7)}$
\multirow{3}{*} & $\mathfrak{h}=\mathfrak{su}(4,4)$ &
$\mathfrak{h}=\mathfrak{so}(6,6) \oplus \mathfrak{sl}(2,\mathbb{R})$ & $\mathfrak{h}=\mathfrak{e}_{6(6)} \oplus \mathfrak{so}(1,1)$ \\
& $\mathfrak{h}=\mathfrak{sl}(8,\mathbb{R})$  & $\mathfrak{h}=\mathfrak{so}^*(12) \oplus \mathfrak{sp}(1)$ & $\mathfrak{h}=\mathfrak{e}_{6(2)} \oplus \mathfrak{so}(2)$ \\
& $\mathfrak{h}=\mathfrak{sl}(4,\mathbb{H})$ & & \\
\hline
$\mathfrak{g}=\mathfrak{e}_{7(-5)}$
\multirow{2}{*} & $\mathfrak{h}=\mathfrak{su}(4,4)$ &
$\mathfrak{h}=\mathfrak{so}(4,8) \oplus \mathfrak{su}(2)$ & $\mathfrak{h}=\mathfrak{e}_{6(2)} \times \mathfrak{so}(2)$ \\
& $\mathfrak{h}=\mathfrak{su}(2,6)$  & $\mathfrak{h}=\mathfrak{so}^*(12) \oplus \mathfrak{sl}(2,\mathbb{R}) $ & $\mathfrak{h}=\mathfrak{e}_{6(-14)} \oplus \mathfrak{so}(2)$  \\
\hline
$\mathfrak{g}=\mathfrak{e}_{7(-25)}$
\multirow{2}{*} & $\mathfrak{h}=\mathfrak{sl}(4,\mathbb{H})$  &
$\mathfrak{h}=\mathfrak{so}(2,10) \oplus \mathfrak{sl}(2,\mathbb{R})$ & $\mathfrak{h}=\mathfrak{e}_{6(-14)} \oplus \mathfrak{so}(2)$ \\
& $\mathfrak{h}=\mathfrak{su}(2,6)$ & $\mathfrak{h}=\mathfrak{so}^*(12) \oplus \mathfrak{sp}(1)$ & $\mathfrak{h}=\mathfrak{e}_{6(-26)} \oplus \mathfrak{so}(1,1)$ \\
\hline
\end{tabular}
\captionof{table}{Non-compact isotropy algebras of $\mathfrak{e}_7^{\mathbb{C}}$ and its real forms}
\end{center}

\begin{center}
\begin{tabular}{|*{3}{c|}}
\hline
$\mathfrak{g}^{\mathbb{C}}=\mathfrak{e}_8^{\mathbb{C}}$ & $\mathfrak{h}^{\mathbb{C}}=\mathfrak{e}_7^{\mathbb{C}} \oplus \mathfrak{sl}(2,\mathbb{C})$ & $\mathfrak{h}^{\mathbb{C}}=\mathfrak{so}(16,\mathbb{C})$ \\
\hline
$\mathfrak{g}=\mathfrak{e}_{8(8)}$
\multirow{2}{*} & $\mathfrak{h}=\mathfrak{e}_{7(7)} \oplus \mathfrak{sl}(2,\mathbb{R})$ & $\mathfrak{h}=\mathfrak{so}(8,8)$ \\
& $\mathfrak{h}=\mathfrak{e}_{7(-5)} \oplus \mathfrak{su}(2)$ & $\mathfrak{h}=\mathfrak{so}^*(16)$ \\
\hline
$\mathfrak{g}=\mathfrak{e}_{8(-24)}$  
\multirow{2}{*} & $\mathfrak{h}=\mathfrak{e}_{7(-25)} \oplus \mathfrak{sl}(2,\mathbb{R})$ & $\mathfrak{h}=\mathfrak{so}(4,12)$ \\
& $\mathfrak{h}=\mathfrak{e}_{7(-5)} \oplus \mathfrak{su}(2)$ & $\mathfrak{h}=\mathfrak{so}^*(16)$ \\
\hline
\end{tabular}
\captionof{table}{Non-compact isotropy algebras of $\mathfrak{e}_8^{\mathbb{C}}$ and its real forms}
\end{center}

Note that not all the symmetric spaces given in these tables are irreducible. For instance $\mathfrak{sl}(n,\mathbb{C})/\mathfrak{s(gl(}k,\mathbb{C}) \oplus \mathfrak{gl}(l,\mathbb{C}))$ is not. The results of \cite{Berger} are more precise and we refer to them for the list of the irreducible symmetric spaces and the non-irreducibles ones.

We refer to \cite{Helgason} and \cite{Berger} for facts about symmetric spaces and local symmetric spaces.

\subsection{Riemannian symmetric spaces}

We stress that the symmetric spaces $G/H$ associated to the isotropy subalgebras $\mathfrak{h}$ of $\mathfrak{g}$ in Tables 2 to 9 are non-Riemannian. We discuss now a few features about Riemannian symmetric spaces, which are of the form $G/G^{\rho}$ with $G^{\rho}$ compact. The symmetric spaces which are Riemannian spaces of non-positive curvature are called symmetric spaces \textit{of non-compact type}. They are all of the form $S=G/K$ where $G=\Isom(S)^0$ and $K$ is a maximal compact subgroup. If it has no euclidean factors, then $G$ is semisimple, linear and centerless.

 Recall that if $G$ is a Lie group, all maximal compact subgroups are conjugated. If $G$ is semisimple, or more generally reductive, and if $K$ is a maximal compact subgroup, then the symmetric space $G/K$ is called the \textit{Riemannian symmetric space associated to $G$}. It follows that we can identify the smooth manifold
\[ S=G/K \]
with the set of all maximal compact subgroups of $G$. Remark that isogenous Lie groups have isometric associated Riemannian symmetric spaces. In particular, if $G$ is a semisimple linear Lie group, the associated Riemannian symmetric space is the same as that associated to its identity component $G^0$ or to $G/Z(G)$. We can thus assume that $G$ is connected and centerless. In this case, as the image of a maximal compact subgroup by an automorphism of $G$ is again a maximal compact subgroup, we have an action of $\Aut(G)=\Aut(\mathfrak{g})$ by isometries on $S=G/K$. Finally we have that the group of isometries of a symmetric space $S=G/K$ of non-compact type without euclidean factors is $\Aut(\mathfrak{g})$ where $\mathfrak{g}$ is the Lie algebra of $G$.

An important part of our work will be to compute dimensions of fixed point sets
\[ S^{\alpha}=\{ x \in S ~|~ \alpha(x)=x\}  \]
where $\alpha \in \Isom(S)=\Aut(\mathfrak{g})$. Assuming that $G$ is connected and centerless, the fixed point set $S^{\alpha}$ is the Riemannian symmetric space associated to $G^{\alpha}$ (recall that we denote by the same letter the automorphism of $\mathfrak{g}$ and that of $G$). If $A \in G$ we will denote by $S^A$ the fixed point set of the inner automorphism $\Ad(A)$. In the case where $A$ is of finite order, it can be conjugated in the maximal compact subgroup $K$. Then the fixed point set of $G$ by $\Ad(A)$ is the centralizer of $A$ in $G$, that is  $G^{\Ad(A)}=C_G(A)=\{B \in G ~|~ AB=BA \}$. A maximal compact subgroup of $C_G(A)$ is $C_K(A)$, the centralizer of $A$ in $K$. So we can identify $S^A$ with $C_G(A)/C_K(A)$, and we can write:
\[ \dim S^A=\dim C_G(A)-\dim C_K(A). \]

We refer to \cite{Helgason} for other facts about Riemannian symmetric spaces.

\subsection{Lattices and arithmetic groups}

A discrete subgroup $\Gamma$ of a Lie group $G$ is said to be a \textit{lattice} if the quotient $\Gamma \backslash G$ has finite Haar measure. It is said \textit{uniform} (or cocompact) if this quotient is compact and \textit{non-uniform} otherwise. The Borel density theorem (see \cite[Cor. 4.5.6]{Witte}) says that if $G$ is the group of real points of a connected semisimple algebraic group defined over $\mathbb{R}$, and if a lattice $\Gamma \subset G$ projects densely into the maximal compact factor of $G$, then $\Gamma$ is Zariski-dense in $G$. For instance, if $\mathbb{G}$ is a connected semisimple algebraic group defined over $\mathbb{Q}$, then the group $\mathbb{G}_{\mathbb{Z}}$ is a lattice in $\mathbb{G}_{\mathbb{R}}$ and thus Zariski-dense. The group $\mathbb{G}_{\mathbb{Z}}$ is the paradigm of an arithmetic group, which will be defined now.

Let $G$ be a semisimple Lie group with identity component $G^0$ and $\Gamma \subset G$ a lattice. The lattice $\Gamma$ is said to be \textit{arithmetic} if there are a connected algebraic group $\mathbb{G}$ defined over $\mathbb{Q}$, compact normal subgroups $K \subset G^0$, $K' \subset \mathbb{G}_{\mathbb{R}}^0$ and a Lie group isomorphism 
\[ \varphi: G^0/K \rightarrow \mathbb{G}_{\mathbb{R}}^0/K', \]
such that $\varphi(\bar{\Gamma})$ is commensurable to $\bar{\mathbb{G}_{\mathbb{Z}}}$, where $\bar{\Gamma}$ and $\bar{\mathbb{G}_{\mathbb{Z}}}$ are the images of $\Gamma\cap G^0$ and $\mathbb{G}_{\mathbb{Z}}\cap \mathbb{G}_{\mathbb{R}}^0 $ in  $G^0/K$ and $\mathbb{G}_{\mathbb{R}}^0/K$ (recall that two subgroups $H$ and $H'$ of $G$ are commensurable if their intersection is of finite index in both subgroups).

We say that the lattice $\Gamma \subset G$ is \textit{irreducible} if $\Gamma N$ is dense in $G$ for every non-compact, closed, normal subgroup $N$ of $G^0$. The Margulis arithmeticity theorem (see \cite[Ch. IX]{Margulis} and \cite[Thm. 5.2.1]{Witte}) tells us that in a way, most irreducible lattices are arithmetic.

\begin{thm}[Margulis arithmeticity theorem]\label{Théorème Margulis}
Let $G$ be the group of real points of a semisimple algebraic group defined over $\mathbb{R}$ and $\Gamma \subset G$ an irreducible lattice. If $G$ is not isogenous to $\SO(1,n) \times K$ or $\SU(1,n) \times K$ for any compact group $K$, then $\Gamma$ is arithmetic.
\end{thm}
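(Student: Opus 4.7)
The plan is to follow Margulis's original strategy, whose central technical input is his superrigidity theorem, and then deduce arithmeticity via restriction of scalars. First, one reduces to the essential case: replacing $G$ by $G/Z(G)$ and then passing to $G^0$ modulo its compact normal factors, one may assume $G$ is connected, adjoint, semisimple, and has no compact factors. The hypothesis that $\Gamma$ is irreducible survives this reduction, and the exclusion of groups isogenous to $\SO(1,n)\times K$ and $\SU(1,n)\times K$ means that either $G$ has real rank at least two, or every simple factor has real rank one but is of type $\Sp(k,1)$ or $F_{4(-20)}$. In either case Margulis's superrigidity theorem applies (supplemented, for the rank-one product case, by the archimedean superrigidity results of Corlette and Gromov--Schoen): every representation $\rho \colon \Gamma \to \mathbb{H}(k_v)$ with Zariski-dense unbounded image, where $\mathbb{H}$ is simple algebraic over a local field $k_v$, extends after a finite-index modification to a continuous homomorphism of $G$.

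Second, one uses superrigidity to show that the matrix entries of $\Gamma$ (in the adjoint representation, say) generate a number field $k$ and that $\Gamma$ is conjugate to a subgroup of $\mathbb{H}(\mathcal{O}_k)$ for some algebraic group $\mathbb{H}$ defined over $k$. The argument proceeds place by place: for each embedding $\sigma \colon k \hookrightarrow \mathbb{C}$ one extends the $\sigma$-twisted inclusion of $\Gamma$ to a homomorphism of the ambient Lie group at that archimedean place; for the places $\sigma$ other than the identity, unboundedness would contradict discreteness of $\Gamma$ after pulling back, forcing the image to lie in a compact form. At each non-archimedean place $v$, the non-archimedean version of superrigidity forces $\rho(\Gamma)$ to be bounded, hence contained in a maximal compact open subgroup, which gives the required integrality of matrix entries.

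Third, restriction of scalars from $k$ to $\mathbb{Q}$ converts $\mathbb{H}$ into a semisimple algebraic $\mathbb{Q}$-group $\mathbb{G} = \mathrm{Res}_{k/\mathbb{Q}}\mathbb{H}$, whose group $\mathbb{G}_{\mathbb{Z}}$ is commensurable to the diagonal image of $\Gamma$ under all archimedean completions of $k$. The compact normal subgroups $K \subset G^0$ and $K' \subset \mathbb{G}_{\mathbb{R}}^0$ and the isomorphism $\varphi \colon G^0/K \to \mathbb{G}_{\mathbb{R}}^0/K'$ demanded by the definition of arithmeticity are then obtained by projecting $\mathbb{G}_{\mathbb{R}}$ onto the factor corresponding to the identity embedding of $k$, absorbing into $K'$ the compact factors coming from the other archimedean places.

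The main obstacle is of course superrigidity itself, which is the deep technical heart of Margulis's proof: it rests on measurable boundary theory for semisimple groups, the Moore ergodicity theorem, the multiplicative ergodic theorem, and careful amenability arguments for minimal parabolic actions. In practice, rather than reconstruct any of this machinery, one invokes the statement as given in Chapter IX of Margulis's monograph or Theorem 5.2.1 of Witte Morris, and only verifies the reduction and restriction-of-scalars steps above.
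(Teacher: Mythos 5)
This is a theorem the paper simply cites from the literature (\cite[Ch.~IX]{Margulis} and \cite[Thm.~5.2.1]{Witte}) without proof, and you correctly note at the end of your outline that in practice one does exactly that. Your sketch of the actual argument (reduction to adjoint without compact factors, superrigidity, place-by-place boundedness, restriction of scalars) is the standard one and is essentially accurate. One small imprecision: in your second paragraph you frame the rank-one case as ``every simple factor has real rank one but is of type $\Sp(k,1)$ or $F_{4(-20)}$,'' which is needed precisely when $G$ modulo compact factors has exactly one non-compact simple factor of rank one; when $G$ has two or more non-compact simple factors (each possibly of rank one), its total real rank is at least two and Margulis's higher-rank superrigidity for irreducible lattices already applies without invoking Corlette or Gromov--Schoen. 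Beyond that, your decomposition of the argument matches the standard treatment, and since the paper itself leaves the proof to the references, there is no further discrepancy to compare.
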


Observe that $\SO(1,n) \times K$ and $\SU(1,n) \times K$ have real rank 1, so the arithmeticity theorem applies to every irreducible lattice in a group of real rank at least 2.

The definition of arithmeticity can be simplified in some cases. If $G$ is connected, centerfree and has no compact factors, the compact subgroup $K$ in the definition must be trivial. Moreover, if $\Gamma$ is non-uniform and irreducible, then the compact subgroup $K'$ is not needed either (see \cite[Cor. 5.3.2]{Witte}). Under the same assumptions, we can also assume that the algebraic group $\mathbb{G}$ is centerfree, and in this case the commensurator of $\mathbb{G}_{\mathbb{Z}}$ in $\mathbb{G}$ is $\mathbb{G}_{\mathbb{Q}}$ and $\varphi(\Gamma) \subset \mathbb{G}_{\mathbb{Q}}$. Under the same hypotheses on $G$, if $\Gamma$ is non irreducible, it is almost a product of irreducible lattices. In fact (see \cite[Prop. 4.3.3]{Witte}), there is a direct decomposition $G=G_1 \times \dots \times G_r$ such that $\Gamma$ is commensurable to $\Gamma_i \times \dots \times \Gamma_r$ where $\Gamma_i=\Gamma \cap G_i$ is an irreducible lattice in $G_i$.

The rational rank of the arithmetic group $\Gamma$, denoted by $\rk_{\mathbb{Q}} \Gamma$, is by definition the $\mathbb{Q}$-rank of the algebraic group $\mathbb{G}$ in the definition of arithmeticity, and we have 
\[ \rk_{\mathbb{Q}} \Gamma \leqslant \rk_{\mathbb{R}} \mathbb{G}. \]
Note that $\rk_{\mathbb{Q}} \Gamma=0$ if and only if $\Gamma$ is cocompact (see \cite[Thm. 8.4]{Borell}).

We refer to \cite{Borell} and \cite{Witte} for other facts about lattices and arithmetic groups. 

\subsection{Virtual cohomological dimension and proper geometric dimension}

Recall that the virtual cohomological dimension of a virtually torsion-free discrete subgroup $\Gamma$ is the cohomological dimension of any torsion-free subgroup $\Gamma'$ of finite index of $\Gamma$, that is
\[ \vcd(\Gamma)=\cd(\Gamma')=\max\{n ~|~ H^n(\Gamma',A) \neq 0 \text{ for a certain $\mathbb{Z}\Gamma'$-module $A$}\}. \]

If $X$ is a cocompact model for $\underline{E}\Gamma$, we can compute the virtual cohomological dimension of $\Gamma$ as
\begin{equation}
\label{Expression vcd}
\vcd(\Gamma)=\max\{n \in \mathbb{N} ~ | ~ \Ho_c^n(X) \neq 0 \}
\end{equation} 
where $\Ho_c^n(X)$ denotes the compactly supported cohomology of $X$ (see \cite[Cor. VIII.7.6]{Brown}).

The proper geometric dimension $\underline{\gd}(\Gamma)$ is the smallest possible dimension of a model for $\underline{E}\Gamma$. 

If $G$ is the group of real points of a semisimple algebraic group, $K\subset G$ a maximal compact subgroup, $S=G/K$ the associated Riemannian symmetric space and $\Gamma \subset G$ a uniform lattice of $G$, $S$ is a model for $\underline{E}\Gamma$ and has dimension $\vcd(\Gamma)$, so we have $\vcd(\Gamma)=\underline{\gd}(\Gamma)$, that is why we will be mostly interested in non-uniform lattices.

We will also rule out the case when the adjoint group $G_{ad}$ of $\mathfrak{g}$ has real rank 1, in fact we have the following (see \cite[Cor. 2.8]{Aramayona})

\begin{prop}\label{Prop rang 1}
Let $\mathbb{G}$ be an algebraic group defined over $\mathbb{R}$ and $\Gamma \subset \mathbb{G}_{\mathbb{R}}$ a lattice. If $\rk_{\mathbb{R}} \mathbb{G}=1$, then $\vcd(\Gamma)=\underline{\gd}(\Gamma)$.
\end{prop}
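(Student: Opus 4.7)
The plan is to split the argument into two cases, according to whether $\Gamma$ is uniform or not.

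In the uniform case I would set $G = \mathbb{G}_{\mathbb{R}}^0$, pick a maximal compact subgroup $K \subset G$, and consider the Riemannian symmetric space $S = G/K$. By the discussion in Section 2.5, $S$ is a model for $\underline{E}\Gamma$: its $\Gamma$-stabilizers are finite, and for any finite subgroup $H \subset \Gamma$ the fixed point set $S^H$ is totally geodesic, hence contractible. Since $\Gamma$ is uniform, $S$ is cocompact, and applying \eqref{Expression vcd} to $S \simeq \mathbb{R}^{\dim S}$ gives $\vcd(\Gamma) = \dim S$. Combined with the universal inequalities $\vcd(\Gamma) \leqslant \underline{\gd}(\Gamma) \leqslant \dim S$, this yields the claim.

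In the non-uniform case the rank 1 hypothesis becomes essential. I would invoke reduction theory for rank 1 lattices to exhibit a $\Gamma$-invariant family of pairwise disjoint open horoballs $\{B_\alpha\}$ around the finitely many $\Gamma$-orbits of cusps, and consider the neutered space $S_0 = S \setminus \bigcup_\alpha B_\alpha$, a $\Gamma$-invariant submanifold with boundary on which $\Gamma$ acts cocompactly. Each horoball $B_\alpha$ admits a $\Gamma_\alpha$-equivariant deformation retraction onto its boundary horosphere, where $\Gamma_\alpha$ denotes the cusp stabilizer, and these retractions patch into a $\Gamma$-equivariant deformation retraction of $S$ onto $S_0$. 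In rank 1 any finite-order isometry fixes a totally geodesic subspace meeting $S_0$ in a contractible set, provided the horoballs are chosen suitably, so $S_0$ is itself a cocompact model for $\underline{E}\Gamma$.

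To upgrade this to a model of dimension $\dim S - 1$, I would collapse the compact manifold with boundary $\Gamma \backslash S_0$ onto a spine of codimension one and lift the collapse $\Gamma$-equivariantly to a subcomplex $X \subset S_0$ of dimension $\dim S - 1$, giving $\underline{\gd}(\Gamma) \leqslant \dim S - 1$. A short exact-sequence computation of $\Ho_c^{\ast}(S_0)$ using the cusp decomposition, together with \eqref{Expression vcd}, then shows $\vcd(\Gamma) \geqslant \dim S - 1$, and combined with $\vcd(\Gamma) \leqslant \underline{\gd}(\Gamma)$ this delivers $\vcd(\Gamma) = \underline{\gd}(\Gamma) = \dim S - 1$.

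The hard part will be the equivariant spine construction: unequivariantly, a compact manifold with boundary collapses onto a codimension-one spine by standard PL or Morse-theoretic arguments, but arranging the collapse to respect the finite stabilizers of $\Gamma$, so that all fixed point sets remain contractible, is delicate. This is precisely where the rank 1 geometry is used, since the cusp stabilizers are then virtually abelian and the horospherical geometry is simple enough to allow an equivariant choice; in higher rank the unipotent radicals attached to cusps are non-abelian and obstruct such a uniform reduction of dimension.
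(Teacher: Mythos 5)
Your approach is genuinely different from the paper's: the paper simply cites \cite[Cor. 2.8]{Aramayona}, where the equality is obtained through the Bredon cohomological dimension and the L\"uck--Meintrup theorem (Theorem \ref{Théorème Luck-Meintrup}), not by exhibiting a concrete low-dimensional model. Your uniform case is correct, and the neutered space $S_0$ obtained by removing a $\Gamma$-invariant family of disjoint open horoballs is a valid cocompact model for $\underline{E}\Gamma$: the geodesic retraction of $S$ onto $S_0$ is $\Gamma$-equivariant, so for any finite $H \subset \Gamma$ it restricts to a deformation retraction of the contractible set $S^H$ onto $S^H \cap S_0$, which is therefore contractible. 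This is essentially the rank-one Borel--Serre bordification, and the $\vcd$ lower bound $\vcd(\Gamma) = \dim S - 1$ in the non-uniform case is available directly from Theorem \ref{Theoreme Borel-Serre} without a fresh computation.

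The genuine gap is the final dimension-reducing step. When $\Gamma$ has torsion, $\Gamma \backslash S_0$ is an orbifold rather than a manifold with boundary, and even after producing some $\Gamma$-invariant subcomplex $X \subset S_0$ of dimension $\dim S - 1$ onto which $S_0$ retracts, one still must check that $X^H$ is contractible for every finite $H \subset \Gamma$; this is the defining property of a model for $\underline{E}\Gamma$ and does not follow from an unequivariant codimension-one collapse. No general equivariant collapsing theorem of the kind you need is available, and this is exactly the difficulty that the Bredon/L\"uck--Meintrup route is designed to bypass --- indeed the paper explicitly remarks that it proves only the \emph{existence} of a model of dimension $\vcd(\Gamma)$, not a concrete construction. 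Your heuristic for why rank one should make the collapse possible is also off target: cusp stabilizers in rank one are virtually \emph{nilpotent}, not virtually abelian (the unipotent radicals of parabolics in $\SU(1,n)$, $\Sp(1,n)$ and $F_{4(-20)}$ are Heisenberg-type 2-step nilpotent groups), and the real difficulty concerns the finite stabilizers of interior points of $S_0$, not the cusp groups. Likewise, the obstruction in higher rank is the disparity between $\rk_{\mathbb{Q}}\Gamma$ and $\rk_{\mathbb{R}} G$ together with the possibility that $\dim S^{\alpha} \geqslant \vcd(\Gamma)$, not the non-abelianness of unipotent radicals.
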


For the case of higher real rank, recall that by Margulis arithmeticity theorem, $\Gamma$ is arithmetic as long as it is irreducible. 

If $\Gamma$ is non-uniform, $\Gamma \setminus S$ is not compact. However Borel and Serre constructed in \cite{Borel-Serre} a $\Gamma$-invariant bordification of $S$ called the \textit{Borel-Serre bordification} $X$ which is a cocompact model for $\underline{E}\Gamma$ (see \cite[Th. 3.2]{Ji}).

Using their bordification, Borel and Serre proved in \cite{Borel-Serre} the following theorem which links the virtual cohomological dimension and the rational rank of such an arithmetic lattice.

\begin{thm}[Borel-Serre]\label{Theoreme Borel-Serre}
Let $G$ be a semisimple Lie group, $K \subset G$ a maximal compact subgroup and $\Gamma \subset G$ an arithmetic lattice. Then: \[ \vcd(\Gamma) = \dim(G/K)-\rk_{\mathbb{Q}} \Gamma. \]
In particular: \[ \vcd(\Gamma) \geqslant \dim(G/K)-\rk_{\mathbb{R}} G. \]
\end{thm}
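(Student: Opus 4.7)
\emph{Proof plan.} The plan is to exhibit an explicit cocompact model for $\underline{E}\Gamma$, namely the Borel-Serre bordification $\bar{X}$ of the Riemannian symmetric space $S=G/K$, and to compute $\Ho^*_c(\bar X)$ by combining Poincaré-Lefschetz duality with the known homotopy type of $\partial\bar X$. Invoking the arithmeticity hypothesis, one may -- after quotienting by compact factors (which affect neither $\vcd$ nor $\rk_{\mathbb{Q}}$) and passing to a torsion-free finite index subgroup (which changes neither side of the desired equality) -- assume that $\Gamma$ is commensurable to $\mathbb{G}_{\mathbb{Z}}$ for a connected $\mathbb{Q}$-algebraic group $\mathbb{G}$ and that $\Gamma$ acts freely on $\bar X$. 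Set $d=\dim(G/K)$ and $r=\rk_{\mathbb{Q}}\Gamma=\rk_{\mathbb{Q}}\mathbb{G}$.

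The bordification $\bar X$ is a contractible orientable manifold-with-corners of dimension $d$ on which $\Gamma$ acts properly and cocompactly; moreover each closed boundary face is itself built out of symmetric-space data, so that finite subgroups of $\Gamma$ fix contractible subsets of $\bar X$. Hence $\bar X$ is a cocompact model for $\underline{E}\Gamma$, and formula~\eqref{Expression vcd} reduces the problem to identifying the largest $n$ with $\Ho^n_c(\bar X)\neq 0$.

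To that end, Poincaré-Lefschetz duality for the orientable manifold-with-corners $\bar X$ provides $\Ho^n_c(\bar X)\cong \Ho_{d-n}(\bar X,\partial\bar X)$, and the long exact sequence of the pair combined with the contractibility of $\bar X$ yields $\Ho_k(\bar X,\partial\bar X)\cong \tilde\Ho_{k-1}(\partial\bar X)$ for all $k\geqslant 1$. The decisive geometric input, due to Borel and Serre, is that $\partial\bar X$ is $\Gamma$-equivariantly homotopy equivalent to the rational Tits building $\Delta_{\mathbb{Q}}$ of $\mathbb{G}$; the Solomon-Tits theorem then produces $\Delta_{\mathbb{Q}}\simeq \bigvee S^{r-1}$, so that $\tilde\Ho_i(\partial\bar X)$ is non-zero precisely for $i=r-1$. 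Chaining the three isomorphisms, $\Ho^n_c(\bar X)\neq 0$ exactly when $n=d-r$, which gives the equality
\[ \vcd(\Gamma)=\dim(G/K)-\rk_{\mathbb{Q}}\Gamma. \]
The ``in particular'' clause is immediate from the general bound $\rk_{\mathbb{Q}}\Gamma\leqslant \rk_{\mathbb{R}}\mathbb{G}=\rk_{\mathbb{R}}G$ recalled in Section~2.6.

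The main obstacle is not the homological bookkeeping above, which is essentially mechanical once the ingredients are in place, but rather the two deep geometric inputs from \cite{Borel-Serre}: the construction of $\bar X$ with the correct $\Gamma$-equivariant corner structure, and the identification of its boundary, up to $\Gamma$-homotopy, with the rational Tits building $\Delta_{\mathbb{Q}}$. Both will be imported as black boxes; reproving either from scratch is a substantial undertaking well beyond the scope of the argument outlined here.
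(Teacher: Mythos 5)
The paper does not prove this theorem: it is quoted from \cite{Borel-Serre} and used as a black box, so there is no in-paper argument to compare against. Your sketch is, however, a correct reconstruction of the original Borel--Serre argument: reduce to a torsion-free finite index subgroup commensurable to $\mathbb{G}_{\mathbb{Z}}$, use the bordification $\bar X$ as a finite free $\Gamma$-CW model, and apply Brown's compactly supported cohomology criterion for $\cd$, Poincar\'e--Lefschetz duality for the manifold with corners, the long exact sequence of the pair $(\bar X,\partial\bar X)$, and the Solomon--Tits theorem identifying $\partial\bar X$ (up to $\Gamma$-homotopy, via the rational Tits building) with a wedge of $(r-1)$-spheres, $r=\rk_{\mathbb{Q}}\Gamma$. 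The chain $\Ho^n_c(\bar X)\cong \Ho_{d-n}(\bar X,\partial\bar X)\cong \tilde\Ho_{d-n-1}(\partial\bar X)$ then isolates $n=d-r$ as the unique nonvanishing degree, which is exactly what is needed. Two cosmetic remarks: once you have passed to a torsion-free subgroup, the clause about finite subgroups fixing contractible subsets is superfluous (it is only needed to say $\bar X$ models $\underline{E}\Gamma$ for the full $\Gamma$, not for the free action you actually use); and the argument should be understood to cover the cocompact case $r=0$ separately, where $\partial\bar X=\varnothing$ and $\Ho^d_c(\bar X)\cong\Ho_0(\bar X)\neq 0$ directly. Borel and Serre in fact extract more from the same computation (that $\Gamma$ is a duality group with dualizing module the Steinberg module), but for the dimension statement your truncation of the argument suffices.
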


Before moving on, note that we will often in this article consider groups up to isogeny, and the philosophy behind it is that normal finite subgroups do not change the dimensions, indeed we have:

\begin{lemma}\label{Finite normal subgroups}
Let $\Gamma$ be an infinite discrete group, and $N \subset \Gamma$ a finite normal subgroup. Then:
\[ \underline{\gd}(\Gamma)=\underline{\gd}\left(\Gamma/N\right), \]
\[ \vcd(\Gamma)=\vcd\left(\Gamma/N\right). \]
\end{lemma}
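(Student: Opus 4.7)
The plan is to prove each equality by exhibiting, in both directions, a suitable finite-index torsion-free subgroup (for $\vcd$) or a suitable classifying space (for $\underline{\gd}$), relying at each step only on the finiteness of $N$.

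For the $\vcd$ equality, I would start from a torsion-free finite-index subgroup $\Gamma' \subset \Gamma$, which exists since $\Gamma$ is (implicitly) assumed virtually torsion-free. Because $N$ is finite and $\Gamma'$ is torsion-free, $\Gamma' \cap N = \{1\}$, so the projection $\pi \colon \Gamma \to \Gamma/N$ restricts to an injection on $\Gamma'$. The image $\pi(\Gamma')=\Gamma' N /N$ has index $[\Gamma : \Gamma' N] \leqslant [\Gamma : \Gamma']<\infty$ in $\Gamma/N$, so $\pi(\Gamma')$ is a torsion-free finite-index subgroup of $\Gamma/N$, yielding
\[ \vcd(\Gamma)=\cd(\Gamma')=\cd(\pi(\Gamma'))=\vcd(\Gamma/N). \]

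For the $\underline{\gd}$ equality I would prove both inequalities separately. For $\underline{\gd}(\Gamma/N)\leqslant \underline{\gd}(\Gamma)$, take a minimal model $X$ for $\underline{E}\Gamma$ and consider the fixed point subcomplex $X^N$. Since $N$ is normal in $\Gamma$, the action of $\Gamma$ on $X$ descends to an action of $\Gamma/N$ on $X^N$. Any finite subgroup of $\Gamma/N$ is of the form $H/N$ with $H\subset \Gamma$ a finite subgroup (since $H$ is an extension of a finite group by the finite group $N$), and $(X^N)^{H/N}=X^H$ is contractible by hypothesis. Stabilizers in $\Gamma/N$ are images of finite stabilizers in $\Gamma$, hence finite. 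So $X^N$ is a model for $\underline{E}(\Gamma/N)$ of dimension at most $\dim X$.

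For the reverse inequality $\underline{\gd}(\Gamma)\leqslant\underline{\gd}(\Gamma/N)$, I would take a minimal model $Y$ for $\underline{E}(\Gamma/N)$ and pull back the action via $\pi\colon \Gamma\to\Gamma/N$. For any finite $H\subset \Gamma$ the fixed point set $Y^H$ equals $Y^{\pi(H)}$, which is contractible since $\pi(H)$ is a finite subgroup of $\Gamma/N$. Stabilizers in $\Gamma$ are preimages under $\pi$ of finite stabilizers in $\Gamma/N$, hence finite (being extensions of finite groups by $N$). Therefore $Y$ is a model for $\underline{E}\Gamma$ of the same dimension, finishing the proof.

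The argument is essentially formal and the main point is really bookkeeping: the key lemma to keep straight is that if $N$ is finite and normal in $\Gamma$, then finite subgroups of $\Gamma/N$ pull back to finite subgroups of $\Gamma$, and torsion-free finite-index subgroups of $\Gamma$ push forward isomorphically to finite-index subgroups of $\Gamma/N$. No substantive obstacle arises; I expect the verification to be routine once these two correspondences are in place.
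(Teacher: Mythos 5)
Your proof is correct and follows essentially the same route as the paper: for $\underline{\gd}$ you use $X^N$ in one direction and pull back a model via $\pi$ in the other, and for $\vcd$ you observe that a torsion-free finite-index subgroup $\Gamma'\subset\Gamma$ maps isomorphically onto the torsion-free finite-index subgroup $\Gamma'N/N$ of $\Gamma/N$. Your version simply spells out the bookkeeping (stabilizers, lifts of finite subgroups, $\Gamma'\cap N=\{1\}$) that the paper leaves implicit.
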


\begin{demo}
For the first equality: if $X$ is a model for $\underline{E}\Gamma$ it follows easily that $X^N$ is a model for $\underline{E}(\Gamma/N)$ of dimension lower than those of $X$, so:
\[ \underline{\gd}(\Gamma) \geqslant \underline{\gd}\left(\Gamma/N\right). \]
Reciprocally, a model for $\underline{E}(\Gamma/N)$ is also a model for $\underline{E}\Gamma$ and we have the other inequality.

For the second equality, it suffices to recall that $\vcd(\Gamma)=\cd(\Gamma')$ where $\Gamma'\subset \Gamma$ is a torsion-free subgroup of finite index, and in this case $\Gamma'N/N$ is a torsion-free subgroup of finite index of $\Gamma/N$ isomorphic to $\Gamma'$.
\end{demo}

We refer to \cite{Brown} and \cite{Borel-Serre} for facts about the (virtual) cohomological dimension and geometric dimension.

\subsection{Bredon cohomology}

The Bredon cohomological dimension $\underline{\cd}(\Gamma)$ is an algebraic counterpart to the proper geometric dimension $\underline{\gd}(\Gamma)$. We recall how $\underline{\cd}(\Gamma)$ is defined and a few of its properties.

Let $\Gamma$ be a discrete group and $\mathcal{F}$ be the family of subgroups of $\Gamma$. The \textit{orbit category} $\mathcal{O}_{\mathcal{F}}\Gamma$ is the category whose objects are left coset spaces $\Gamma/H$ with $H \in \mathcal{F}$ and where the morphisms are all $\Gamma$-equivariant maps between them. An $\mathcal{O}_{\mathcal{F}}\Gamma$\textit{-module} is a contravariant functor
\[ M: \mathcal{O}_{\mathcal{F}}\Gamma \rightarrow \mathbb{Z}\text{-mod} \]
to the category of $\mathbb{Z}$-modules. The \textit{category of $\mathcal{O}_{\mathcal{F}}\Gamma$-modules}, denoted by Mod-$\mathcal{O}_{\mathcal{F}}\Gamma$, has as objects all the $\mathcal{O}_{\mathcal{F}}\Gamma$-modules and all the natural transformations between them as morphisms. One can show that Mod-$\mathcal{O}_{\mathcal{F}}\Gamma$ is an abelian category and that we can construct projective resolutions on it. The \textit{Bredon cohomology of $\Gamma$} with coefficients in $M \in$ Mod-$\mathcal{O}_{\mathcal{F}}\Gamma$, denoted by $\Ho_{\mathcal{F}}^*(\Gamma,M)$, is by definition the cohomology associated to the cochain of complexes $\Hom_{\mathcal{O}_{\mathcal{F}}\Gamma}(C_*,M)$ where $C_* \rightarrow \underline{\mathbb{Z}}$ is a projective resolution of the functor $\underline{\mathbb{Z}}$ which maps all objects to $\mathbb{Z}$ and all morphisms to the identity map. If $X$ is a model for $\underline{E}\Gamma$, the augmented cellular chain complexes $C_*(X^H) \rightarrow \mathbb{Z}$ of the fixed points sets $X^H$ for $H \in \mathcal{F}$ form such a projective (even free) resolution $C_*(X^-) \rightarrow \underline{\mathbb{Z}}$. Thus we have
\[ \Ho_{\mathcal{F}}^n(\Gamma,M)=\Ho^n(\Hom_{\mathcal{O}_{\mathcal{F}}\Gamma}
(C_*(X^-),M)). \]
The \textit{Bredon cohomological dimension of $\Gamma$ for proper actions}, denoted by $\underline{\cd}(\Gamma)$ is defined as
\[ \underline{\cd}(\Gamma)=\sup\{n \in \mathbb{N}~|~ \exists M \in \text{Mod-}\mathcal{O}_{\mathcal{F}}\Gamma: \Ho_{\mathcal{F}}^n(\Gamma,M) \neq 0\}. \]
As we said above, this invariant can be viewed as an algebraic counterpart to $\underline{\gd}(\Gamma)$. Indeed Lück and Meintrup proved in \cite{Luck-Meintrup} the following theorem:

\begin{thm}[Lück-Meintrup]\label{Théorème Luck-Meintrup}
If $\Gamma$ is a discrete group with $\underline{\cd}(\Gamma) \geqslant 3$, then $\underline{\gd}(\Gamma)=\underline{\cd}(\Gamma)$.
\end{thm}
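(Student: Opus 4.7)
The plan is to prove both inequalities, of which only one is substantive. The easy direction is $\underline{\cd}(\Gamma)\leqslant \underline{\gd}(\Gamma)$: if $X$ is a model for $\underline{E}\Gamma$ of minimal dimension $d=\underline{\gd}(\Gamma)$, then the augmented cellular chain complex $C_*(X^-)\to \underline{\mathbb{Z}}$ provided by the construction recalled just before the theorem is a free (hence projective) resolution of length $d$ in $\text{Mod-}\mathcal{O}_{\mathcal{F}}\Gamma$. This forces $\underline{\cd}(\Gamma)\leqslant d$ by the very definition of Bredon cohomological dimension.

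For the converse $\underline{\gd}(\Gamma)\leqslant n:=\underline{\cd}(\Gamma)$, I would follow the strategy of the classical Eilenberg--Ganea theorem, transposed into Bredon's equivariant framework. First, fix an arbitrary model $X$ for $\underline{E}\Gamma$ (which exists because the family of finite subgroups is fine enough), yielding a free resolution $F_*=C_*(X^-)\to \underline{\mathbb{Z}}$. Since $\underline{\cd}(\Gamma)=n$, there also exists a projective resolution of $\underline{\mathbb{Z}}$ of length $n$. A Bredon version of Schanuel's lemma, applied inductively, then shows that the $n$-th syzygy $K_n:=\ker(F_{n-1}\to F_{n-2})$ is projective in $\text{Mod-}\mathcal{O}_{\mathcal{F}}\Gamma$. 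Thus one obtains a truncated projective resolution
\[ 0\to K_n\to F_{n-1}\to \cdots \to F_0\to \underline{\mathbb{Z}}\to 0. \]

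The heart of the proof is the \emph{geometric realization} of this algebraic data. Take the $(n-1)$-skeleton $X^{(n-1)}$ and attach equivariant $n$-cells to kill $K_n$. The crucial input is that every projective object in $\text{Mod-}\mathcal{O}_{\mathcal{F}}\Gamma$ is a direct summand of a sum of free functors $\mathbb{Z}[\Gamma/H,-]$ with $H$ finite; each such free summand corresponds geometrically to an equivariant cell $\Gamma/H\times D^n$. A generating set of $K_n$ can therefore be realized by gluing equivariant $n$-cells along representative maps $\Gamma/H\times S^{n-1}\to X^{(n-1)}$. Choosing such attachments so that the resulting cellular complex $Y$ of dimension $n$ has cellular chains agreeing with the truncated resolution makes every fixed-point subspace $Y^H$ acyclic for each finite subgroup $H\leqslant \Gamma$, by construction of the chain functors $C_*(Y^-)$.

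The main obstacle, and the reason for the hypothesis $n\geqslant 3$, is upgrading acyclicity of each $Y^H$ to \emph{contractibility}, which is what the definition of $\underline{E}\Gamma$ demands. For this one needs each $Y^H$ to be simply connected, and attaching only cells of dimension $\leqslant n$ starting from a $1$-skeleton that one can arrange to be simply connected on each fixed set (by adding at worst $2$-cells) guarantees this exactly when $n\geqslant 3$. Once $Y^H$ is simply connected and acyclic, Hurewicz and the equivariant Whitehead theorem imply it is contractible, so $Y$ is a genuine model for $\underline{E}\Gamma$ of dimension $n=\underline{\cd}(\Gamma)$. The $n\geqslant 3$ restriction is thus the analogue of the notorious Eilenberg--Ganea gap at dimension $2$, and controlling this low-dimensional obstruction is the delicate point of the argument.
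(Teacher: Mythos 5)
This theorem is not proved in the paper; it is cited directly from Lück and Meintrup's work, so there is no in-paper argument to compare against. Your sketch does, however, correctly reproduce the broad strategy of the actual Lück--Meintrup proof: the easy inequality $\underline{\cd}(\Gamma)\leqslant\underline{\gd}(\Gamma)$ via the cellular Bredon resolution coming from a model, and the converse via a Bredon-equivariant Eilenberg--Ganea argument, with $n\geqslant 3$ appearing for exactly the reason you name (attaching cells of dimension $\geqslant 3$ does not change $\pi_1$ of fixed-point sets, so acyclicity can be promoted to contractibility via Hurewicz and Whitehead).

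There are two places where the sketch glosses over a genuine technical step. First, the geometric realization paragraph treats $K_n$ as if it were free: you correctly observe that $K_n$ is projective, hence a direct summand of a direct sum of the free functors $\mathbb{Z}[\Gamma/H,-]$, but only \emph{free} Bredon modules can appear as the top cellular chain module of a $\Gamma$-CW-complex, since each free summand corresponds to an orbit of equivariant cells. To proceed one must first stabilize: replace the truncated resolution by $0\to K_n\oplus F'\to F_{n-1}\oplus F'\to\cdots$ where $F'$ is a free Bredon module chosen so that $K_n\oplus F'$ is free (an Eilenberg-swindle-type argument, which in the Bredon setting requires some care about generators and the orbit category). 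The extra summand $F'$ is realized by gluing additional trivial $(n-1)$-cells. Without this step, the passage from ``$K_n$ is a direct summand of a free'' to ``a generating set of $K_n$ can be realized by equivariant $n$-cells'' does not follow. Second, the simple-connectivity paragraph conflates two constructions: you simultaneously appeal to using the $(n-1)$-skeleton $X^{(n-1)}$ of an existing model (whose fixed sets are already simply connected once $n-1\geqslant 2$, since skeleta below the top dimension inherit $\pi_1$ from $X^H\simeq *$) and to building a fresh $1$-skeleton and adding $2$-cells. Either route works, but it should be a single coherent one; in the skeleton approach the ``add $2$-cells'' step is unnecessary, while in the from-scratch approach one needs to justify separately that a $2$-dimensional proper $\Gamma$-CW-complex with simply connected fixed sets exists before attaching higher cells. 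With these two points tightened, the sketch becomes a faithful outline of the cited proof.
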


We explain now the strategy to prove that $\vcd(\Gamma)=\underline{\cd}(\Gamma)$, beginning with some material and definitions. Recall that if $G$ is the group of real points of a semisimple algebraic group and $\Gamma \subset G$ a lattice, then the Borel-bordification $X$ is a model for $\underline{E}\Gamma$. Note also that if $H$ is a finite subgroup of $\Gamma$, $\dim(X^H)=\dim(S^H)$. If we denote $\mathcal{F}_0$ the family of finite subgroups of $\Gamma$ containing properly the kernel of $\Gamma$, $X_{sing}$ the subspace of the Borel-bordification $X$ consisting of points whose stabilizer is stricly larger than the kernel of $\Gamma$, and
\[ \mathcal{S}=\{X^H~|~H \in \mathcal{F}_0 \text{ and } \nexists H' \in \mathcal{F}_0 \text{ with }X^H \subsetneq X^H \}, \]
then we have
\[ X_{sing}=\bigcup_{X^H \in \mathcal{S}} X^H. \]
Also every fixed point set $X^H \in \mathcal{S}$ is of the form $X^{\alpha}$ where $\alpha \in \Gamma$ is of finite order and non-central.

In general, computing $\underline{\cd}(\Gamma)$ is not an easy task. However, if $\Gamma$ admits a cocompact model $X$ for $\underline{E}\Gamma$, then there is a version of the formula \eqref{Expression vcd} for the Bredon cohomological dimension. In fact, from \cite[Th. 1.1]{Degrijse} we get that
\[ \underline{\cd}(\Gamma)=\max \{n \in \mathbb{N} ~|~ \exists K \in \mathcal{F}_0 ~ s.t. ~ \Ho_c^n(X^K,X_{sing}^K) \neq 0 \} \]
where $X^K$ is the fixed point set of $X$ under $K$ and $X_{sing}^K$ is the subcomplex of $X^K$ consisting of those cells that are fixed by a finite subgroup of $\Gamma$ that strictly contains $K$. 

 Using the above caracterisations of $\vcd(\Gamma)$ and $\underline{\cd}(\Gamma)$, one can show (see \cite[Prop. 3.3]{Aramayona})

\begin{prop}
Let $G$ be the group of real points of a semisimple algebraic group $\mathbb{G}$ of real rank at least two, $\Gamma \subset G$ a non-uniform lattice of $G$, $K\subset G$ a maximal compact subgroup and $S=G/K$ the associated Riemannian symmetric space. If 
\begin{enumerate}
\item $\dim (X^{\alpha}) \leqslant \vcd(\Gamma)$ for every $X^{\alpha} \in \mathcal{S}$, and
\item the homomorphism $\Ho^{\vcd(\Gamma)}_c(X) \rightarrow \Ho^{\vcd(\Gamma)}_c(X_{sing})$ is surjective
\end{enumerate}
then $\vcd(\Gamma)=\underline{\cd}(\Gamma)$.
\end{prop}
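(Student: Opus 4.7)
The standard inequality $\vcd(\Gamma) \leqslant \underline{\cd}(\Gamma)$ is automatic (for a torsion-free finite-index subgroup $\Gamma' \subset \Gamma$ one has $\cd(\Gamma') = \vcd(\Gamma)$, and this is bounded by any Bredon-type invariant), so the content of the proposition is the reverse inequality $\underline{\cd}(\Gamma) \leqslant \vcd(\Gamma)$. The plan is to use the formula for $\underline{\cd}(\Gamma)$ stated just above the proposition, understood also to include the contribution at the kernel of the action on $X$ (where $X^K = X$ and $X^K_{sing} = X_{sing}$). It thus suffices to prove that $\Ho^n_c(X^K, X^K_{sing}) = 0$ for every $n > \vcd(\Gamma)$ and every finite subgroup $K \subset \Gamma$.

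I would first dispatch the case $K \in \mathcal{F}_0$. Every such fixed set $X^K$ is contained in some maximal fixed set $X^{\alpha} \in \mathcal{S}$, since the dimensions of the various $X^H$ are uniformly bounded by $\dim X$ and so a chain of strict inclusions $X^H \subsetneq X^{H'} \subsetneq \dots$ must terminate. By hypothesis (1), this gives $\dim X^K \leqslant \dim X^{\alpha} \leqslant \vcd(\Gamma)$. As $X^K$ inherits a CW structure from the Borel--Serre bordification $X$, the relative cellular cochain complex computing $\Ho^*_c(X^K, X^K_{sing})$ is concentrated in degrees $\leqslant \dim X^K$, which gives the desired vanishing.

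The remaining case is when $K$ equals the kernel of $\Gamma$. Here the long exact sequence of the pair $(X, X_{sing})$ reads
\[ \Ho^{n-1}_c(X) \to \Ho^{n-1}_c(X_{sing}) \to \Ho^n_c(X, X_{sing}) \to \Ho^n_c(X) \to \Ho^n_c(X_{sing}). \]
For $n > \vcd(\Gamma)$ the term $\Ho^n_c(X)$ vanishes by the formula \eqref{Expression vcd}. Since $X_{sing} = \bigcup_{X^{\alpha} \in \mathcal{S}} X^{\alpha}$, hypothesis (1) forces $\dim X_{sing} \leqslant \vcd(\Gamma)$, so $\Ho^{n-1}_c(X_{sing})$ also vanishes as soon as $n > \vcd(\Gamma) + 1$, handling all those degrees. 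At the critical degree $n = \vcd(\Gamma) + 1$ the sequence reduces to
\[ \Ho^{\vcd(\Gamma)}_c(X) \xrightarrow{\varphi} \Ho^{\vcd(\Gamma)}_c(X_{sing}) \to \Ho^{\vcd(\Gamma)+1}_c(X, X_{sing}) \to 0, \]
and hypothesis (2) is exactly the surjectivity of $\varphi$, which kills the third term.

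Combining the two cases yields $\Ho^n_c(X^K, X^K_{sing}) = 0$ for every $n > \vcd(\Gamma)$ and every finite $K \subset \Gamma$, hence $\underline{\cd}(\Gamma) \leqslant \vcd(\Gamma)$. The argument is otherwise a formal long-exact-sequence chase, and the key observation is that the two hypotheses are calibrated exactly to the obstructions appearing: condition (1) controls dimension away from the critical degree (and handles all $K \in \mathcal{F}_0$ uniformly), while condition (2) is tailored precisely to the single remaining connecting homomorphism in degree $\vcd(\Gamma)+1$. There is thus no serious obstacle in this proof itself; the genuine work, carried out in the subsequent sections, is to verify that hypotheses (1) and (2) do hold for lattices in the automorphism group of each semisimple Lie algebra.
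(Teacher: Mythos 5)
Your proposal is essentially correct and matches the approach one would find in the cited source: the paper itself does not prove this proposition but cites it as \cite[Prop.~3.3]{Aramayona}, and the argument there runs exactly as you describe — apply the Degrijse–Mart\'inez-P\'erez formula for $\underline{\cd}(\Gamma)$, handle the isotropy groups in $\mathcal{F}_0$ by the dimension bound of condition (1), and use the long exact sequence of the pair $(X,X_{sing})$ in compactly supported cohomology together with condition (2) to kill the single critical degree $\vcd(\Gamma)+1$ at the kernel. You also correctly spotted that the formula from \cite{Degrijse}, as displayed in the paper, must be understood to range over \emph{all} finite subgroups (including the kernel of the $\Gamma$-action on $X$) rather than only those in $\mathcal{F}_0$, since otherwise condition (2) would play no role.

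One small flaw worth flagging: to show that every $X^K$ with $K\in\mathcal{F}_0$ is contained in some maximal fixed set $X^{\alpha}\in\mathcal{S}$, you argue that chains of strict inclusions must terminate because dimensions are bounded by $\dim X$. But a strict inclusion of fixed-point subcomplexes need not strictly increase dimension, so the dimension bound alone does not stop the chain. The correct reason is finiteness: pick any $x\in X^K$; then every $H'$ with $X^{H'}\supseteq X^K$ satisfies $H'\subseteq\Gamma_x$, and $\Gamma_x$ is finite because the action is proper, so the poset $\{X^{H'}: X^{H'}\supseteq X^K\}$ is finite and thus has a maximal element. With that repair the proof goes through. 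Once you have $X^K\subseteq X^{\alpha}$ with $X^{\alpha}\in\mathcal{S}$, condition (1) gives $\dim X^K\leq\vcd(\Gamma)$, and your cellular-cochain vanishing argument finishes that case; the long-exact-sequence chase at the kernel is stated correctly and in the right direction.
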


Note that in \cite{Aramayona} the authors assume that $\mathbb{G}$ is connected but this hypothesis is not needed as the Borel-Serre bordification is still a model for $\underline{E}\Gamma$ if $\mathbb{G}$ is not connected (see \cite[Th. 3.2]{Ji}).
As $\dim(X^{\alpha})=\dim(S^{\alpha})$ we have immediately the following lemma as a corollary of the previous proposition (see \cite[Cor. 3.4]{Aramayona})

\begin{lemma}\label{Lemme clé}
With the same notations as above, if $\dim S^{\alpha}<\vcd(\Gamma)$ for all $\alpha\in \Gamma$ of finite order and non central, then $\underline{\cd}(\Gamma)=\vcd(\Gamma)$.
\end{lemma}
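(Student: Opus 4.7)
The plan is to derive the lemma as an immediate corollary of the preceding proposition. The essential observation is that the hypothesis $\dim S^{\alpha} < \vcd(\Gamma)$ is strictly stronger than condition (1) of the proposition, and moreover forces condition (2) to hold for trivial reasons.

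More precisely, I would proceed as follows. First, recall from the setup that $\dim(X^{\alpha}) = \dim(S^{\alpha})$ for every finite-order non-central $\alpha \in \Gamma$, since the Borel--Serre bordification $X$ is obtained from $S$ by attaching boundary components corresponding to rational parabolics and the fixed-point-set of a finite order element is homotopy equivalent to $S^{\alpha}$ of the same dimension. Consequently, the hypothesis of the lemma gives $\dim(X^{\alpha}) < \vcd(\Gamma)$ for every such $\alpha$, which in particular verifies condition (1) of the proposition (in fact with strict inequality).

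Next I would examine condition (2). Every $X^H \in \mathcal{S}$ is, as recalled in the excerpt, of the form $X^{\alpha}$ for some finite order non-central $\alpha \in \Gamma$. Since
\[ X_{sing} = \bigcup_{X^H \in \mathcal{S}} X^H, \]
the dimension of $X_{sing}$ is bounded by the maximum of the $\dim(X^{\alpha})$ appearing in the union, hence is strictly less than $\vcd(\Gamma)$. Therefore $\Ho_c^{\vcd(\Gamma)}(X_{sing}) = 0$, and the homomorphism
\[ \Ho_c^{\vcd(\Gamma)}(X) \longrightarrow \Ho_c^{\vcd(\Gamma)}(X_{sing}) \]
is trivially surjective, so condition (2) is also satisfied.

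Applying the proposition yields $\underline{\cd}(\Gamma) = \vcd(\Gamma)$, which is exactly the conclusion. I do not anticipate a genuine obstacle here: the only point requiring a little care is the identification $\dim(X^{\alpha}) = \dim(S^{\alpha})$, which is standard for the Borel--Serre bordification but worth mentioning explicitly. Everything else is a formal consequence of the dimensional hypothesis.
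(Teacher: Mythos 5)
Your proposal is correct and takes essentially the same approach as the paper, which simply states that the lemma follows "immediately as a corollary of the previous proposition" since $\dim(X^{\alpha})=\dim(S^{\alpha})$. You have merely spelled out the two points the paper leaves implicit: that the strict inequality implies condition (1), and that $\dim X_{sing}<\vcd(\Gamma)$ forces $\Ho_c^{\vcd(\Gamma)}(X_{sing})=0$, making condition (2) vacuous.
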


This lemma will be the key argument to prove the Main Theorem. However, as it is the case in \cite{Aramayona}, in some cases we will need the following result (see \cite[Cor 3.7]{Aramayona})

\begin{lemma}\label{Lemme secours}
With the same notations as above, suppose that
\begin{enumerate}
\item $\dim(S^{\alpha}) \leqslant \vcd(\Gamma)$ for every non-central finite order element $\alpha \in \Gamma$,
\item $\dim(S^{\alpha} \cap S^{\beta}) \leqslant \vcd(\Gamma)-2$ for any distinct $S^{\alpha}, S^{\beta} \in \mathcal{S}$, and
\item for any finite set of non-central finite order elements $\alpha_1,\dots, \alpha_r$ with $S^{\alpha_i} \neq S^{\alpha_j}$ for $i\neq j$, $\dim(S^{\alpha_i})=\vcd(\Gamma)$, and such that $C_{\Gamma}(\alpha_i)$ is a cocompact lattice in $C_G(\alpha_i)$, there exists a rational flat $F$ in $\mathcal{S}$ that intersects $S^{\alpha_i}$ in exactly one point and is disjoint from $S^{\alpha_i}$ for $i \in \{2,\dots,n\}$.
\end{enumerate}
Then $\vcd(\Gamma)=\underline{\cd}(\Gamma)$.
\end{lemma}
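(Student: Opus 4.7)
The plan is to deduce Lemma~\ref{Lemme secours} from the proposition stated just above Lemma~\ref{Lemme clé}: hypothesis 1 of the lemma is exactly condition 1 of that proposition, so the whole task is to verify condition 2, namely surjectivity of the restriction map $\Ho_c^n(X) \to \Ho_c^n(X_{sing})$ in top degree $n := \vcd(\Gamma)$, where $X$ denotes the Borel--Serre bordification, a cocompact model for $\underline{E}\Gamma$.

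First I would decompose $X_{sing} = \bigcup_i X^{\alpha_i}$ where the $\alpha_i$ run over representatives of the (finitely many, modulo $\Gamma$) maximal strata in $\mathcal{S}$. Since $\dim X^{\alpha} = \dim S^{\alpha}$, hypothesis 1 ensures that only strata of top dimension $n$ can carry cohomology in degree $n$. Hypothesis 2 then gives $\dim(X^{\alpha_i} \cap X^{\alpha_j}) \leqslant n-2$ for $i \neq j$, so the Mayer--Vietoris spectral sequence for this cover degenerates in total degree $n$ and yields
\[ \Ho_c^n(X_{sing}) \cong \bigoplus_i \Ho_c^n(X^{\alpha_i}), \]
the sum being taken over top-dimensional strata. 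A top-degree compactly supported class on $X^{\alpha_i}$ is nonzero precisely when $X^{\alpha_i}$ is an $n$-manifold on which the $\Gamma$-stabilizer of $\alpha_i$ acts cocompactly, which is exactly the condition that $C_{\Gamma}(\alpha_i)$ is a cocompact lattice in $C_G(\alpha_i)$.

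Now fix a class $c \in \Ho_c^n(X_{sing})$. Its support meets only finitely many top-dimensional strata, say $X^{\alpha_1}, \dots, X^{\alpha_r}$, each satisfying the cocompactness hypothesis; write $c = \sum_i c_i$ with $c_i \in \Ho_c^n(X^{\alpha_i})$. For each fixed $i$, apply hypothesis 3 (after permuting $\alpha_i$ into the first position) to obtain a rational flat $F_i$ meeting $X^{\alpha_i}$ transversally in a single point and disjoint from $X^{\alpha_j}$ for $j \neq i$. Since $\dim F_i = \rk_\mathbb{Q}(\Gamma) = \dim S - n$ by Theorem~\ref{Theoreme Borel-Serre}, rational flats have codimension complementary to the top-dimensional strata and admit Poincar\'e--Lefschetz duals $[F_i]^\vee \in \Ho_c^n(X)$. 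By transversality, the restriction of $[F_i]^\vee$ to $\Ho_c^n(X_{sing})$ hits a generator of $\Ho_c^n(X^{\alpha_i})$ and vanishes on every other $\Ho_c^n(X^{\alpha_j})$, so a suitable integer combination of the $[F_i]^\vee$ is a preimage of $c$. This establishes surjectivity and, by the earlier proposition, completes the proof. The main obstacle I anticipate is the careful verification that a rational flat genuinely defines a well-behaved compactly supported dual class on the Borel--Serre bordification and that its pairing with the cocompact top-dimensional strata is computed by the naive geometric intersection number; the argument should follow the lines of \cite[Cor.~3.7]{Aramayona}.
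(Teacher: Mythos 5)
The paper does not give its own proof of Lemma~\ref{Lemme secours}: the statement is cited directly from \cite[Cor.~3.7]{Aramayona} with no further argument, so there is no in-paper proof to compare against. That said, your reconstruction follows the strategy of the cited source: reduce to the surjectivity of $\Ho_c^n(X)\to \Ho_c^n(X_{sing})$ demanded by the preceding proposition (i.e.\ \cite[Prop.~3.3]{Aramayona}), decompose $\Ho_c^n(X_{sing})$ as a sum over top-dimensional strata using the dimension-$(n-2)$ bound on pairwise intersections, recognize the strata carrying top-degree classes via the cocompactness condition in hypothesis~3, and produce preimages by pairing against the rational flats guaranteed there. The steps you correctly flag as needing care---that a rational flat in $S$ produces a well-defined compactly supported class on the Borel--Serre bordification $X$, and that its restriction to $X_{sing}$ is computed by a geometric intersection number---are precisely the technical content of the proof in \cite{Aramayona}. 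Two cautions worth naming: your Mayer--Vietoris argument must account for the fact that the covering of $X_{sing}$ by fixed-point sets is infinite (finite only modulo $\Gamma$), which is handled by working $\Gamma$-equivariantly; and you invoke transversality of $F$ with $S^{\alpha_1}$, which hypothesis~3 does not literally supply (it only says the intersection is a single point), so a small extra argument is required there to get the intersection number right.
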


We refer to \cite{Luck-Meintrup} and \cite{Degrijse} for other facts about Bredon cohomology.

\section{Complex simple Lie algebras}

In this section we prove the Main Theorem for all complex simple Lie algebras:

\begin{prop}\label{Prop groupe complexe}
Let $\mathfrak{g}$ be a complex simple Lie algebra, $G=\Aut(\mathfrak{g})$ its group of automorphisms and $S$ the associated Riemannian symmetric space. We assume that $\rk_{\mathbb{R}} G \geqslant 2$. Then
\[ \dim S^{\alpha} < \dim S- \rk_{\mathbb{R}} G \]
for every $\alpha \in G$ of finite order and non central. In particular
\[ \underline{\gd}(\Gamma)=\vcd(\Gamma) \]
for every lattice $\Gamma \subset G$.
\end{prop}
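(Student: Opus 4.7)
The plan is to reduce the equality $\underline{\gd}(\Gamma)=\vcd(\Gamma)$ to the claimed dimensional inequality via Lemma \ref{Lemme clé}, and then to prove that inequality by separating the inner, outer-modulo-inner, and pure outer-involution contributions. First, if $\Gamma$ is uniform then $S$ is already a cocompact model for $\underline{E}\Gamma$ of dimension $\vcd(\Gamma)=\dim S$, so the conclusion is immediate; we may therefore assume $\Gamma$ is non-uniform. Since $G=\Aut(\mathfrak{g})$ is almost simple and $\rk_{\mathbb{R}} G \geqslant 2$, Margulis' theorem gives that $\Gamma$ is arithmetic, and the Borel--Serre theorem yields $\vcd(\Gamma) \geqslant \dim S - \rk_{\mathbb{R}} G$. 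Thus the bound $\dim S^{\alpha} < \dim S - \rk_{\mathbb{R}} G$ for every non-central finite-order $\alpha$ implies the hypothesis of Lemma \ref{Lemme clé}, and combined with the Lück--Meintrup theorem this gives $\underline{\gd}(\Gamma)=\underline{\cd}(\Gamma)=\vcd(\Gamma)$.

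For the dimensional inequality, take $\alpha \in G$ of finite order and project it to $\Out(\mathfrak{g})$. From Table 1, with the unique exception of the triality factor $\mathcal{S}_3$ in $\Out(\mathfrak{so}(8,\mathbb{C}))$, every non-trivial element of $\Out(\mathfrak{g})$ has order $2$, so $\alpha^2$ is inner. If $\alpha^2 \neq 1$, then $\alpha^2=\Ad(A)$ is a non-trivial inner automorphism of finite order; since $S^{\alpha} \subseteq S^{\alpha^2}$, the bound reduces to the inner case $\dim S^{A}<\dim S - \rk_{\mathbb{R}} G$, which is the content of the inner-automorphism analysis of Section 6.1 of \cite{Aramayona} and can be invoked as a black box. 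Similarly, if $\alpha$ itself is inner and non-trivial, the same black box applies. The triality contribution in $\mathfrak{so}(8,\mathbb{C})$ is handled analogously by passing to $\alpha^3$ (which lies in the order-$2$ part of $\Out$), and the isolated pure order-$3$ case is treated directly using that the fixed subalgebra of a triality element is $\mathfrak{g}_2^{\mathbb{C}}$.

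The remaining and principal new case is when $\alpha$ is a non-inner involution. Then $\alpha$ descends to an involution of the adjoint group $G_{ad}$, and $S^{\alpha}$ is the Riemannian symmetric space associated to $G_{ad}^{\alpha}$; by Berger's classification the Lie algebra $\mathfrak{h}=\mathrm{Lie}(G_{ad}^{\alpha})$ is either compact or is one of the isotropy subalgebras appearing in the first row of Tables 2--9. One then has $\dim S^{\alpha}=\dim \mathfrak{h}-\dim \mathfrak{k}_{\mathfrak{h}}$ with $\mathfrak{k}_{\mathfrak{h}}$ a maximal compact subalgebra of $\mathfrak{h}$, while $\dim S - \rk_{\mathbb{R}} G$ for complex $\mathfrak{g}$ simplifies to $\dim_{\mathbb{C}} \mathfrak{g} - \rk_{\mathbb{C}} \mathfrak{g}$. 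The plan is then to run through each entry of the first row in Tables 2--9 and verify the strict inequality by a direct numerical comparison. I expect the main obstacle to be the bookkeeping in the split real form cases (for instance $\mathfrak{sl}(n,\mathbb{R}) \subset \mathfrak{sl}(n,\mathbb{C})$ or $\mathfrak{sp}(2n,\mathbb{R}) \subset \mathfrak{sp}(2n,\mathbb{C})$), where $\dim_{\mathbb{R}}\mathfrak{h}=\dim_{\mathbb{C}} \mathfrak{g}$ and strictness hinges on the compact part of $\mathfrak{h}$ being strictly larger in dimension than $\rk_{\mathbb{C}} \mathfrak{g}$; these are the tightest cases and must be checked table by table.
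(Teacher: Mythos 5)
Your reduction via Lemma \ref{Lemme clé} and Borel--Serre, and your treatment of the outer-involution case via Berger's classification, match the structure of the paper's argument. There is, however, a genuine gap in your handling of the inner case. You write that the inner bound $\dim S^A < \dim S - \rk_{\mathbb{R}} G$ for non-central finite-order $A \in G_{ad}$ ``is the content of the inner-automorphism analysis of Section 6.1 of \cite{Aramayona} and can be invoked as a black box,'' and you extend the same claim to the case where $\alpha$ itself is a non-trivial inner automorphism. This is false for the exceptional algebras. The results in Sections 6.1--6.3 of \cite{Aramayona} only establish the inner bound for the \emph{classical} simple complex Lie groups $\SL(n,\mathbb{C})$, $\SO(n,\mathbb{C})$, $\Sp(2n,\mathbb{C})$; they do not address $G_2^{\mathbb{C}}$, $F_4^{\mathbb{C}}$, $E_6^{\mathbb{C}}$, $E_7^{\mathbb{C}}$, or $E_8^{\mathbb{C}}$ at all, and this is exactly the new content that the present paper has to supply. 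The paper closes this gap with Lemma \ref{Méthode groupe complexe}: by exhibiting, for each exceptional complex $G_{ad}$, a subgroup $H \subset K$ isogenous to a suitable classical group (\,$\SO(4)$, $\SO(9)$, $\U(1)\times\SO(10)$, $\SU(8)$, $\SO(16)$; see Table 10\,) with $\rk H = \rk K$, $K/H$ irreducible symmetric, and $\dim H < \dim K - \rk_{\mathbb{R}} G$, one conjugates $A$ into $H$ and uses the explicit centralizer bounds \eqref{Equation 1 complexe}--\eqref{Equation 2 complexe} in $\SO(n)$ and $\SU(n)$ together with the complex--compact duality $\dim S^A = \dim C_K(A)$, $\dim S = \dim K$. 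Without such an argument (or an equivalent direct bound on centralizers in the exceptional groups themselves), the inner case for exceptional $\mathfrak{g}$ is unproved, and hence so is the proposition for those algebras.

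Two lesser remarks. First, your treatment of $\mathfrak{so}(8,\mathbb{C})$ is incomplete: besides inner order-3 automorphisms and those conjugate to triality $\tau$ with fixed subalgebra $\mathfrak{g}_2^{\mathbb{C}}$, the Gray--Wolf classification of order-3 complex automorphisms also produces a class $\tau'$ with fixed subalgebra $\mathfrak{sl}(3,\mathbb{C})$, which must be checked as well; and the order-$6$ outer elements (triality composed with complex conjugation) need to be reduced by passing to $\alpha^3$, as the paper does. Second, you invoke Margulis arithmeticity to justify Borel--Serre; the paper leaves this implicit (any lattice in $\Aut(\mathfrak{g})$ with $\mathfrak{g}$ simple is irreducible, and the rank assumption then gives arithmeticity), so this is not a discrepancy, just an extra sentence of justification.
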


Recall that the adjoint group $G_{ad}$ is the identity component of $G=\Aut(\mathfrak{g})$. It agrees with the group of inner automorphisms, that is $G_{ad}=\Inn(\mathfrak{g})$. Note that $G_{ad}$ is centerfree has the same dimension and real rank as $G$, and their associated Riemannian symmetric spaces agree. The quotient $\Aut(\mathfrak{g})/\Inn(\mathfrak{g})$ is the group of outer automorphism $\Out(\mathfrak{g})$ and can be realized as a subgroup of $\Aut(\mathfrak{g})$. The group $\Aut(\mathfrak{g})$ is then the semi-direct product of $\Inn(\mathfrak{g})$ and $\Out(\mathfrak{g})$ (see \cite{Gundogane}). Recall also that if $A \in G_{ad}$, $S^{A}$ is the fixed point set of the inner automorphism $\Ad(A)$.

For further use, note that if $\rho \in \Aut(\mathfrak{g})$ is an involution, then it is induced by an involution on $G_{ad}$ that will be still denoted $\rho$. The group of its fixed points $G_{ad}^{\rho}$ has Lie algebra $\mathfrak{g}^{\rho}$ and the fixed point set $S^{\rho}$ is the associated Riemannian symmetric space. In particular, $\dim S^{\rho}=0$ if $G_{ad}^{\rho}$ is compact.

The proof of Proposition \ref{Prop groupe complexe} relies on the following lemmas:

\begin{lemma}\label{Méthode groupe complexe 2}
Let $\mathfrak{g}$ be a semisimple Lie algebra such that every element of $\Out(\mathfrak{g})$ has order at most 2. Let $G$ be the group of automorphisms of $\mathfrak{g}$ and let $S$ be the associated Riemannian symmetric space. If 
\begin{equation} \dim S^A < \dim S- \rk_{\mathbb{R}} G \text{ and } \dim S^{\rho} < \dim S- \rk_{\mathbb{R}}G
\label{Condition principale}
\end{equation}
for all $A \in G_{ad}$ non trivial of finite order, and for all involutions $\rho \in G$, then we also have
\[ \dim S^{\alpha} < \dim S- \rk_{\mathbb{R}} G \]
for every $\alpha \in G$ of finite order and non central.
\end{lemma}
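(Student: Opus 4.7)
My plan is to reduce an arbitrary non-central finite-order $\alpha \in G$ to one of the two situations already covered by the hypothesis \eqref{Condition principale}. The leverage is the standing assumption that every element of $\Out(\mathfrak{g}) = G/\Inn(\mathfrak{g})$ has order at most two. Under this assumption the image of $\alpha$ in $\Out(\mathfrak{g})$ has order $1$ or $2$, so $\alpha^2$ always lies in $\Inn(\mathfrak{g}) = G_{ad}$, regardless of whether the extension $1 \to \Inn(\mathfrak{g}) \to G \to \Out(\mathfrak{g}) \to 1$ splits. This is the only structural fact about $G$ that I will need.

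From there I split into two cases. First, if $\alpha$ already lies in $G_{ad}$, then since $\alpha$ is non-central and $G_{ad}$ is centerless, $\alpha$ is a non-trivial inner automorphism of finite order, and the first inequality of \eqref{Condition principale} applied to $A = \alpha$ yields the conclusion directly. Otherwise $\alpha \notin G_{ad}$, and I distinguish two sub-cases. If $\alpha^2 \neq 1$, then $\alpha^2$ is a non-trivial inner automorphism of finite order, and since $S^{\alpha} \subseteq S^{\alpha^2}$ the first inequality of \eqref{Condition principale} applied to $A = \alpha^2$ gives $\dim S^{\alpha} \leqslant \dim S^{\alpha^2} < \dim S - \rk_{\mathbb{R}} G$. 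If on the other hand $\alpha^2 = 1$, then $\alpha$ is an involution; it is non-trivial because its image in $\Out(\mathfrak{g})$ is non-trivial, so the second inequality of \eqref{Condition principale} applies verbatim.

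There is no genuine obstacle in this argument: it is a bookkeeping reduction whose only ingredient is the order-two hypothesis on $\Out(\mathfrak{g})$ together with the trivial inclusion $S^{\alpha} \subseteq S^{\alpha^2}$. The substantive content of Proposition \ref{Prop groupe complexe} does not lie here, but rather in verifying \eqref{Condition principale} itself for each complex simple Lie algebra of real rank at least two, which is the work of the remainder of the section.
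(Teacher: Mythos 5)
Your proof is correct and follows essentially the same reduction as the paper: pass to $\alpha^2$, which lands in $\Inn(\mathfrak{g})=G_{ad}$ because $\Out(\mathfrak{g})$ has exponent at most $2$, use $S^{\alpha}\subseteq S^{\alpha^2}$ when $\alpha^2\neq 1$, and fall back to the involution hypothesis when $\alpha^2=1$. The only (minor, but welcome) difference is that you argue directly from the quotient $G/\Inn(\mathfrak{g})$, whereas the paper writes $\alpha=\Ad(A)\circ\rho$ with $\rho\in\Out(\mathfrak{g})\subset\Aut(\mathfrak{g})$ and computes $\alpha^2=\Ad(A\rho(A))$, implicitly relying on the semidirect-product splitting $\Aut(\mathfrak{g})=\Inn(\mathfrak{g})\rtimes\Out(\mathfrak{g})$, which the paper establishes for simple $\mathfrak{g}$ while the lemma is stated for semisimple $\mathfrak{g}$; your formulation sidesteps that point cleanly.
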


\begin{proof}
Every element $\alpha \in \Aut(\mathfrak{g})$ is of the form $\Ad(A) \circ \rho$ where $A \in G_{ad}$ and $\rho \in \Out(\mathfrak{g}) \subset \Aut(\mathfrak{g})$. We know that $\rho$ is of order at most 2 by hypothesis. Then $\alpha^2=\Ad(A\rho(A))$ is an inner automorphism and we have the inclusion $S^{\alpha} \subset S^{\alpha^2}=S^{A\rho(A)}$. So, if $A\rho(A)$ is not central in $G_{ad}$, then we have
\[ \dim S^{\alpha} \leqslant \dim S^{A\rho(A)} < \dim S- \rk_{\mathbb{R}} G. \]
Note now that if $A\rho(A)$ is central, then it is actually the identity because $G_{ad}$ is centerfree. This means that $\alpha^2=Id$. In other words, $\alpha$ is an involution, and we again have
\[ \dim S^{\alpha} < \dim S- \rk_{\mathbb{R}} G \]
by assumption. We have proved the claim.
\end{proof}

To check the first part of \eqref{Condition principale} we will use the following:

\begin{lemma}\label{Méthode groupe complexe}
Let $G$ be the group of complex points of a semisimple connected algebraic group and $K \subset G$ a maximal compact subgroup. Suppose that there exists a group $H$ isogenous to a subgroup $H'$ of $K$ such that $K/H'$ is an irreducible symmetric space, $\rk K=\rk H$, $\dim H < \dim K - \rk_{\mathbb{R}} G$, and satisfying 
\[ \dim C_H(A) < \dim H - \rk_{\mathbb{R}} G \]
for all $A \in H$ of finite order and non central.
Then we have 
\[ \dim S^A < \dim S- \rk_{\mathbb{R}} G \]
for every $A \in G$ of finite order and non central.
\end{lemma}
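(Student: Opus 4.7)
The plan is to reduce the entire problem to a computation inside the maximal compact $K$, exploiting the fact that $G$ is a complex group. First, since $A$ has finite order it is semisimple, so after conjugating in $G$ I may assume $A \in K$. All maximal tori of $K$ are $K$-conjugate, and since $\rk H = \rk H' = \rk K$ a maximal torus of $H'$ is also maximal in $K$. Hence a further $K$-conjugation places $A$ in $H'$, without changing $\dim S^A$.

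Next I would exploit the complex structure. Because $G$ is complex we have $\mathfrak{g} = \mathfrak{k} \oplus i\mathfrak{k}$, and one checks quickly that $\dim S = \dim K$ and $\rk_{\mathbb{R}} G = \rk K$. Moreover, $\Ad(A)$ is $\mathbb{C}$-linear, so $\mathfrak{c}_{\mathfrak{g}}(A) = \mathfrak{c}_{\mathfrak{k}}(A) \otimes_{\mathbb{R}} \mathbb{C}$, which gives $\dim C_G(A) = 2\dim C_K(A)$. The formula $\dim S^A = \dim C_G(A) - \dim C_K(A)$ therefore collapses to $\dim S^A = \dim C_K(A)$, and the desired inequality becomes $\dim C_K(A) < \dim K - \rk K$.

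Using the Cartan decomposition $\mathfrak{k} = \mathfrak{h}' \oplus \mathfrak{m}$ of the compact symmetric pair $(K, H')$, both summands are $\Ad(A)$-stable since $A \in H'$, and $\dim C_K(A) = \dim C_{H'}(A) + \dim \mathfrak{m}^A$. I would then split into two cases. If $A$ is not central in $H'$, the assumption on $H$ (transferred through the isogeny $H \sim H'$, which preserves Lie algebras and hence centralizer dimensions) gives $\dim C_{H'}(A) < \dim H - \rk K$; combined with the trivial bound $\dim \mathfrak{m}^A \leq \dim \mathfrak{m} = \dim K - \dim H$ this yields $\dim C_K(A) < \dim K - \rk K$ immediately.

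The delicate case, which I expect to be the main obstacle, is when $A$ is central in $H'$ but non-central in $G$ (hence in $K$): then $\dim C_{H'}(A) = \dim H$ and I cannot afford any fixed vectors in $\mathfrak{m}$. Here I would invoke the irreducibility of $K/H'$: the isotropy representation of $H'$ on $\mathfrak{m}$ is $\mathbb{R}$-irreducible, so by Schur's lemma the commutant is a division algebra over $\mathbb{R}$ (one of $\mathbb{R}$, $\mathbb{C}$, $\mathbb{H}$), and $A$, lying in $Z(H')$, acts on $\mathfrak{m}$ by a scalar in this commutant. Because $A \notin Z(K)$ that scalar is nontrivial, hence invertible as an element of a division algebra, so $\mathfrak{m}^A = 0$. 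Then $\dim C_K(A) = \dim H < \dim K - \rk K$ by the hypothesis on $\dim H$, closing the argument.
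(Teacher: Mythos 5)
Your proof is correct and follows the paper's overall structure (conjugate $A$ into $H'$ using the equal ranks, reduce to $\dim C_K(A) < \dim K - \rk_{\mathbb{R}} G$ via the complex doubling $\dim C_G(A) = 2\dim C_K(A)$, and split on whether $A$ is central in $H'$). The one place you genuinely diverge is the central case. The paper argues group-theoretically: since $K/H'$ is irreducible, the identity component of $H'$ is a \emph{maximal connected subgroup} of $K$, and since $H' \subset C_K(A) \subsetneq K$ this forces $\dim C_K(A) = \dim H'$. You instead argue representation-theoretically: the isotropy representation of $H'$ on $\mathfrak{m}$ in the Cartan decomposition $\mathfrak{k} = \mathfrak{h}' \oplus \mathfrak{m}$ is irreducible, so by Schur's lemma the central element $A$ acts on $\mathfrak{m}$ by a unit in a real division algebra, and nontriviality of that unit (forced by $A \notin Z(K)$) gives $\mathfrak{m}^A = 0$, hence $\dim C_K(A) = \dim H'$. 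Both are legitimate encodings of irreducibility of the symmetric pair; yours gives the slightly sharper conclusion $\mathfrak{m}^A = 0$ directly, whereas the paper's maximality argument reaches the same dimension count without passing through the isotropy representation. One small point worth making explicit, which the paper does and you leave implicit: $K$ is connected (true because $G$ is the complex points of a connected group), which is what lets you put the finite-order element $A$ inside a maximal torus of $K$ in the first place.
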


\begin{proof}
As all maximal compact subgroups are conjugated, we can conjugate such an $A \in G$ into $K$. Since $K$ is connected,  $A$ is then contained in a maximal torus. Since all maximal tori are conjugated, we can conjugate $A$ into any one of them. Since the subgroup $H'$ has the same rank as $K$, a maximal torus in $H'$ is also maximal in $K$. We can then assume, up to replacing $A$ by a conjugate element, that $A \in H'$. 

Taking now into account that $G$ is the group of complex points of a reductive algebraic group, $C_G(A)$ is the complexification of its maximal compact subgroup $C_K(A)$ and then its dimension is twice that of $C_K(A)$. As a result we get
\[ \dim S^A=\dim C_G(A)-\dim C_K(A)=\dim C_K(A) \]
because $S^A \simeq C_G(A)/C_K(A)$ as seen in section 2.5.

Similarly we have
\[ \dim S=\dim(G/K)=\dim K.\]

In particular, the claim follows once we show that
\[ \dim C_K(A) < \dim K -\rk_{\mathbb{R}} G. \]

Now, as $H$ and $H'$ are isogenous, assume for simplicity that $H=H'/F$ with $F$ a finite normal subgroup of $H'$. We denote by $\bar{A}$ the class of $A$ in $H'/F$. As $F$ is finite, we have
\[ \dim C_H(\bar{A})=\dim C_{H'}(A) \]
and in particular, $A$ is central in $H'$ if and only if $\bar{A}$ is central in $H$.

Suppose for a moment that $A$ is non central in $H'$, then we write
\[ \dim C_K(A) \leqslant \dim C_{H'}(A) + \dim K - \dim H= \dim C_H(\bar{A})+\dim K-\dim H \]
and by assumption we have
\[ \dim C_H(\bar{A}) < \dim H - \rk_{\mathbb{R}} G \]
and finally
\[ \dim C_K(A) < \dim K - \rk_{\mathbb{R}} G. \]

It remains to treat the case that $A$ is central in $H'$ but not in $K$, that is $H' \subset C_K(A) \subsetneq K$. Since the symmetric space $K/H'$ is irreducible, it follows that the identity component of $H'$ is a maximal connected Lie group of $K$, so $\dim C_K(A)=\dim H'=\dim H$, and we have
\[ \dim H  < \dim K - \rk_{\mathbb{R}} G\]
by assumption.
\end{proof}

In the course of the proof of Proposition \ref{Prop groupe complexe}, the subgroups $H$ will all be classical groups of the forms $\SO(n)$ or $\SU(n)$ and we will need the following bounds for the dimension of centralizers in those groups (see \cite{Aramayona} section 5)

1. Let $A \in \SO(n)$ ($n\geqslant 3$) of finite order and non central, then
\begin{equation}
\label{Equation 1 complexe}
 \dim C_{\SO(n)}(A) \leqslant \frac{(n-1)(n-2)}{2}.
\end{equation}

2. Let $A  \in \SU(n)$ ($n \geqslant 2$) of finite order and non central, then
\begin{equation}
\label{Equation 2 complexe}
 C_{\SU(n)}(A) \leqslant (n-1)^2.
\end{equation}

For more simplicity we will sometimes consider $H$ as a subgroup of $K$ and denote $K/H$ the symmetric space $K/H'$.
For the convenience of the reader, we summarize in the following table the informations we need to prove Proposition \ref{Prop groupe complexe} for exceptional Lie algebras.

\begin{center}
\begin{tabular}{|*{7}{c|}}
\hline
$G_{ad}$ & $K$ & $H$ & $\dim K$ & $\dim H$ & $\rk H=\rk K$ & $\rk_{\mathbb{R}} G_{ad}$ \\
\hline
$G_2^{\mathbb{C}}$ & $G_2$ & $\SO(4)$ & 14 & 6 & 2 & 2 \\
\hline
$F_4^{\mathbb{C}}$ & $F_4$ & $\SO(9)$ & 52 & 36 & 4 & 4 \\
\hline
$E_6^{\mathbb{C}}$ & $E_6$ &  $\U(1)\times \SO(10)$ & 78 & 46 & 6 & 6 \\
\hline
$E_7^{\mathbb{C}}$ & $E_7$ &  $\SU(8)$ & 133 & 63 & 7 & 7 \\
\hline
$E_8^{\mathbb{C}}$ & $E_8$ &  $\SO(16)$ & 248 & 120 & 8 & 8 \\
\hline
\end{tabular}
\captionof{table}{Exceptional complex simple centerless Lie groups $G_{ad}$, maximal compact subgroups $K$, classical subgroups $H$, dimensions and ranks.}
\end{center}

We are now ready to launch the proof of Proposition \ref{Prop groupe complexe}.

\begin{proof}[Proof of Proposition \ref{Prop groupe complexe}]

The second claim follows from Lemma \ref{Lemme clé} because we have
\[ \vcd(\Gamma) \geqslant \dim S-\rk_{\mathbb{R}} G \]
for every lattice $\Gamma \subset G$  by Theorem \ref{Theoreme Borel-Serre}. So it suffices to prove the first claim.

Recall that every complex simple Lie algebra is isomorphic to either one of the classical algebras $\mathfrak{sl}(n,\mathbb{C})$, $\mathfrak{so}(n,\mathbb{C})$ and $\mathfrak{sp}(2n,\mathbb{C})$ (with conditions on $n$ to ensure simplicity), or to one of the 5 exceptional ones: $\mathfrak{g}_2^{\mathbb{C}}$, $\mathfrak{f}_4^{\mathbb{C}}$, $\mathfrak{e}_6^{\mathbb{C}}$, $\mathfrak{e}_7^{\mathbb{C}}$ and $\mathfrak{e}_8^{\mathbb{C}}$. 

To prove Proposition \ref{Prop groupe complexe} we will consider all these cases individually.

\vspace{0.5cm}

\textbf{Classical complex simple Lie algebras}:

Let $\mathfrak{g}$ be a classical complex simple Lie algebra and $\Gamma \subset G=\Aut(\mathfrak{g})$ a lattice. From a brief inspection of the table in section 2.3 we obtain that, unless $\mathfrak{g}=\mathfrak{so}(8,\mathbb{C})$, every outer automorphism of $\mathfrak{g}$ has order 2. We will assume that $\mathfrak{g} \neq \mathfrak{so}(8,\mathbb{C})$ for a while, treating this case later.

To begin with note that we get from parts 6.1, 6.2 and 6.3 of \cite{Aramayona} that
\[ \dim S^A < \dim S - \rk_{\mathbb{R}} G \]
for all $A \in G_{ad}$ non trivial and of finite order. In other words, the first part of condition \eqref{Condition principale} in Lemma \ref{Méthode groupe complexe 2} holds. To check the second part we make use of the classification of local symmetric spaces in \cite{Berger}. For instance if $\mathfrak{g}=\mathfrak{sl}(n,\mathbb{C})$ with $n \geqslant 3$ because of our assumption on the rank, and if $\rho \in G$ is an involution, then the Lie algebra $\mathfrak{g}^{\rho}$ is isomorphic to either a Lie algebra whose adjoint group is compact or to one of the follows: $\mathfrak{so}(n,\mathbb{C})$, $\mathfrak{s(gl(}k,\mathbb{C}) \oplus \mathfrak{gl}(n-k,\mathbb{C}))$, $\mathfrak{sp}(n,\mathbb{C})$, $\mathfrak{sl}(n,\mathbb{R})$,  $\mathfrak{su}(p,n-p)$ and $\mathfrak{sl}(\frac{n}{2},\mathbb{H})$, where $\mathfrak{sp}(n,\mathbb{C})$ and $\mathfrak{sl}(\frac{n}{2},\mathbb{H})$ only appear if $n$ is even. The associated Riemannian symmetric space $S^{\rho}$ is obtained by taking the quotient of the adjoint group by a maximal compact subgroup, for example in the case of $\mathfrak{so}(n,\mathbb{C})$ it is $\PSO(n,\mathbb{C})/\PSO_n$. The Lie algebra $\mathfrak{g}^{\rho}$ for which $\dim S^{\rho}$ is maximal is $\mathfrak{s(gl(}1,\mathbb{C}) \oplus \mathfrak{gl}(n-1,\mathbb{C}))$ for $n\neq 4$ (where $\dim S^{\rho}=(n-1)^2$), and $\mathfrak{sp}(4,\mathbb{C})$ for $n=4$ (where $\dim S^{\rho}=10$). In all these cases we have
\[ \dim S^{\rho} < (n^2-1)-(n-1)= \dim S - \rk_{\mathbb{R}} G. \]
We get then by Lemma \ref{Méthode groupe complexe 2} that the first claim of Proposition \ref{Prop groupe complexe} holds. The cases of $\mathfrak{sp}(2n,\mathbb{C})$ and $\mathfrak{so}(n,\mathbb{C})$ for $n\neq 8$ are similar, we leave the details to the reader.

Now we treat the case of the Lie algebra  $\mathfrak{so}(8,\mathbb{C})$. Its group of complex outer automorphisms is isomorphic to the symmetric group $S_3$ and contains an order 3 element $\tau$ called triality. (See section 1.14 in \cite{Yokota} for an interpretation of triality in terms of octonions.) The group $\Out(\mathfrak{so}(8,\mathbb{C}))$ of real outer automorphisms is then isomorphic to $S_3 \times \mathbb{Z}/2\mathbb{Z}$ where the second factor corresponds to complex conjugation. Consequently the only order 3 outer automorphisms are $\tau$ and $\tau^{-1}$ and those of order 6 are their compositions with complex conjugation. If $\rho \in \Aut(\mathfrak{so}(8,\mathbb{C}))$ is of order 6 and $\alpha=\Ad(A) \circ \rho$, then $\alpha^3$ is the composition of an inner automorphism and $\rho^3$. As $\rho^3$ is of order 2 and we have the inclusions $S^{\alpha} \subset S^{\alpha^3}$, we can consider $\alpha^3$ instead of $\alpha$ and we just have to treat the cases when $\rho$ is of order 2 or 3.

If $\rho$ is of order 2 we apply the same method than for other classical simple Lie algebras, using the classification of local symmetric spaces. It remains to treat the case when $\rho=\tau$ the triality automorphism (or its inverse). In this case $\alpha=\Ad(A) \circ \tau$ is a complex automorphism. Proceeding like in the proof of Lemma \ref{Méthode groupe complexe 2}, $\alpha^3$ is an inner automorphism and the result follows if it is non trivial. If $\alpha^3=1$ then $\alpha$ belongs to the set $\Aut_{\mathbb{C}}^3(\mathfrak{so}(8,\mathbb{C}))$ of complex automorphisms of order 3. A result of Gray and Wolf (see \cite[Thm. 5.5]{Gray-Wolf}) says that if $\sim_i$ is the equivalence relation of conjugation by an inner automorphism in $\Aut_{\mathbb{C}}^3(\mathfrak{so}(8,\mathbb{C}))$, then $\Aut_{\mathbb{C}}^3(\mathfrak{so}(8,\mathbb{C}))/\sim_i$ contains, besides the classes of inner automorphisms, four other classes: those of $\tau$ and $\tau^{-1}$ and two others represented by order 3 automorphisms $\tau'$ and $\tau'^{-1}$. The Lie algebra of the fixed point set of triality is the exceptional Lie algebra $\mathfrak{g}_2^{\mathbb{C}}$, and those of the fixed point set of $\tau'$ is isomorphic to the Lie algebra $\mathfrak{sl}(3,\mathbb{C})$. In both cases we have
\[ \dim S^{\alpha} < \dim S - \rk_{\mathbb{R}} G \]
and Proposition \ref{Prop groupe complexe} holds for $\mathfrak{g}=\mathfrak{so}(8,\mathbb{C})$.

\vspace{0.5cm}

\textbf{Lie algebra $\mathfrak{g}_2^{\mathbb{C}}$:}

Here we consider the simple exceptional Lie algebra $\mathfrak{g}=\mathfrak{g}_2^{\mathbb{C}}$. Its outer automorphism group is of order 2 and its adjoint group is $G_{ad}=G_2^{\mathbb{C}}$, a connected algebraic group of real rank 2 and complex dimension 14. The compact group $G_2$ is a maximal compact subgroup of $G_2^{\mathbb{C}}$.

The group $G_2$ contains a subgroup $H$ isomorphic to $\SO(4)$, fixed by an involution of $G_2$ which extends to a conjugation of $G_2^{\mathbb{C}}$, giving the split real form $G_{2(2)}$ (see section 1.10 of \cite{Yokota} for an explicit description). Then $G_2/\SO(4)$ is an irreducible symmetric space, $\rk(G_2)=\rk(\SO(4))=2$ and 
  \[ \dim H+\rk_{\mathbb{R}} G_{ad}=6+2=8 < 14=\dim K. \]
Moreover if $A \in H$ is of finite order and non central in $H \simeq \SO(4)$ we have by inequality \eqref{Equation 1 complexe}
\[ \dim C_H(A)+\rk_{\mathbb{R}}G_{ad} \leqslant 3+2=5 < 6=\dim H. \]

So by Lemma \ref{Méthode groupe complexe}, the first part of \eqref{Condition principale} in Lemma \ref{Méthode groupe complexe 2} holds. 

To check the second part we have to list the local symmetric spaces associated to an involution $\rho \in \Aut(\mathfrak{g}_2^{\mathbb{C}})$. By the classification of Berger in \cite{Berger}, the only non compact cases are when $\mathfrak{g}^{\rho}$ is isomorphic to $\mathfrak{sl}(2,\mathbb{C}) \oplus \mathfrak{sl}(2,\mathbb{C})$ or $\mathfrak{g}_{2(2)}$. The associated Riemannian symmetric spaces are $S^{\rho}=(\PSL(2,\mathbb{C}) \times \PSL(2,\mathbb{C})) / (\PSU_2 \times \PSU_2)$ and $G_{2(2)}/\SO(4)$ and we have in both cases
\[ \dim S^{\rho} < \dim S- \rk_{\mathbb{R}} G. \]

So by Lemma \ref{Méthode groupe complexe 2}, Proposition \ref{Prop groupe complexe} holds for $G=\Aut(\mathfrak{g}_2^{\mathbb{C}})$.

\vspace{0.5 cm}

\textbf{Lie algebra $\mathfrak{f}_4^{\mathbb{C}}$:}

We proceed like previously for the simple Lie algebra $\mathfrak{g}=\mathfrak{f}_4^{\mathbb{C}}$, with $|\Out(\mathfrak{f}_4^{\mathbb{C}})|=2$, $G_{ad}=F_4^{\mathbb{C}}$ of maximal compact subgroup $K=F_4$, $\rk_{\mathbb{R}}G_{ad}=4$ and $\dim K=52$. We know that there exists a subgroup $H \subset K$ isogenous to $\SO(9)$, with $\rk H=\rk K=4$, and such that $K/H$ is an irreducible symmetric space. In addition to that
 \[  \dim H+\rk_{\mathbb{R}} G_{ad}=36+4=40 < 52=\dim K \]
and if $A \in H$ is of finite order and non central in $H$, we have 
\[ \dim C_H(A)+\rk_{\mathbb{R}}G_{ad} \leqslant 28+4=32 <36= \dim H \]
by inequality \eqref{Equation 1 complexe}.

So by Lemma \ref{Méthode groupe complexe}, the first part of \eqref{Condition principale} in Lemma \ref{Méthode groupe complexe 2} holds. 

Then by the classification of local symmetric spaces, the ones we have to study is those when $\mathfrak{g}^{\rho}$ is isomorphic to $\mathfrak{sp}(6,\mathbb{C}) \oplus \mathfrak{sp}(2,\mathbb{C})$, $\mathfrak{so}(9,\mathbb{C})$,  $\mathfrak{f}_{4(4)}$ or $\mathfrak{f}_{4(-20)}$, in all cases
\[ \dim S^{\rho} < \dim S- \rk_{\mathbb{R}} G. \]

So by Lemma \ref{Méthode groupe complexe 2}, Proposition \ref{Prop groupe complexe} holds for $G=\Aut(\mathfrak{f}_4^{\mathbb{C}})$.

\vspace{0.5 cm}

\textbf{Lie algebra $\mathfrak{e}_6^{\mathbb{C}}$:}

For the algebra $\mathfrak{g}=\mathfrak{e}_6^{\mathbb{C}}$, the outer automorphism group is a product of two groups of order 2, $G_{ad}=E_6^{\mathbb{C}}$ of maximal compact subgroup $K=E_6$, $\rk_{\mathbb{R}}G_{ad}=6$ and $\dim K=78$. We know there exists $H \subset K$ isogenous to $\U(1)\times \SO(10)$, with $\rk H=\rk K=6$, $K/H$ is an irreducible symmetric space and we have the following
 \[ \dim H+\rk_{\mathbb{R}} G_{ad}=46+6=52 < 78=\dim K \]
and if $A \in H$ is of finite order and non central in $H$, by inequality \eqref{Equation 1 complexe} we have 
\[ \dim C_H(A)+\rk_{\mathbb{R}}G_{ad} \leqslant 1+36+6=43 <46= \dim H. \]

So by Lemma \ref{Méthode groupe complexe}, the first part of \eqref{Condition principale} in Lemma \ref{Méthode groupe complexe 2} holds. 

Then by the classification of local symmetric spaces, the ones we have to study is those when $\mathfrak{g}^{\rho}$ is isomorphic to $\mathfrak{sp}(8,\mathbb{C})$, $\mathfrak{sl}(6,\mathbb{C}) \oplus \mathfrak{sl}(2,\mathbb{C})$, $\mathfrak{so}(10,\mathbb{C})\oplus \mathfrak{so}(2,\mathbb{C})$, $\mathfrak{f}_4^{\mathbb{C}}$, $\mathfrak{e}_{6(6)}$, $\mathfrak{e}_{6(2)}$, $\mathfrak{e}_{6(-14)}$ or $\mathfrak{e}_{6(-26)}$, in all cases
\[ \dim S^{\rho} < \dim S- \rk_{\mathbb{R}} G. \]

So by Lemma \ref{Méthode groupe complexe 2}, Proposition \ref{Prop groupe complexe} holds for $G=\Aut(\mathfrak{e}_6^{\mathbb{C}})$.

\vspace{0.5 cm}

\textbf{Lie algebra $\mathfrak{e}_7^{\mathbb{C}}$:}

Consider now the simple Lie algebra $\mathfrak{g}=\mathfrak{e}_7^{\mathbb{C}}$, of order 2 outer automorphism group, and of adjoint group $G_{ad}=E_7^{\mathbb{C}}$, whose compact maximal subgroup is $K=E_7$, $\rk_{\mathbb{R}}G_{ad}=7$ and $\dim K=133$. We know there exists $H \subset K$ isogenous to $\SU(8)$ with $\rk H=\rk K=7$ and $K/H$ is an irreducible symmetric space. We have the inequality
\[ \dim H+\rk_{\mathbb{R}} G_{ad}=63+7=70 < 133=\dim K \]
and if $A \in H$ is of finite order and non central in $H$, by inequality \eqref{Equation 2 complexe} we have
\[ \dim C_H(A)+\rk_{\mathbb{R}}G_{ad} \leqslant 49+7 <63= \dim H. \]

So by Lemma \ref{Méthode groupe complexe}, the first part of \eqref{Condition principale} in Lemma \ref{Méthode groupe complexe 2} holds. 

Then by the classification of local symmetric spaces, the ones we have to study is those when $\mathfrak{g}^{\rho}$ is isomorphic to $\mathfrak{sl}(8,\mathbb{C})$, $\mathfrak{so}(12,\mathbb{C}) \oplus \mathfrak{sp}(2,\mathbb{C})$, $\mathfrak{e}_6^{\mathbb{C}} \oplus \mathfrak{so}(2,\mathbb{C})$, $\mathfrak{e}_{7(7)}$, $\mathfrak{e}_{7(5)}$ or $\mathfrak{e}_{7(-25)}$, in all cases
\[ \dim S^{\rho} < \dim S- \rk_{\mathbb{R}} G. \]

So by Lemma \ref{Méthode groupe complexe 2}, Proposition \ref{Prop groupe complexe} holds for $G=\Aut(\mathfrak{e}_7^{\mathbb{C}})$.

\vspace{0.5 cm}

\textbf{Lie algebra $\mathfrak{e}_8^{\mathbb{C}}$:}

The last exceptional Lie algebra is $\mathfrak{g}=\mathfrak{e}_8^{\mathbb{C}}$, again its outer automorphism group is of order 2, its adjoint group is $G_{ad}=E_8^{\mathbb{C}}$, of maximal compact subgroup $K=E_8$, $\rk_{\mathbb{R}}G_{ad}=8$ and $\dim K=248$. We know there exists $H \subset K$ isogenous to $\SO(16)$, with $\rk H=\rk K=8$ and $K/H$ is an irreducible symmetric space. We have also the inequality
\[ \dim H+\rk_{\mathbb{R}} G_{ad}=120+8=128 < 248=\dim K \]
and if $A \in H$ is of finite order and non central in $H$, by inequality \eqref{Equation 1 complexe}  we have
\[ \dim C_H(A)+\rk_{\mathbb{R}}G_{ad} \leqslant 105+8 <120= \dim H. \]

So by Lemma \ref{Méthode groupe complexe}, the first part of \eqref{Condition principale} in Lemma \ref{Méthode groupe complexe 2} holds. 

Then by the classification of local symmetric spaces, the ones we have to study is those when $\mathfrak{g}^{\rho}$ is isomorphic to $\mathfrak{so}(16,\mathbb{C})$, $\mathfrak{e}_7^{\mathbb{C}} \oplus \mathfrak{sp}(2,\mathbb{C})$, $\mathfrak{e}_{8(8)}$ or $\mathfrak{e}_{8(-24)}$, in all cases
\[ \dim S^{\rho} < \dim S- \rk_{\mathbb{R}} G. \]

So by Lemma \ref{Méthode groupe complexe 2}, Proposition \ref{Prop groupe complexe} holds for $G=\Aut(\mathfrak{e}_8^{\mathbb{C}})$ and it concluded its proof.
\end{proof}

\section{Real simple Lie algebras}

We will in this section extend the previous proposition to the real simple Lie algebras. They are the real forms of the complex ones studied in the previous section. The ideas of the proof are similar to those of the complex case, although we face some additional difficulties. Maybe the reader can skip this section in a first reading.

\begin{prop}\label{Proposition groupe réel}
Let $\mathfrak{g}$ be a real simple Lie algebra, $G=\Aut(\mathfrak{g})$ its group of automorphisms and $S$ the associated Riemannian symmetric space. Then
\[ \underline{\gd}(\Gamma)=\vcd(\Gamma) \]
for every lattice $\Gamma \subset G$. Moreover
\[ \dim S^{\alpha} \leqslant \dim S-\rk_{\mathbb{R}}G \]
for every $\alpha \in G$ of finite order and non central.
\end{prop}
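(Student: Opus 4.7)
The plan is to mirror the strategy used for Proposition \ref{Prop groupe complexe}, with the modifications demanded by passing from a complex simple Lie algebra to its real forms. As in the complex case, Theorem \ref{Theoreme Borel-Serre} gives $\vcd(\Gamma) \geqslant \dim S - \rk_{\mathbb{R}} G$, so the dimension estimate $\dim S^{\alpha} \leqslant \dim S - \rk_{\mathbb{R}} G$ will yield the geometric statement via Lemma \ref{Lemme clé} whenever the inequality is strict, and via Lemma \ref{Lemme secours} in the equality case. The bulk of the proof is then the dimension estimate itself, handled case by case using the classification of real simple Lie algebras into classical families ($\mathfrak{sl}(n,\mathbb{R})$, $\mathfrak{sl}(n,\mathbb{H})$, $\mathfrak{so}(p,q)$, $\mathfrak{su}(p,q)$, $\mathfrak{sp}(p,q)$, $\mathfrak{sp}(2n,\mathbb{R})$, $\mathfrak{so}^*(2n)$) and the twelve exceptional real forms.

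The second step is an analog of Lemma \ref{Méthode groupe complexe 2}: if every element of $\Out(\mathfrak{g})$ has order at most $2$, then writing $\alpha=\Ad(A)\circ\rho$ with $\rho\in\Out(\mathfrak{g})$ of order $\leqslant 2$ gives $\alpha^{2}=\Ad(A\rho(A))$ inner, so $S^{\alpha}\subset S^{\alpha^{2}}$, and it suffices to bound $\dim S^{A}$ for nontrivial finite order $A\in G_{ad}$ and $\dim S^{\rho}$ for involutions $\rho\in G$. A glance at Table~1 shows that this exponent-$2$ hypothesis holds for every real simple $\mathfrak{g}$ except $\mathfrak{so}(p,p)$ with $p\geqslant 5$ odd (where $\Out=\mathbb{Z}_{2}\times\mathbb{Z}_{2}$, still fine), $\mathfrak{so}(p,p)$ with $p\geqslant 6$ even (where $\Out=\mathcal{D}_{4}$), and $\mathfrak{so}(4,4)$ (where $\Out=\mathcal{S}_{4}$). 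In these last two families, for $\alpha$ of outer order $k\in\{3,4\}$ I will pass to $\alpha^{k}$ (which is inner) and deal separately, as done for complex $\mathfrak{so}(8,\mathbb{C})$ in Proposition \ref{Prop groupe complexe}, with the possibility $\alpha^{k}=1$ by invoking the Gray--Wolf classification of finite-order automorphisms, whose fixed subalgebras are small (variants of $\mathfrak{g}_{2}$, $\mathfrak{sl}(3,\mathbb{R})$, etc.).

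Third, for the inner part one uses the dimension bounds established in \cite{Aramayona} (Sections~6.1--6.3 for the classical real simple groups), together with the centralizer inequalities \eqref{Equation 1 complexe} and \eqref{Equation 2 complexe} lifted to the real exceptional case via an analog of Lemma \ref{Méthode groupe complexe}: choose a classical subgroup $H$ of the maximal compact subgroup $K$ of the identity component $G_{ad}^{0}$, of the same rank as $K$, making $K/H$ an irreducible Riemannian symmetric space, and check $\dim H \leqslant \dim K-\rk_{\mathbb{R}} G$ and $\dim C_{H}(A)\leqslant \dim H-\rk_{\mathbb{R}} G$. The fact that $G_{ad}^{0}$ is now only a real form means $\dim S^{A}=\dim C_{G_{ad}^{0}}(A)-\dim C_{K}(A)$ (not simply $\dim C_{K}(A)$), so the analog of Lemma \ref{Méthode groupe complexe} must compare both terms; the net effect is the same inequality. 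For the involutive part, one reads off $\mathfrak{g}^{\rho}$ from Tables~2--9 and computes $\dim S^{\rho}$ as $\dim(\mathfrak{g}^{\rho})-\dim(\mathfrak{k}\cap\mathfrak{g}^{\rho})$ directly, checking in each row of each table that $\dim S^{\rho}\leqslant \dim S-\rk_{\mathbb{R}} G$.

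The main obstacle I anticipate is twofold. First, the outer automorphisms of order $3$ and $4$ for $\mathfrak{so}(4,4)$ and $\mathfrak{so}(p,p)$, since unlike the complex case Tables 2--9 do not directly list the fixed subalgebras of these higher-order automorphisms, and one must argue by restriction from complex triality plus an explicit description of $\Aut(\mathfrak{so}(p,p))/\Inn(\mathfrak{so}(p,p))$. Second, the weaker conclusion $\leqslant$ in the proposition (as opposed to the strict $<$ of Proposition \ref{Prop groupe complexe}) means that whenever $\rk_{\mathbb{Q}}\Gamma=\rk_{\mathbb{R}} G$, equality $\dim S^{\alpha}=\vcd(\Gamma)$ can occur, and Lemma \ref{Lemme clé} no longer suffices; I will then invoke Lemma \ref{Lemme secours}, verifying (2) by checking that the intersection of two distinct maximal fixed point sets is contained in a common centralizer of real rank drop $\geqslant 2$, and (3) by producing a rational flat inside a maximal $\mathbb{Q}$-split torus of $\Gamma$, using that arithmetic lattices have many conjugate $\mathbb{Q}$-split tori. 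These verifications will be the most delicate in the exceptional real forms $\mathfrak{e}_{6(6)}$, $\mathfrak{e}_{7(7)}$ and $\mathfrak{e}_{8(8)}$, which are $\mathbb{R}$-split.
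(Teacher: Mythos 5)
Your high-level strategy matches the paper's in outline: case-by-case analysis via the classification, reduce to bounding $\dim S^{A}$ for inner and $\dim S^{\rho}$ for involutions through Lemma \ref{Méthode groupe complexe 2}, handle the order-$3$ and order-$4$ outer automorphisms of $\mathfrak{so}(4,4)$ and $\mathfrak{so}(p,p)$ separately, and fall back on Lemma \ref{Lemme secours} when the strict inequality fails. But there is a genuine gap in your treatment of the inner part for the exceptional real forms, and it is precisely the part you try to wave away with ``the net effect is the same inequality.''

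You correctly observe that in the real case $\dim S^{A}=\dim C_{G_{ad}}(A)-\dim C_{K}(A)$ rather than $\dim C_{K}(A)$. But then you propose to keep the structure of Lemma \ref{Méthode groupe complexe}: pick a classical $H\subset K$ with $K/H$ symmetric and bound $\dim C_{K}(A)$ via $\dim C_{H}(A)$. That move gives an \emph{upper} bound on the quantity $\dim C_{K}(A)$ that is being \emph{subtracted} in $\dim S^{A}=\dim C_{G_{ad}}(A)-\dim C_{K}(A)$, i.e.\ it pushes $\dim S^{A}$ in the wrong direction and gives no control on the noncompact term $\dim C_{G_{ad}}(A)$. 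Knowing $C_{K}(A)$ alone does not determine the real rank or dimension of $C_{G_{ad}}(A)$, so you cannot conclude $\dim S^{A}<\dim S-\rk_{\mathbb{R}}G$ this way. The paper gets around this with a genuinely different lemma (Lemma \ref{Méthode groupe réel}): it takes a noncompact subgroup $\bar{G}\subset G$ with $G/\bar{G}$ an irreducible \emph{symmetric space}, lets $\bar{K}=K\cap\bar{G}$ be its maximal compact with $\rk\bar{K}=\rk K$, and exploits $\bar{S}^{A}=\bar{S}\cap S^{A}$ to get $\dim S-\dim S^{A}\geqslant\dim\bar{S}-\dim\bar{S}^{A}$. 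The input is then a bound on $\dim\bar{S}^{A}$ (a fixed point set in a classical noncompact symmetric space, using inequalities of type \eqref{Equation 1 réelle}--\eqref{Equation 3 réelle}, not the compact-group centralizer bounds \eqref{Equation 1 complexe}--\eqref{Equation 2 complexe} you cite). Without this change of viewpoint from ``subgroup of $K$'' to ``symmetric subspace of $S$,'' the exceptional real-form cases do not close.

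A secondary inaccuracy: you predict that the delicate equality case (needing Lemma \ref{Lemme secours}) occurs in the split exceptional forms $\mathfrak{e}_{6(6)}$, $\mathfrak{e}_{7(7)}$, $\mathfrak{e}_{8(8)}$. In fact Lemma \ref{Méthode groupe réel} gives the strict inequality for every exceptional real form; the only algebras where $\dim S^{\alpha}=\dim S-\rk_{\mathbb{R}}G$ can be attained are $\mathfrak{sl}(n,\mathbb{R})$ and $\mathfrak{so}(p,q)$, with $\alpha$ conjugate to the automorphism coming from $Q_{n-1,1}$. The equality threshold is a property of the fixed-point geometry, not of $\rk_{\mathbb{Q}}\Gamma=\rk_{\mathbb{R}}G$ per se, and the verifications for Lemma \ref{Lemme secours} then reduce to the Aramayona--Degrijse--Martínez-Pérez--Petrosyan--Souto computations for $\SL(n,\mathbb{R})$ and $\SO(p,q)$, suitably extended to the outer automorphisms (and the low-rank exceptions $\mathfrak{sl}(3,\mathbb{R})$ and $\mathfrak{sl}(4,\mathbb{R})\cong\mathfrak{so}(3,3)$ need a separate argument because condition (2) of Lemma \ref{Lemme secours} fails or the characterization of maximal fixed sets changes).
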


We will again use Lemma \ref{Méthode groupe complexe 2}, but in the case of exceptional real simple Lie algebras, we cannot use Lemma \ref{Méthode groupe complexe} to establish inequalities of the form
\[ \dim S^A < \dim S - \rk_{\mathbb{R}} G \]
for $A$ in the adjoint group $G_{ad}$ of $\mathfrak{g}$. The difficulty is that the dimension of $G_{ad}$ is not anymore twice that of a maximal compact subgroup. To some extent, we will bypass this problem using the following lemma:

\begin{lemma}\label{Méthode groupe réel}
Let $G$ be a connected Lie group which is the group of real points of a semisimple algebraic group defined over $\mathbb{R}$, and $K \subset G$ a maximal compact subgroup.  Suppose there exists a subgroup $\bar{G} \subset G$ such that $G/\bar{G}$ is an irreducible symmetric space and whose compact maximal subgroup $\bar{K} \subset K$ has the same rank as $K$. Let $S=G/K$ and $\bar{S}=\bar{G}/\bar{K}$ be the associated Riemannian symmetric spaces. If we have  
\[ \dim \bar{S} < \dim S - \rk_{\mathbb{R}} G, \]
and
\[ \dim \bar{S}^A < \dim \bar{S} - \rk_{\mathbb{R}} G \]
for every $A \in \bar{K}$ of finite order non central, then we also have
\[ \dim S^A < \dim S - \rk_{\mathbb{R}} G \]
for every $A \in G$ of finite order and non central.
\end{lemma}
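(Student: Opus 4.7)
The strategy is first to conjugate $A$ into $\bar{K}$ using the rank hypothesis, and then to compare $\dim S^{A}$ with $\dim \bar{S}^{A}$ via the Lie algebra decomposition induced by the symmetric pair $(G,\bar{G})$. Since $G$ is connected, its maximal compact subgroup $K$ is also connected, and all maximal compact subgroups of $G$ are conjugate; after conjugating, we may thus assume $A \in K$. As a torsion element in the compact connected Lie group $K$, $A$ lies in some maximal torus. The rank hypothesis $\rk \bar{K} = \rk K$ ensures that any maximal torus of $\bar{K}$ is also maximal in $K$, and since all maximal tori of $K$ are $K$-conjugate, a further conjugation allows us to suppose $A \in \bar{K} \subset \bar{G}$.

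Choose a Cartan involution $\theta$ of $\mathfrak{g}$ with fixed algebra $\mathfrak{k}$, and an involution $\sigma$ of $\mathfrak{g}$ whose fixed algebra is $\bar{\mathfrak{g}}$, with $\theta$ and $\sigma$ chosen to commute; this is possible because $\bar{K}\subset K$, so that $\theta$ restricts to a Cartan involution of $\bar{\mathfrak{g}}$. We obtain the common refinement
\[ \mathfrak{g}=\bar{\mathfrak{k}}\oplus\bar{\mathfrak{p}}\oplus\mathfrak{m}_{\mathfrak{k}}\oplus\mathfrak{m}_{\mathfrak{p}}, \]
with $\mathfrak{k}=\bar{\mathfrak{k}}\oplus\mathfrak{m}_{\mathfrak{k}}$, $\mathfrak{p}=\bar{\mathfrak{p}}\oplus\mathfrak{m}_{\mathfrak{p}}$, $\bar{\mathfrak{g}}=\bar{\mathfrak{k}}\oplus\bar{\mathfrak{p}}$, and $\mathfrak{m}=\mathfrak{m}_{\mathfrak{k}}\oplus\mathfrak{m}_{\mathfrak{p}}$; in particular $\dim S-\dim\bar{S}=\dim\mathfrak{m}_{\mathfrak{p}}$. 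Since $A\in\bar{K}\subset K\cap\bar{G}$, $\Ad(A)$ preserves each summand, and taking $A$-fixed points on $\mathfrak{p}$ yields the key estimate
\[ \dim S^{A}=\dim\bar{\mathfrak{p}}^{A}+\dim\mathfrak{m}_{\mathfrak{p}}^{A}\;\leqslant\;\dim\bar{S}^{A}+(\dim S-\dim\bar{S}). \]

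Two cases then complete the argument. If $A$ is not central in $\bar{G}$, the second hypothesis gives $\dim\bar{S}^{A}<\dim\bar{S}-\rk_{\mathbb{R}}G$, and the displayed inequality yields $\dim S^{A}<\dim S-\rk_{\mathbb{R}}G$. If instead $A$ is central in $\bar{G}^{0}$, then $\bar{G}^{0}\subset C_{G}(A)^{0}$; by the irreducibility of $G/\bar{G}$, the subgroup $\bar{G}^{0}$ is maximal connected in $G$, and since $A$ is non-central in $G$ we have $C_{G}(A)^{0}\neq G$, forcing $C_{G}(A)^{0}=\bar{G}^{0}$. Consequently $\dim\mathfrak{g}^{A}=\dim\bar{\mathfrak{g}}$, so that $\mathfrak{m}^{A}=0$ and in particular $\mathfrak{m}_{\mathfrak{p}}^{A}=0$; hence $\dim S^{A}=\dim\bar{\mathfrak{p}}=\dim\bar{S}<\dim S-\rk_{\mathbb{R}}G$ by the first hypothesis. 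The principal delicate step is this latter case, where one must translate the irreducibility of the symmetric space into the maximality of $\bar{G}^{0}$ as a connected subgroup of $G$, and then combine it with the non-centrality of $A$ in $G$ to pin down $C_{G}(A)^{0}$ exactly.
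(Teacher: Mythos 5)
Your proof is correct and follows essentially the same outline as the paper's: conjugate $A$ into a maximal torus of $\bar{K}$ using the rank hypothesis, establish the codimension inequality $\dim S - \dim S^A \geqslant \dim \bar{S} - \dim \bar{S}^A$, and split into the case $A$ central (handled via maximality of $\bar{G}^0$ from irreducibility of $G/\bar{G}$) and $A$ non-central in $\bar{G}$ (handled by the second hypothesis). The one genuine difference is that you derive the codimension inequality explicitly through the commuting $\theta$-$\sigma$ decomposition $\mathfrak{g}=\bar{\mathfrak{k}}\oplus\bar{\mathfrak{p}}\oplus\mathfrak{m}_{\mathfrak{k}}\oplus\mathfrak{m}_{\mathfrak{p}}$ and the $\Ad(A)$-invariance of the summands, whereas the paper disposes of it in one line by asserting $\bar{S}^A = \bar{S}\cap S^A$. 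Your version makes the mechanism visible and also identifies precisely why equality $\dim S^A = \dim\bar{S}$ holds in the central case (namely $\mathfrak{m}_{\mathfrak{p}}^A = 0$ follows from $C_G(A)^0 = \bar{G}^0$); the paper's version is shorter but leaves the tangent-space bookkeeping to the reader. Both are valid, and your added care around $\bar{G}$ versus $\bar{G}^0$ and the observation that $C_G(A)^0 \neq G$ since $A$ is non-central in $G$ tightens the argument without changing its substance.
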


\begin{proof}
As in the proof of Lemma \ref{Méthode groupe complexe} we can conjugate such an $A$ into a maximal torus in $\bar{K}$. 

If $A$ is central in $\bar{G}$: as $G/\bar{G}$ is irreducible, we have that the identity component of $\bar{G}$ is a maximal connected Lie subgroup of $G$. It follows thus from $\bar{G}\subset C_G(A) \subsetneq G$ that the Riemannian symmetric spaces of $C_G(A)$ and $\bar{G}$ are the same, that is $S^A=\bar{S}$. Then the result follows from the assumption
\[\dim \bar{S} < \dim S- \rk_{\mathbb{R}} G. \]
Suppose now that $A$ is not central in $\bar{G}$, then we have
\[ \dim S - \dim S^A \geqslant \dim \bar{S}-\dim \bar{S}^A \]
as $\bar{S}^A=\bar{S} \cap S^A$. Then the result follows because
\[ \dim \bar{S} - \dim \bar{S}^A > \rk_{\mathbb{R}} G \]
by assumption.
\end{proof}

In all cases if interest, the group $\bar{G}$ will be a classical group. We will use the following inequalities  to majorate $\dim \bar{S}^A$ (see \cite{Aramayona} sections 6.5, 6.6 and 6.8):

1. Let $A \in \SU(p,q)$ ($p \leqslant q$, $ p+q \geqslant 3$) of finite order non central, and $S=\SU(p,q)/\mathrm{S}(\U(p)\times \U(q))$ the associated symmetric space, then
\begin{equation}
\label{Equation 1 réelle}
 \dim S^A \leqslant 2p(q-1).
\end{equation}

2. Let $A \in \Sp(p,q)$ ($p \leqslant q$, $p+q \geqslant 3$) of finite order non central, and $S=\Sp(p,q)/(\Sp(p)\times \Sp(q))$ the associated symmetric space, then
\begin{equation}
\label{Equation 2 réelle}
\dim S^A \leqslant 4p(q-1).
\end{equation}

3. Let $A \in \SO^*(2n)$ ($n \geqslant 2$) of finite order non central, et $S=\SO^*(2n)/\U(n)$ the associated symmetric space, then
\begin{equation}
\label{Equation 3 réelle}
 \dim S^A \leqslant n^2-n-2(n-1).
\end{equation}

The tables below list exceptional real simple Lie groups, the subgroups $\bar{G}$ we will use and the informations we need to know for the proof of Proposition \ref{Proposition groupe réel}. Note that for more simplicity, the compact maximal subgroups $K$ are given up to isogeny.

\begin{center}
\begin{tabular}{|*{4}{c|}}
\hline
$G_{ad}$ & $K$ & $\bar{G}$ & $\bar{K}$  \\
\hline
$E_{6(6)} $ & $\Sp(4)$ & $\Sp(2,2)$ & $\Sp(2)\times \Sp(2)$ \\
\hline
$E_{6(2)} $ & $\SU(6) \times \SU(2)$ & $\SO^*(10) \times \SO(2)$ & $\U(5) \times \SO(2)$ \\
\hline
$E_{6(-14)} $ & $\SO(10) \times \SO(2)$ & $\SO^*(10) \times \SO(2)$ & $\U(5) \times \SO(2)$ \\
\hline
$E_{6(-26)} $ & $F_4$ & $\Sp(1,3)$ & $\Sp(1) \times \Sp(3)$ \\
\hline
$E_{7(7)} $ & $\SU(8)$ & $E_{6(2)}\times \SO(2)$ & $\SU(6) \times \SU(2) \times \SO(2)$\\
\hline
$E_{7(-5)} $ & $\SO(12)\times \SU(2)$ & $\SU(4,4)$ & $\mathrm{S}(\U(4) \times \U(4))$\\
\hline
$E_{7(-25)} $ & $E_6 \times \SO(2)$ & $\SU(2,6)$ & $\mathrm{S}(\U(2) \times \U(6))$ \\
\hline
$E_{8(8)} $ & $\SO(16)$ & $\SO^*(16)$ & $\U(8)$\\
\hline
$E_{8(-24)} $ & $E_7 \times \SU(2)$ & $\SO^*(16)$ & $\U(8)$\\
\hline
$G_{2(2)}$ & $\SO(4)$ & $\SL(2,\mathbb{R}) \times \SL(2,\mathbb{R})$ & $\SO(2) \times \SO(2)$ \\
\hline
$F_{4(4)}$ & $\Sp(3) \times \Sp(1)$ & $\SO(4,5)$ & $\mathrm{S}(\mathrm{O}(4) \times \mathrm{O}(5))$ \\
\hline
\end{tabular}
\captionof{table}{Real exceptional simple centerless Lie groups $G_{ad}$, certain classical subgroups $\bar{G} \subset G_{ad}$ and the respective maximal compact subgroups}
\end{center}

\begin{center}
\begin{tabular}{|*{6}{c|}}
\hline
$G_{ad}$ &  $\dim S$ & $\dim \bar{S}$ &  $\rk \bar{K}=\rk K$ & $\rk_{\mathbb{R}} G_{ad}$ \\
\hline
$E_{6(6)} $   &  42 & 16 & 4 & 6 \\
\hline
$E_{6(2)} $  & 40 & 20 & 6 & 4\\
\hline
$E_{6(-14)} $  & 32 & 20 & 6 & 2\\
\hline
$E_{6(-26)} $   & 26 & 12 & 4 & 2 \\
\hline
$E_{7(7)} $  & 70 & 40 & 7 & 7 \\
\hline
$E_{7(-5)} $  & 64 & 32 & 7 & 4 \\
\hline
$E_{7(-25)} $  & 54 & 24 & 7 & 3\\
\hline
$E_{8(8)} $   & 128 & 56 & 8 & 8\\
\hline
$E_{8(-24)} $  & 112 & 56 & 8 & 4\\
\hline
$G_{2(2)}$ & 8 & 4 & 2 & 2 \\
\hline
$F_{4(4)}$  & 28 & 20 & 4 & 4 \\
\hline
\end{tabular}
\captionof{table}{With the same notations as in Table 11, dimensions of the Riemannian symmetric spaces associated to $G_{ad}$ and $\bar{G}$, together with the ranks of $K$, $\bar{K}$ and $G_{ad}$}
\end{center}

We are now ready to prove Proposition \ref{Proposition groupe réel}. 

\begin{proof}[Proof of Proposition \ref{Proposition groupe réel}]

Recall that the first claim holds when the adjoint group has real rank 1 by Proposition \ref{Prop rang 1}, that is when $\mathfrak{g}$ is isomorphic to $\mathfrak{sl}(2,\mathbb{R})\cong \mathfrak{sp}(2,\mathbb{R})$, $\mathfrak{sl}(2,\mathbb{H})$, $\mathfrak{so}^*(4)$, $\mathfrak{so}^*(6)$, $\mathfrak{so}(1,n)$, $\mathfrak{su}(1,n)$, $\mathfrak{sp}(1,n)$ and $\mathfrak{f}_{4(-20)}$. The second claim is also true because if $\alpha \in \Aut(\mathfrak{g})$ is of finite order and non central, then $S^{\alpha}$ is a strict submanifold of $S$ so we have
\[ \dim S^{\alpha} \leqslant \dim S - 1. \]

We suppose from now on that $\rk_{\mathbb{R}} G \geqslant 2$. By inspection of the table of outer automorphisms in section 2.3, we see that every outer automorphism of  $\mathfrak{g}$ has order 2, except if $\mathfrak{g}=\mathfrak{so}(p,p)$ with $p \geqslant 4$ even.

As in the proof of Proposition \ref{Prop groupe complexe}
We will again do a case-by-base analysis. 

\vspace{0.5cm}

\textbf{Classical real simple Lie algebras other that $\mathfrak{sl}(n,\mathbb{R})$ and $\mathfrak{so}(p,q)$}: 

We start dealing with the classical Lie algebras $\mathfrak{su}(p,q)$, $\mathfrak{sl}(n,\mathbb{H})$, $\mathfrak{sp}(2n,\mathbb{R})$ and $\mathfrak{so}^*(2n)$. Note that we rule out $\mathfrak{su}(2,2) \cong \mathfrak{so}(2,4)$ and $\mathfrak{sp}(4,\mathbb{R}) \cong \mathfrak{so}(2,3)$. 

We use again Lemma \ref{Méthode groupe complexe 2}. We want then to establish 
\[ \dim S^A < \dim S- \rk_{\mathbb{R}} G \text{ and } \dim S^{\rho} < \dim S- \rk_{\mathbb{R}} G\]
for every $A$ in the adjoint group $G_{ad}$ of finite order and non central and for every involution $\rho \in G=\Aut(\mathfrak{g})$. The first condition holds by the computations in sections 6.4 to 6.7 of \cite{Aramayona}. Using the classification of local symmetric spaces, we can check the second condition as we did in the complex case. For instance, if $\mathfrak{g}=\mathfrak{sp}(2n,\mathbb{R})$ with $n \geqslant 3$, then $\mathfrak{g}^{\rho}$ is either compact or isomorphic to one of the following: $\mathfrak{sp}(2k,\mathbb{R}) \oplus \mathfrak{sp}(2(n-k),\mathbb{R})$, $\mathfrak{u}(k,n-k)$, $\mathfrak{gl}(n,\mathbb{R})$ or $\mathfrak{sp}(n,\mathbb{C})$, the last case only appearing if $n$ is even. The Lie algebra $\mathfrak{g}^{\rho}$ for which $\dim S^{\rho}$ is maximal is $\mathfrak{sp}(2(n-1),\mathbb{R}) \oplus \mathfrak{sp}(2,\mathbb{R})$, for which we have
\[ \dim S^{\rho}=n^2-n+2 < n^2 = \dim S - \rk_{\mathbb{R}} G. \]
Hence by Lemma \ref{Méthode groupe complexe 2} and Lemma \ref{Lemme clé}, Proposition \ref{Proposition groupe réel} holds for $G=\Aut(\mathfrak{sp}(2n,\mathbb{R}))$. The cases of $\mathfrak{su}(p,q)$, $\mathfrak{sl}(n,\mathbb{H})$ and $\mathfrak{so}^*(2n)$ are similar.

\vspace{0.5cm}

\textbf{Lie algebras $\mathfrak{sl}(n,\mathbb{R})$ and $\mathfrak{so}(p,q)$}
The remaining classical cases are $\mathfrak{sl}(n,\mathbb{R})$ and $\mathfrak{so}(p,q)$. If $p \geqslant 6$ is even, then $\Out(\mathfrak{so}(p,p))$ is isomorphic to $D_4$ so every outer automorphism  $\rho$ has order 2 or 4. The only case where we have order 3 outer automorphisms is $\mathfrak{so}(4,4)$, as $\Out(\mathfrak{so}(4,4))$ is isomorphic to $S_4$.

As already noted in \cite{Aramayona}, where the argument for lattices in $\SL(n,\mathbb{R})$ and $\SO(p,q)$ was more involved than in the other cases, we face the problem that there exists $\alpha \in \Aut(\mathfrak{g})$ such that
\[ \dim S^{\alpha}=\dim S-\rk_{\mathbb{R}} G. \]
Our next goal is to characterize the automorphisms $\alpha$ for which this happens.

\begin{lemma}\label{Lemme sl(n) et so(p,q)}
If $\mathfrak{g}=\mathfrak{sl}(n,\mathbb{R})$ with $n \neq 4$ or $\mathfrak{so}(p,q)$ with $p+q=n$ and $(p,q)\neq (3,3)$, and $\alpha \in \Aut(\mathfrak{g})$, then
\[ \dim S^{\alpha} \leqslant \dim S -  \rk_{\mathbb{R}} G \]
with equality if and only if $n$ is odd and $S^{\alpha}=S^A$ with $A$ conjugated to
\[ Q_{n-1,1}=\begin{pmatrix}
-I_{n-1} & 0 \\
0 & 1 \\
\end{pmatrix} \]
in $\PSL(n,\mathbb{R})$ or $\PSO(p,q)$, or $n$ is even and $\alpha$ is conjugated to the outer automorphism corresponding to the conjugation by $Q_{n-1,1}$.
\end{lemma}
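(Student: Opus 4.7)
The approach is to decompose any finite order automorphism as $\alpha = \Ad(A) \circ \rho$ with $A \in G_{ad}$ and $\rho$ representing the outer class, and reduce to the case where $\alpha$ is either inner or an outer involution. The outer groups appearing in the lemma have exponent at most $2$ except for $\mathfrak{so}(p,p)$ with $p \geqslant 6$ even (exponent $4$) and $\mathfrak{so}(4,4)$ (where $S_3$ embeds as triality). In these remaining cases I would replace $\alpha$ by a power $\alpha^k$ whose outer part has order $\leqslant 2$; the inclusion $S^{\alpha} \subset S^{\alpha^k}$ then transports any strict inequality upward, and the triality situation for $\mathfrak{so}(4,4)$ is handled exactly as for $\mathfrak{so}(8,\mathbb{C})$ in the proof of Proposition \ref{Prop groupe complexe}, using the Gray--Wolf classification \cite{Gray-Wolf} of order three complex automorphisms.

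For inner $\alpha = \Ad(A)$ of finite order and non-central, I would invoke directly Sections 6.4 (case $\SL(n,\mathbb{R})$) and 6.5 (case $\SO(p,q)$) of \cite{Aramayona}, which already establish $\dim S^A \leqslant \dim S - \rk_{\mathbb{R}} G$ together with the characterization of equality as $A$ conjugate to $Q_{n-1,1}$ in $\PSL(n,\mathbb{R})$ or $\PSO(p,q)$. Since $\det Q_{n-1,1} = (-1)^{n-1}$, the matrix $Q_{n-1,1}$ defines an inner automorphism precisely when $n$ is odd, which gives the first equality case of the lemma; when $n$ is even, conjugation by $Q_{n-1,1}$ induces an outer involution, which is exactly the second equality case. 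For outer involutions $\rho$ I would enumerate the candidate isotropy subalgebras $\mathfrak{g}^{\rho}$ from Berger's tables (Section 2.4) and compute each $\dim S^{\rho}$ as the dimension of the Riemannian symmetric space of the adjoint group of $\mathfrak{g}^{\rho}$. For $\mathfrak{g} = \mathfrak{sl}(n,\mathbb{R})$, the extremal case $\mathfrak{g}^{\rho} = \mathfrak{s}(\mathfrak{gl}(k,\mathbb{R}) \oplus \mathfrak{gl}(l,\mathbb{R}))$ yields
\[ \dim S^{\rho} = \tfrac{k^2 + l^2 + n - 2}{2}, \]
and equality with $\dim S - \rk_{\mathbb{R}} G = n(n-1)/2$ forces $kl = n-1$, i.e.\ $\{k,l\} = \{n-1,1\}$. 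The remaining candidates ($\mathfrak{so}(k,l)$ with $\{k,l\} \neq \{n-1,1\}$, $\mathfrak{sp}(n,\mathbb{R})$, $\mathfrak{gl}(n/2,\mathbb{C})$, $\mathfrak{sl}(n/2,\mathbb{H})$, $\mathfrak{su}(p,n-p)$) produce strict inequality away from the excluded coincidence $\mathfrak{sp}(4,\mathbb{R}) \cong \mathfrak{so}(3,3)$ for $n = 4$. The extremal subalgebra $\mathfrak{s}(\mathfrak{gl}(n-1,\mathbb{R}) \oplus \mathfrak{gl}(1,\mathbb{R}))$ is exactly the fixed subalgebra of conjugation by $Q_{n-1,1}$. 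An analogous verification for $\mathfrak{so}(p,q)$ uses Table 3, with the placement of the $+1$ entry of $Q_{n-1,1}$ inside the $p$-block versus the $q$-block determining both the size of $S^{\rho}$ and whether this conjugation lies in the inner or outer coset of $\Aut(\mathfrak{so}(p,q))$.

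The main obstacle is the completeness of the equality analysis. One must rule out equality for every entry of Berger's tables (with careful attention to the various sign placements in the $\mathfrak{so}(p,q)$ family, where the outer group can be $\mathbb{Z}_2 \times \mathbb{Z}_2$ or $D_4$), and also verify that the reduction $\alpha \rightsquigarrow \alpha^k$ does not spuriously create new equality cases, i.e.\ that when $\alpha$ is neither inner nor itself an involution one always has $\dim S^{\alpha} < \dim S^{\alpha^k}$. Establishing this last point requires analyzing how $\alpha$ acts on the fixed subalgebra $\mathfrak{g}^{\alpha^2}$ in the extremal configuration $\alpha^2 \sim \Ad(Q_{n-1,1})$, and showing that the further involution induced by $\alpha$ necessarily has a strictly smaller fixed set; this is the most delicate step.
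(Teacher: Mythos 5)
Your proposal follows essentially the same strategy as the paper's proof: decompose $\alpha = \Ad(A)\circ\rho$, pass to a power $\alpha^k$ so that the outer part has order at most $2$, handle the inner case by citing \cite{Aramayona}, handle the involution case via Berger's classification of local symmetric spaces, and handle the triality phenomenon for $\mathfrak{so}(4,4)$ via \cite{Gray-Wolf}. Your explicit computation that $\dim S^{\rho} = (k^2+l^2+n-2)/2$ for $\mathfrak{g}^{\rho} = \mathfrak{s}(\mathfrak{gl}(k,\mathbb{R})\oplus\mathfrak{gl}(l,\mathbb{R}))$, and that equality with $n(n-1)/2$ forces $kl = n-1$, is correct and is a nice explicit version of a verification the paper leaves implicit.

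Two issues. First, a minor citation error: for the inner case of $\SL(n,\mathbb{R})$ and $\SO(p,q)$ the paper appeals to Sections 7 and 8 of \cite{Aramayona}, not 6.4 and 6.5 (the latter cover $\mathfrak{su}(p,q)$, $\mathfrak{sl}(n,\mathbb{H})$, etc., and are invoked elsewhere in the paper). Second, the concern you flag at the end --- whether the passage $\alpha \rightsquigarrow \alpha^k$ correctly propagates the characterization of equality when $\rho$ has order $4$ (which only occurs for $\mathfrak{so}(p,p)$ with $p\geqslant 6$ even) --- is legitimate, but you frame it as wanting to prove more than the lemma actually needs. Since $n=2p$ is even in that situation, for nontrivial inner $\alpha^4=\Ad(B)$ the inequality $\dim S^{\alpha^2}\leqslant\dim S^B < \dim S - \rk_{\mathbb{R}}G$ is already strict by the $n$ even part of the characterization, so equality can only threaten when $\alpha^4=\mathrm{id}$ and $\alpha^2$ is conjugate to the outer $Q_{n-1,1}$. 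What remains is to see that an order-$4$ outer class in $D_4$ cannot square to the $Q_{n-1,1}$ class; this is the genuine piece of verification you should supply rather than the stronger general assertion that $S^{\alpha}\subsetneq S^{\alpha^k}$ for all non-involutive $\alpha$. The paper itself treats this step tersely ("replace $\alpha$ by $\alpha^2$ and it will suffice to consider the outer automorphisms of order 2"), so you are identifying a real point of care, but you should narrow it to exactly what the lemma's equality statement requires.
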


By abusing notations we will still denote $S^A$ the fixed point set by the automorphism corresponding to the conjugation by $A$ with $A$ conjugated to $Q_{n-1,1}$ even if it is not an inner automorphism.

\begin{proof}
We begin with the case $\mathfrak{g}\neq \mathfrak{so}(4,4)$ and we use the same strategy that for the proof of Lemma \ref{Méthode groupe complexe 2}. An automorphism $\alpha$ of $\mathfrak{g}$ is the composition of an inner automorphism and an outer automorphism $\rho$ of order 2 or 4. If $\rho$ has order 4 then $\alpha^2$ is the composition of an inner automorphism and $\rho^2$ which is of order 2, and we have the inclusion $S^{\alpha} \subset S^{\alpha^2}$ so we can replace $\alpha$ by $\alpha^2$ and it will suffice to consider the outer automorphisms of order 2. Then if $\rho$ has order 2, $\alpha^2$ is an inner automorphism, that is $\alpha^2=\Ad(A)$ with $A \in G_{ad}$. If $A$ is not trivial, by the computations in sections 7 and 8 in \cite{Aramayona} we have
\[ \dim S^{\alpha^2} \leqslant \dim S^A \leqslant \dim S - \rk_{\mathbb{R}} G \]
with equality in the last inequality if and only if $n$ is odd and $A$ conjugated by $Q_{n-1,1}$. The first inequality is an equality if and only if $S^{\alpha^2}=S^A$. We have proved the claim if $A$ is not trivial.
If $A$ is trivial, then $\alpha$ is an involution, so we use the classification of local symmetric spaces. For instance if $\mathfrak{g}=\mathfrak{sl}(n,\mathbb{R})$ with $n \neq 4$, the non-compact associated isotropy algebra $\mathfrak{h}=\mathfrak{g}^{\alpha}$ are $\mathfrak{so}(k,n-k)$, $\mathfrak{s}(\mathfrak{gl}(k,\mathbb{R}) \oplus \mathfrak{gl}(n-k,\mathbb{R}))$, $\mathfrak{gl}\left(\frac{n}{2},\mathbb{C}\right)$ or $\mathfrak{sp}(n,\mathbb{R})$, the last two cases only appearing if $n$ is even. In all theses cases we have
\[ \dim S^{\alpha} \leqslant \dim S - \rk_{\mathbb{R}} G \]
with equality if and only if $\mathfrak{h}=\mathfrak{s}(\mathfrak{gl}(1,\mathbb{R}) \oplus \mathfrak{gl}(n-1,\mathbb{R}))$, which corresponds to an automorphism $\alpha$ conjugated to the inner automorphism $\Ad(Q_{n-1,1})$ if $n$ is odd or to the outer automorphism of conjugation by $Q_{n-1,1}$ if $n$ if even.

It remains to consider the case $\mathfrak{g}=\mathfrak{so}(4,4)$. In this case the group of outer automorphism is isomorphic to the symmetric group $\mathcal{S}_4$ so we can have elements of order 2, 3 or 4. If $\rho$ is an outer automorphism of order 2 or 4, and $\alpha=\Ad(A) \circ \tau$, we apply the same method using the classification of local symmetric spaces and we see that
\[ \dim S^{\alpha} \leqslant \dim S -  \rk_{\mathbb{R}} G \]
with equality if and only if $\alpha$ is an outer automorphism corresponding to the conjugation by a matrix conjugated to $Q_{7,1}$.
If $\rho$ is of order 3 and $\alpha=\Ad(A) \circ \tau$ then $\alpha^3$ is inner and we have just to treat the case where it is trivial, that is $\alpha$ is of order 3. Then its complexification $\alpha^{\mathbb{C}}$ is an order 3 complex automorphism of $\mathfrak{g}^{\mathbb{C}}=\mathfrak{so}(8,\mathbb{C})$, a case already treated in the previous section. We know that the fixed point set $(\mathfrak{g}^{\mathbb{C}})^{\alpha^{\mathbb{C}}}$ is isomorphic to $\mathfrak{g}_2^{\mathbb{C}}$, $\mathfrak{sl}(3,\mathbb{C})$ or is compact. As $\mathfrak{g}^{\alpha}$ is a real form of $(\mathfrak{g}^{\mathbb{C}})^{\alpha^{\mathbb{C}}}$, it is isomorphic to $\mathfrak{g}_{2(2)}$, $\mathfrak{sl}(3,\mathbb{R})$, $\mathfrak{su}(2,1)$, $\mathfrak{s(u}(d_p,d_q) \oplus \mathfrak{o}(s_p,s_q))$ with $2d_p+s_p=4$ and $2d_q+s_q=4$ or is compact. In all these cases we have 
\[ \dim S^{\alpha} < \dim S- \rk_{\mathbb{R}} G \]
so we have proved the claim.
\end{proof}

Let us assume for a while that $\mathfrak{g} \neq \mathfrak{sl}(3,\mathbb{R})$ and $\mathfrak{g} \neq \mathfrak{sl}(4,\mathbb{R}) \cong \mathfrak{so}(3,3)$. We will conclude that $\vcd(\Gamma)=\underline{\gd}(\Gamma)$ using Lemma \ref{Lemme secours}. The first condition in the said lemma holds by Lemma \ref{Lemme sl(n) et so(p,q)}. To check the second condition, take $S^{\alpha}$ and $S^{\beta}$ maximal and distinct. We want to establish
\[ \dim(S^{\alpha} \cap S^{\beta}) \leqslant \vcd(\Gamma)-2. \]
First remark that by maximality $S^{\alpha}$ and $S^{\beta}$ are not contained in each other. If one of them is not of the form $S^A$ with $A$ conjugated to $Q_{n-1,1}$, let us say $S^{\alpha}$, then $\dim S^{\alpha} \leqslant \vcd(\Gamma)-1$ and $S^{\alpha} \cap S^{\beta}$ is a strict submanifold of $S^{\alpha}$ so the result holds. If we have $S^{\alpha}=S^A$ and $S^{\beta}=S^B$ with $A$ and $B$ conjugated to $Q_{n-1,1}$, then we refer to the computations in the proofs of Lemma 7.2 and Lemma 8.4 in \cite{Aramayona}. The proof of the third point is the same as for Lemma 7.5 and Lemma 8.8 in \cite{Aramayona}. Note that in \cite{Aramayona} the authors consider only inner automorphisms, so the case $n$ odd, but their argument also works without modifications of any kind for $n$ even.

It must be enlighted why the argument we just gave fails for $\mathfrak{sl}(3,\mathbb{R})$ and $\mathfrak{sl}(4,\mathbb{R}) \cong \mathfrak{so}(3,3)$. For $\mathfrak{sl}(3,\mathbb{R})$, the second condition of Lemma \ref{Lemme secours} does not hold anymore. In the case that $\mathfrak{sl}(4,\mathbb{R}) \cong \mathfrak{so}(3,3)$, the conclusion of Lemma \ref{Lemme sl(n) et so(p,q)} does not apply, because we have that
\[ \dim S^{\alpha}=\dim S-\rk_{\mathbb{R}} G \]
when $\alpha$ is either the conjugation by $Q_{5,1}$ in $\PSO(3,3)$ or the conjugation by $Q_{3,1}$ in $\PSL(4,\mathbb{R})$. These two are not conjugated, as the conjugation by $Q_{5,1}$ in $\PSO(3,3)$ corresponds in $\PSL(4,\mathbb{R})$ to an outer automorphism whose fixed point set is isomorphic to $\PSp(4,\mathbb{R})$. 

However the proof of Lemma 7.6 in \cite{Aramayona} concerning lattices in $\SL(3,\mathbb{R})$ can be adapted to $\Aut(\mathfrak{sl}(3,\mathbb{R}))$ and  $\Aut(\mathfrak{sl}(4,\mathbb{R}))$. In fact it can be adapted to $\Aut(\mathfrak{sl}(n,\mathbb{R}))$ for all $n \geqslant 3$, because a lattice in $\PSL(n,\mathbb{R})$ of $\mathbb{Q}$-rank $n-1$ can be conjugated to a lattice commensurable to $\PSL(n,\mathbb{Z})$ (see the classification of arithmetic groups of classical groups in Section 18.5 in \cite{Witte}).

As a result Proposition \ref{Proposition groupe réel} holds for all real classical simple Lie algebras.

\vspace{0.5cm}

\textbf{Lie algebra $\mathfrak{e}_{6(6)}$}

Here we consider the simple exceptional Lie algebra $\mathfrak{g}=\mathfrak{e}_{6(6)}$. Its outer automorphism group is of order 2 and its adjoint group is $G_{ad}=E_{6(6)}$, which is the group of real points of an algebraic group of real rank 6. This group contains a maximal compact subgroup $K$ isogenous to $\Sp(4)$. We will use Lemma \ref{Méthode groupe réel} to check the first condition of Lemma \ref{Méthode groupe complexe 2}.

The group $G_{ad}=E_{6(6)}$ contains a subgroup $\bar{G}$ isogenous to $\Sp(2,2)$ whose maximal compact subgroup is $\bar{K}=\Sp(2) \times \Sp(2)$. We see in \cite{Berger} that $E_{6(6)}/\Sp(2,2)$ is an irreducible symmetric space, furthermore we have $\rk(K)=\rk(\bar{K})=4$ and 
  \[ \dim \bar{S} = 16 < 42 - 6 = \dim S - \rk_{\mathbb{R}} G_{ad} \]
where $S$ and $\bar{S}$ are the Riemannian symmetric spaces associated to respectively $G_{ad}$ and $\bar{G}$.

Moreover, if $A \in \bar{K}$ is of finite order and non central,  we get from inequality \eqref{Equation 2 réelle}
\[ \dim \bar{S}^A \leqslant 8  < 16 - 6 = \dim \bar{S} -\rk_{\mathbb{R}} G_{ad}.  \]

Lemma \ref{Méthode groupe réel} applies and shows that the first condition of Lemma \ref{Méthode groupe complexe 2} holds. 

To check the second condition we  list the local symmetric spaces associated to an involution $\rho \in \Aut(\mathfrak{e}_{6(6)})$. By the classification of Berger in \cite{Berger}, the only non compact cases are when $\mathfrak{g}^{\rho}$ is isomorphic to $\mathfrak{sp}(2,2), \mathfrak{sp}(8,\mathbb{R}), \mathfrak{sl}(6,\mathbb{R}) \oplus \mathfrak{sl}(2,\mathbb{R}), \mathfrak{sl}(3,\mathbb{H}) \oplus \mathfrak{su}(2), \mathfrak{so}(5,5) \oplus \mathfrak{so}(1,1)$ or $\mathfrak{f}_{4(4)}$. We have in all cases
\[ \dim S^{\rho} < \dim S- \rk_{\mathbb{R}} G. \]

So by Lemma \ref{Méthode groupe complexe 2}, Proposition \ref{Proposition groupe réel} holds for $G=\Aut(\mathfrak{e}_{6(6)})$.

\vspace{0.5cm}

\textbf{Lie algebra $\mathfrak{e}_{6(2)}$}

Here we consider the simple exceptional Lie algebra $\mathfrak{g}=\mathfrak{e}_{6(2)}$. Its outer automorphism group is of order 2 and its adjoint group is $G_{ad}=E_{6(2)}$, which is the group of real points of an algebraic group of real rank 4. This group contains a maximal compact subgroup $K$ isogenous to $\SU(6)\times \SU(2)$. We will use Lemma \ref{Méthode groupe réel} to check the first condition of Lemma \ref{Méthode groupe complexe 2}.

The group $G_{ad}=E_{6(2)}$ contains a subgroup $\bar{G}$ isogenous to $\SO^*(10) \times \SO(2)$ whose maximal compact subgroup is $\bar{K}=\U(5) \times \SO(2)$. We see in \cite{Berger} that $E_{6(2)}/(\SO^*(10)\times \SU(2))$ is an irreducible symmetric space, furthermore we have $\rk(K)=\rk(\bar{K})=6$ and 
  \[ \dim \bar{S} = 20 < 40 - 4 = \dim S - \rk_{\mathbb{R}} G_{ad} \]
where $S$ and $\bar{S}$ are the Riemannian symmetric spaces associated to respectively $G_{ad}$ and $\bar{G}$.

Moreover, if $A \in \bar{K}$ is of finite order and non central,  we get from inequality \eqref{Equation 3 réelle}
\[ \dim \bar{S}^A \leqslant 12  < 20 - 4 = \dim \bar{S} -\rk_{\mathbb{R}} G_{ad}.  \]

Lemma \ref{Méthode groupe réel} applies and shows that the first condition of Lemma \ref{Méthode groupe complexe 2} holds. 

To check the second condition we  list the local symmetric spaces associated to an involution $\rho \in \Aut(\mathfrak{e}_{6(2)})$. By the classification of Berger in \cite{Berger}, the only non compact cases are when $\mathfrak{g}^{\rho}$ is isomorphic to $\mathfrak{sp}(1,3), \mathfrak{sp}(8,\mathbb{R}), \mathfrak{su}(2,4) \oplus \mathfrak{su}(2), \mathfrak{su}(3,3) \oplus \mathfrak{sl}(2,\mathbb{R}), \mathfrak{so}(4,6) \oplus \mathfrak{so}(2)$, $\mathfrak{so}^*(10) \oplus \mathfrak{so}(2)$ or $\mathfrak{f}_{4(4)}$. We have in all cases
\[ \dim S^{\rho} < \dim S- \rk_{\mathbb{R}} G. \]

So by Lemma \ref{Méthode groupe complexe 2}, Proposition \ref{Proposition groupe réel} holds for $G=\Aut(\mathfrak{e}_{6(2)})$.

\vspace{0.5cm}

\textbf{Lie algebra $\mathfrak{e}_{6(-14)}$}

Here we consider the simple exceptional Lie algebra $\mathfrak{g}=\mathfrak{e}_{6(-14)}$. Its outer automorphism group is of order 2 and its adjoint group is $G_{ad}=E_{6(-14)}$, which is the group of real points of an algebraic group of real rank 2. This group contains a maximal compact subgroup $K$ isogenous to $\SO(10)\times \SO(2)$. We will use Lemma \ref{Méthode groupe réel} to check the first condition of Lemma \ref{Méthode groupe complexe 2}.

The group $G_{ad}=E_{6(-14)}$ contains a subgroup $\bar{G}$ isogenous to $\SO^*(10) \times \SO(2)$ whose maximal compact subgroup is $\bar{K}=\U(5) \times \SO(2)$. We see in \cite{Berger} that $E_{6(-14)}/(\SO^*(10)\times \SU(2))$ is an irreducible symmetric space, furthermore we have $\rk(K)=\rk(\bar{K})=6$ and 
  \[ \dim \bar{S} = 20 < 32 - 2 = \dim S - \rk_{\mathbb{R}} G_{ad} \]
where $S$ and $\bar{S}$ are the Riemannian symmetric spaces associated to respectively $G_{ad}$ and $\bar{G}$.

Moreover, if $A \in \bar{K}$ is of finite order and non central,  we get from inequality \eqref{Equation 3 réelle}
\[ \dim \bar{S}^A \leqslant 12  < 20 - 2 = \dim \bar{S} -\rk_{\mathbb{R}} G_{ad}.  \]

Lemma \ref{Méthode groupe réel} applies and shows that the first condition of Lemma \ref{Méthode groupe complexe 2} holds. 

To check the second condition we  list the local symmetric spaces associated to an involution $\rho \in \Aut(\mathfrak{e}_{6(-14)})$. By the classification of Berger in \cite{Berger}, the only non compact cases are when $\mathfrak{g}^{\rho}$ is isomorphic to $\mathfrak{sp}(2,2), \mathfrak{su}(2,4) \oplus \mathfrak{su}(2), \mathfrak{su}(1,5) \oplus \mathfrak{sl}(2,\mathbb{R}), \mathfrak{so}(2,8) \oplus \mathfrak{so}(2)$, $\mathfrak{so}^*(10) \oplus \mathfrak{so}(2)$ or $\mathfrak{f}_{4(-20)}$. We have in all cases
\[ \dim S^{\rho} < \dim S- \rk_{\mathbb{R}} G. \]

So by Lemma \ref{Méthode groupe complexe 2}, Proposition \ref{Proposition groupe réel} holds for $G=\Aut(\mathfrak{e}_{6(-14)})$.

\vspace{0.5cm}

\textbf{Lie algebra $\mathfrak{e}_{6(-26)}$}

Here we consider the simple exceptional Lie algebra $\mathfrak{g}=\mathfrak{e}_{6(-26)}$. Its outer automorphism group is of order 2 and its adjoint group is $G_{ad}=E_{6(-26)}$, which is the group of real points of an algebraic group of real rank 2. This group contains a maximal compact subgroup $K$ isogenous to $F_4$. We will use Lemma \ref{Méthode groupe réel} to check the first condition of Lemma \ref{Méthode groupe complexe 2}.

The group $G_{ad}=E_{6(-26)}$ contains a subgroup $\bar{G}$ isogenous to $\Sp(1,3)$ whose maximal compact subgroup is $\bar{K}=\Sp(1)\times \Sp(3)$. We see in \cite{Berger} that $E_{6(-26)}/\Sp(1,3)$ is an irreducible symmetric space, furthermore we have $\rk(K)=\rk(\bar{K})=4$ and 
  \[ \dim \bar{S} = 12 < 26 - 2 = \dim S - \rk_{\mathbb{R}} G_{ad} \]
where $S$ and $\bar{S}$ are the Riemannian symmetric spaces associated to respectively $G_{ad}$ and $\bar{G}$.

Moreover, if $A \in \bar{K}$ is of finite order and non central,  we get from inequality \eqref{Equation 2 réelle}
\[ \dim \bar{S}^A \leqslant 8  < 12 - 2 = \dim \bar{S} -\rk_{\mathbb{R}} G_{ad}.  \]

Lemma \ref{Méthode groupe réel} applies and shows that the first condition of Lemma \ref{Méthode groupe complexe 2} holds. 

To check the second condition we  list the local symmetric spaces associated to an involution $\rho \in \Aut(\mathfrak{e}_{6(-26)})$. By the classification of Berger in \cite{Berger}, the only non compact cases are when $\mathfrak{g}^{\rho}$ is isomorphic to $\mathfrak{sp}(1,3), \mathfrak{sl}(3,\mathbb{H}) \oplus \mathfrak{sp}(1), \mathfrak{so}(1,9) \oplus \mathfrak{so}(1,1)$, or $\mathfrak{f}_{4(-20)}$. We have in all cases
\[ \dim S^{\rho} < \dim S- \rk_{\mathbb{R}} G. \]

So by Lemma \ref{Méthode groupe complexe 2}, Proposition \ref{Proposition groupe réel} holds for $G=\Aut(\mathfrak{e}_{6(-26)})$.

\vspace{0.5cm}

\textbf{Lie algebra $\mathfrak{e}_{7(7)}$}

Here we consider the simple exceptional Lie algebra $\mathfrak{g}=\mathfrak{e}_{7(7)}$. Its outer automorphism group is of order 2 and its adjoint group is $G_{ad}=E_{7(7)}$, which is the group of real points of an algebraic group of real rank 7. This group contains a maximal compact subgroup $K$ isogenous to $\SU(8)$. We will use Lemma \ref{Méthode groupe réel} to check the first condition of Lemma \ref{Méthode groupe complexe 2}.

The group $G_{ad}=E_{7(7)}$ contains a subgroup $\bar{G}$ isogenous to $E_{6(2)} \times \SO(2)$ whose maximal compact subgroup is $\bar{K}=\SU(6)\times \SU(2) \times \SO(2)$. We see in \cite{Berger} that $E_{7(7)}/(E_{6(2)} \times \SO(2))$ is an irreducible symmetric space, furthermore we have $\rk(K)=\rk(\bar{K})=7$ and 
  \[ \dim \bar{S} = 40 < 70 - 7 = \dim S - \rk_{\mathbb{R}} G_{ad} \]
where $S$ and $\bar{S}$ are the Riemannian symmetric spaces associated to respectively $G_{ad}$ and $\bar{G}$.

Moreover, if $A \in \bar{K}$ is of finite order and non central,  we get from the results about $\mathfrak{e}_{6(2)}$
\[ \dim \bar{S} - \dim \bar{S}^A \geqslant 20-12=8  > \rk_{\mathbb{R}} G_{ad}.  \]

Lemma \ref{Méthode groupe réel} applies and shows that the first condition of Lemma \ref{Méthode groupe complexe 2} holds. 

To check the second condition we  list the local symmetric spaces associated to an involution $\rho \in \Aut(\mathfrak{e}_{7(7)})$. By the classification of Berger in \cite{Berger}, the only non compact cases are when $\mathfrak{g}^{\rho}$ is isomorphic to $\mathfrak{su}(4,4), \mathfrak{sl}(8,\mathbb{R}), \mathfrak{sl}(4,\mathbb{H}),\mathfrak{so}(6,6) \oplus \mathfrak{sl}(2,\mathbb{R})$, $\mathfrak{so}^*(12)\oplus \mathfrak{sp}(1), \mathfrak{e}_{6(6)} \oplus \mathfrak{so}(1,1)$ or $\mathfrak{e}_{6(2)} \oplus \mathfrak{so}(2)$. We have in all cases
\[ \dim S^{\rho} < \dim S- \rk_{\mathbb{R}} G. \]

So by Lemma \ref{Méthode groupe complexe 2}, Proposition \ref{Proposition groupe réel} holds for $G=\Aut(\mathfrak{e}_{7(7)})$.

\vspace{0.5cm}

\textbf{Lie algebra $\mathfrak{e}_{7(-5)}$}

Here we consider the simple exceptional Lie algebra $\mathfrak{g}=\mathfrak{e}_{7(-5)}$. Its outer automorphism group is trivial so $G=\Aut(\mathfrak{g})$ is equal to the adjoint group $G_{ad}=E_{7(7)}$, which is the group of real points of an algebraic group of real rank 4. Thus we  only have to check the first condition of Lemma \ref{Méthode groupe complexe 2} and we will again use Lemma \ref{Méthode groupe réel}.

The group $G_{ad}=E_{7(-5)}$  contains a maximal compact subgroup $K$ isogenous to $\SO(12) \times \SU(2)$. It also contains a subgroup $\bar{G}$ isogenous to $\SU(4,4)$ whose maximal compact subgroup is $\bar{K}=\mathrm{S}(\U(4)\times \U(4))$. We see in \cite{Berger} that $E_{7(-5)}/\SU(4,4)$ is an irreducible symmetric space, furthermore we have $\rk(K)=\rk(\bar{K})=7$ and 
  \[ \dim \bar{S} = 32 < 64 - 4 = \dim S - \rk_{\mathbb{R}} G_{ad} \]
where $S$ and $\bar{S}$ are the Riemannian symmetric spaces associated to respectively $G_{ad}$ and $\bar{G}$.

Moreover, if $A \in \bar{K}$ is of finite order and non central, we get from inequality \eqref{Equation 1 réelle}
\[ \dim \bar{S}^A \leqslant 24  < 32 - 4 = \dim \bar{S} -\rk_{\mathbb{R}} G_{ad}.  \]

So by Lemma \ref{Méthode groupe réel} and Lemma \ref{Méthode groupe complexe 2}, Proposition \ref{Proposition groupe réel} holds for $G=\Aut(\mathfrak{e}_{7(-5)})$.

\vspace{0.5cm}

\textbf{Lie algebra $\mathfrak{e}_{7(-25)}$}

Here we consider the simple exceptional Lie algebra $\mathfrak{g}=\mathfrak{e}_{7(-25)}$. Its outer automorphism group is of order 2 and its adjoint group is $G_{ad}=E_{7(-25)}$, which is the group of real points of an algebraic group of real rank 3. This group contains a maximal compact subgroup $K$ isogenous to $E_6 \times \SO(2)$. We will use Lemma \ref{Méthode groupe réel} to check the first condition of Lemma \ref{Méthode groupe complexe 2}.

The group $G_{ad}=E_{7(-25)}$ contains a subgroup $\bar{G}$ isogenous to $\SU(2,6)$ whose maximal compact subgroup is $\bar{K}=\mathrm{S}(\U(2) \times \U(6))$. We see in \cite{Berger} that $E_{7(-25)}/\SU(2,6)$ is an irreducible symmetric space, furthermore we have $\rk(K)=\rk(\bar{K})=7$ and 
  \[ \dim \bar{S} = 24 < 54 - 3 = \dim S - \rk_{\mathbb{R}} G_{ad} \]
where $S$ and $\bar{S}$ are the Riemannian symmetric spaces associated to respectively $G_{ad}$ and $\bar{G}$.

Moreover, if $A \in \bar{K}$ is of finite order and non central, we get from inequality \eqref{Equation 1 réelle}
\[ \dim \bar{S}^A \leqslant 20  < 24 - 3 = \dim \bar{S} -\rk_{\mathbb{R}} G_{ad}.  \]

Lemma \ref{Méthode groupe réel} applies and shows that the first condition of Lemma \ref{Méthode groupe complexe 2} holds. 

To check the second condition we  list the local symmetric spaces associated to an involution $\rho \in \Aut(\mathfrak{e}_{7(-25)})$. By the classification of Berger in \cite{Berger}, the only non compact cases are when $\mathfrak{g}^{\rho}$ is isomorphic to $\mathfrak{su}(2,6), \mathfrak{sl}(4,\mathbb{H}),\mathfrak{so}(2,10) \oplus \mathfrak{sl}(2,\mathbb{R})$, $\mathfrak{so}^*(12)\oplus \mathfrak{sp}(1), \mathfrak{e}_{6(-14)} \oplus \mathfrak{so}(2)$ or $\mathfrak{e}_{6(-26)} \oplus \mathfrak{so}(1,1)$. We have in all cases
\[ \dim S^{\rho} < \dim S- \rk_{\mathbb{R}} G. \]

So by Lemma \ref{Méthode groupe complexe 2}, Proposition \ref{Proposition groupe réel} holds for $G=\Aut(\mathfrak{e}_{7(-25)})$.

\vspace{0.5cm}

\textbf{Lie algebra $\mathfrak{e}_{8(8)}$}

Here we consider the simple exceptional Lie algebra $\mathfrak{g}=\mathfrak{e}_{8(8)}$. Its outer automorphism group is trivial so $G=\Aut(\mathfrak{g})$ is equal to the adjoint group $G_{ad}=E_{8(8)}$, which is the group of real points of an algebraic group of real rank 8. Thus we only have to check the first condition of Lemma \ref{Méthode groupe complexe 2} and we will again use Lemma \ref{Méthode groupe réel}.

The group $G_{ad}=E_{8(8)}$  contains a maximal compact subgroup $K$ isogenous to $\SO(16)$. It also contains a subgroup $\bar{G}$ isogenous to $\SO^*(16)$ whose maximal compact subgroup is $\bar{K}=\U(8)$. We see in \cite{Berger} that $E_{8(8)}/\SO(16)$ is an irreducible symmetric space, furthermore we have $\rk(K)=\rk(\bar{K})=8$ and 
  \[ \dim \bar{S} = 56 < 128 - 8 = \dim S - \rk_{\mathbb{R}} G_{ad} \]
where $S$ and $\bar{S}$ are the Riemannian symmetric spaces associated to respectively $G_{ad}$ and $\bar{G}$.

Moreover, if $A \in \bar{K}$ is of finite order and non central, we get from inequality \eqref{Equation 3 réelle}
\[ \dim \bar{S}^A \leqslant 42  < 56 - 8 = \dim \bar{S} -\rk_{\mathbb{R}} G_{ad}.  \]

So by Lemma \ref{Méthode groupe réel} and Lemma \ref{Méthode groupe complexe 2}, Proposition \ref{Proposition groupe réel} holds for $G=\Aut(\mathfrak{e}_{8(8)})$.

\vspace{0.5cm}

\textbf{Lie algebra $\mathfrak{e}_{8(-24)}$}

Here we consider the simple exceptional Lie algebra $\mathfrak{g}=\mathfrak{e}_{8(-24)}$. Its outer automorphism group is trivial so $G=\Aut(\mathfrak{g})$ is equal to the adjoint group $G_{ad}=E_{8(-24)}$, which is the group of real points of an algebraic group of real rank 4. Thus we only have to check the first condition of Lemma \ref{Méthode groupe complexe 2} and we will again use Lemma \ref{Méthode groupe réel}.

The group $G_{ad}=E_{8(-24)}$  contains a maximal compact subgroup $K$ isogenous to $E_7 \times \SU(2)$. It also contains a subgroup $\bar{G}$ isogenous to $\SO^*(16)$ whose maximal compact subgroup is $\bar{K}=\U(8)$. We see in \cite{Berger} that $E_{8(-24)}/\SO(16)$ is an irreducible symmetric space, furthermore we have $\rk(K)=\rk(\bar{K})=8$ and 
  \[ \dim \bar{S} = 56 < 112 - 4 = \dim S - \rk_{\mathbb{R}} G_{ad} \]
where $S$ and $\bar{S}$ are the Riemannian symmetric spaces associated to respectively $G_{ad}$ and $\bar{G}$.

Moreover, if $A \in \bar{K}$ is of finite order and non central, we get from inequality \eqref{Equation 3 réelle}
\[ \dim \bar{S}^A \leqslant 42  < 56 - 4 = \dim \bar{S} -\rk_{\mathbb{R}} G_{ad}.  \]

So by Lemma \ref{Méthode groupe réel} and Lemma \ref{Méthode groupe complexe 2}, Proposition \ref{Proposition groupe réel} holds for $G=\Aut(\mathfrak{e}_{8(-24)})$.

\vspace{0.5cm}

\textbf{Lie algebra $\mathfrak{g}_{2(2)}$}

Here we consider the simple exceptional Lie algebra $\mathfrak{g}=\mathfrak{g}_{2(2)}$. Its outer automorphism group is trivial, so the group $G=\Aut(\mathfrak{g})$ equals the adjoint group $G_{ad}=G_{2(2)}$, which is the group of real points of an algebraic group of real rank 2. Thus we only have to check the conditions of Lemma \ref{Méthode groupe complexe}. 

The group $G_{2(2)}$ contains a maximal compact subgroup $K$ isomorphic to $\SO(4)$. It also contains a subgroup $\bar{G}$ isogenous to $\SL(2,\mathbb{R}) \times \SL(2,\mathbb{R})$ whose maximal compact subgroup is $\bar{K}=\SO(2) \times \SO(2)$. We see in \cite{Berger} that $G_{2(2)}/(\SL(2,\mathbb{R}) \times \SL(2,\mathbb{R}))$ is an irreducible symmetric space, furthermore we have $\rk(K)=\rk(\bar{K})=2$ and 
  \[ \dim \bar{S} = 4 < 8 - 2 = \dim S - \rk_{\mathbb{R}} G \]
where $S$ and $\bar{S}$ are the Riemannian symmetric spaces associated to respectively $G=G_{ad}$ and $\bar{G}$.

Moreover, if $A \in \bar{K}$ is of finite order and non central,  we have
\[ \dim \bar{S}^A \leqslant 2  = 4 - 2 = \dim \bar{S} -\rk_{\mathbb{R}} G. \]

The equality case in the last inequality happens when $A$ is conjugated to a matrix of the form:
\[ \begin{pmatrix}
\pm I_2 & 0 \\
0 & R_{\theta} \\
\end{pmatrix} \]
with
\[ R_{\theta}=\begin{pmatrix}
\cos(\theta) & -\sin(\theta) \\
\sin(\theta) & \cos(\theta) \end{pmatrix}. \]

Assume that $A$ is of this form and that the first block is $I_2$, we will prove directly that
\[ \dim S^A < \dim S - \rk_{\mathbb{R}} G. \] 

First of all, $C_K(A)=C_{\SO(4)}(A)= \SO(2) \times \SO(2)$ is of dimension 2.

To study $C_G(A)$, we have to know to which element of $G=G_{2(2)}$ this matrix corresponds to. Recall that $G_{2(2)}$ is the group of automorphisms of the (non associative) algebra $\mathbb{O}'$ of split octonions, which is of dimension 8 over $\mathbb{R}$, and equiped with a quadratic form of signature (4,4) (see section 1.13 of in \cite{Yokota}). We can decompose $\mathbb{O}'$ into the direct sum $\mathbb{H} \oplus \mathbb{H}e_4'$ where $\mathbb{H}=\vect{1,e_1,e_2,e_3}$ is the quaternion algebra.

So $G$ is a subgroup of the special orthogonal group $\SO(3,4)$ which preserves the standard form of signature (4,4) over $\mathbb{R}^8$ and fixes 1.

The maximal compact subgroup $K$ corresponds to the stabilizer of $\mathbb{H}$, meaning the elements $\alpha \in G$ such that $\alpha(\mathbb{H})=\mathbb{H}$. Automatically we have $\alpha(\mathbb{H}e_4')=\mathbb{H}e_4'$ as this is the orthogonal of $\mathbb{H}$.  $K$ is isomorphic to $\SO(4)$ via the isomorphism who sends $\alpha$ to its restriction to $\mathbb{H}e_4'$. 

Consequently, the matrix $A$ we consider corresponds to the matrix of the restriction of an element $\alpha \in K$ to $\mathbb{H}e_4'$. This element $\alpha$ is entirely determined by the matrix $A$, indeed for example we have:
\[ \alpha(e_1e_4')=\alpha(e_1)\alpha(e_4')=\alpha(e_1)e_4'=e_1e_4' \]
so we deduce $\alpha(e_1)=e_1$. Similarly we find:
\[ \alpha(e_2)=\cos(\theta)e_2+\sin(\theta)e_3, \]
\[ \alpha(e_3)=-\sin(\theta)e_2 + \cos(\theta) e_3. \]
Knowing $\alpha(1)=1$, we have completely described $\alpha$. The matrix of $\SO(3,4)$ which corresponds to is:
\[ \tilde{A}=\begin{pmatrix}
1 & 0 & 0 & 0 \\
0 & R_{\theta} & 0 & 0 \\
0 & 0 & I_2 & 0 \\
0 & 0 & 0 & R_{\theta} \\
\end{pmatrix}. \]
Then we remark that $C_G(A) \subset C_{\SO(3,4)}(\tilde{A})$ so:
\[ \dim C_G(A) \leqslant \dim C_{\SO(3,4)}(\tilde{A})= \dim \mathrm{S}(\mathrm{O}(1,2) \times \mathrm{U}(1,1))=6. \]
Finally:
\[ \dim S^A \leqslant 6-2=4 < 6= \dim S - \rk_{\mathbb{R}} G. \]

Thus we have that 
\[ \dim S^A < \dim S - \rk_{\mathbb{R}} G \]
for every $A \in G$ of finite order and non central.
Then by Lemma \ref{Lemme clé}, Proposition \ref{Proposition groupe réel} holds for $\mathfrak{g}=\mathfrak{g}_{2(2)}$.

\vspace{0.5cm}

\textbf{Lie algebra $\mathfrak{f}_{4(4)}$}

Here we consider the simple exceptional Lie algebra $\mathfrak{g}=\mathfrak{f}_{4(4)}$. Its outer automorphism group is trivial, so the group $G=\Aut(\mathfrak{g})$ equals the adjoint group $G_{ad}=F_{4(4)}$, which is the group of real points of an algebraic group of real rank 4. Thus we only have to check the conditions of Lemma \ref{Méthode groupe complexe}. 

The group $F_{4(4)}$ contains a maximal compact subgroup $K$ isogenous to $\Sp(3) \times \Sp(1)$. It also contains a subgroup $\bar{G}$ isogenous to $\SO(4,5)$ whose maximal compact subgroup is $\bar{K}=\mathrm{S}(\mathrm{O}(4) \times \mathrm{O}(5))$. We see in \cite{Berger} that $F_{4(4)}/\SO(4,5)$ is an irreducible symmetric space, furthermore we have $\rk(K)=\rk(\bar{K})=4$ and 
  \[ \dim \bar{S} = 20 < 28 - 4 = \dim S - \rk_{\mathbb{R}} G\]
where $S$ and $\bar{S}$ are the Riemannian symmetric spaces associated to respectively $G=G_{ad}$ and $\bar{G}$.

Moreover, if $A \in \bar{K}$ is of finite order and non central,  we have by the computations in section 8 of \cite{Aramayona}
\[ \dim \bar{S}^A \leqslant 16  = 20 - 4 = \dim \bar{S} -\rk_{\mathbb{R}} G.  \]

The equality case in the last inequality happens when $A$ is conjugated to the matrix:
\[ \begin{pmatrix}
-I_8 & 0 \\
0 & 1 \\
\end{pmatrix}  \in \SO(4)\times \SO(5). \]

Assuming that $A$ is of this form, the conjugation by $A$ is an involutive automorphism of $G=G_{ad}=F_{4(4)}$ so the quotient $G/G^A$ is a symmetric space and we know by the classification in \cite{Berger} that $G^A$ is isogenous to either $\Sp(6,\mathbb{R})\times \Sp(2,\mathbb{R})$, $\Sp(2,1)\times \Sp(1)$ or $\SO(4,5)$. In all these cases the inequality
\[ \dim S^A < \dim S - \rk_{\mathbb{R}} G  \]
holds (in fact $G^A$ is isogenous to $\SO(4,5)$).

Thus we have that 
\[ \dim S^A < \dim S - \rk_{\mathbb{R}} G \]
for every $A \in G$ of finite order and non central.
Then by Lemma \ref{Lemme clé}, Proposition \ref{Proposition groupe réel} holds for $\mathfrak{g}=\mathfrak{f}_{4(4)}$, and it concludes the proof.
\end{proof}

\section{Semisimple Lie algebras}

We prove in this section the Main Theorem:

\begin{mth}
Let $\mathfrak{g}$ be a semisimple Lie algebra and $G=\Aut(\mathfrak{g})$. Then 
\[ \underline{\gd}(\Gamma)=\vcd(\Gamma) \]
for every lattice $\Gamma \subset G$.
\end{mth}

Recall that if $\mathfrak{g}$ is semisimple, it is isomorphic to a sum of simple Lie algebras $\mathfrak{g}_1  \oplus \dots \oplus \mathfrak{g}_r$. The adjoint group $G_{ad}$ of $\mathfrak{g}$ is then isomorphic to a product of simple Lie groups, that is $G_{ad}=G_1 \times G_2  \times \dots \times G_n$ where the $G_i$ are the adjoint groups of the $\mathfrak{g}_i$. We can also assume that $G_{ad}$ has no compact factors, indeed the symmetric spaces $S$ and $S^{\alpha}$ do not change if we replace $G_{ad}$ by its quotient by the compact factors. An automorphism of $\mathfrak{g}$ is the composition of a permutation $\sigma$ of the isomorphic factors of $\mathfrak{g}$ and a diagonal automorphism $\rho$ of the form $\rho=\rho_1 \oplus \dots \oplus \rho_r$ with $\rho_i \in \Aut(\mathfrak{g}_i)$. 

We explain now why does the strategy used in the previous sections not work. The point is that the inequality
\[ \dim S^{\alpha} \leqslant \dim S -\rk_{\mathbb{R}} G \]
for $\alpha \in \Aut(\mathfrak{g})$ needed to apply Lemma \ref{Lemme secours} does not hold even in the simplest cases.

In fact, if $G_{ad}=\SL(3,\mathbb{R}) \times \SL(3,\mathbb{R})$ and $A=\left(I_3,\begin{pmatrix}
-1 & 0 &0 \\
0 & -1 & 0\\
0 & 0 & 1 \\
\end{pmatrix} \right)$, we have $\dim S^A=5+3=8 > 6= \dim S-\rg_{\mathbb{R}}G$.

We bypass this this problem by improving the lower bound
\[ \vcd(\Gamma) \geqslant \dim S-\rk_{\mathbb{R}} G \]
used above.
Remember that by Theorem \ref{Theoreme Borel-Serre}
\[ \vcd(\Gamma)=\dim S-\rk_{\mathbb{Q}}\Gamma \]
as long as $\Gamma$ is arithmetic, so we want to majorate $\rk_{\mathbb{Q}}\Gamma$.
To do that we will restrict our study to irreducible lattices. Recall that in this context, a lattice $\Gamma$ in $G$ is irreducible if $\Gamma H$ is dense in $G$ for every non-compact, closed, normal subgroup $H$ of $G_{ad}$.

We prove the following result, which is probably known to experts:

\begin{prop}\label{Prop rang rationnel}
Let $\mathfrak{g}=\mathfrak{g}_1\oplus \dots \oplus \mathfrak{g}_n$ be a semisimple Lie algebra and $G_i$ the adjoint group of $\mathfrak{g}_i$ for $i=1\dots n$. Then
\[ \rk_{\mathbb{Q}}\Gamma \leqslant \min_{i=1\dots n} \rk_{\mathbb{R}} G_i \]
for every irreducible arithmetic lattice $\Gamma \subset G=\Aut(\mathfrak{g})$.
\end{prop}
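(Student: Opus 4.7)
The plan is to reduce the statement to the classical fact that every irreducible arithmetic lattice arises, up to commensurability and compact factors, from the restriction of scalars of an absolutely almost simple group over a number field, after which the inequality boils down to $\rk_F \mathbb{H} \leqslant \rk_{F_v} \mathbb{H}$ for the associated number field $F$, simple $F$-group $\mathbb{H}$ and archimedean place $v$.

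First I would replace $\Gamma$ by the finite-index subgroup $\Gamma \cap G_{ad}$; this preserves both arithmeticity and $\rk_{\mathbb{Q}}$, so we may assume $\Gamma \subset G_{ad}$. Quotienting out any compact factors of $G_{ad}$ (which leaves the $\rk_{\mathbb{R}} G_i$ unchanged for the noncompact $G_i$ and contributes only $0$ to the minimum in the statement) we may further assume $G_{ad}$ has no compact factors. The arithmeticity of $\Gamma$ then provides a connected algebraic $\mathbb{Q}$-group $\mathbb{G}$ with $\rk_{\mathbb{Q}} \Gamma = \rk_{\mathbb{Q}} \mathbb{G}$, together with compact normal subgroups $K \subset G_{ad}$ and $K' \subset \mathbb{G}_{\mathbb{R}}^0$ and an isomorphism $\varphi$ identifying the respective quotients.

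Next, I would use irreducibility of $\Gamma$ to conclude that $\mathbb{G}$ is $\mathbb{Q}$-simple. Indeed, if $\mathbb{G}$ decomposed nontrivially as a $\mathbb{Q}$-product $\mathbb{G}_1 \times \mathbb{G}_2$, then $\mathbb{G}_{\mathbb{Z}}$ would be commensurable with $\mathbb{G}_{1,\mathbb{Z}} \times \mathbb{G}_{2,\mathbb{Z}}$, producing through $\varphi$ a virtual product decomposition of $\Gamma$ that contradicts irreducibility. By the standard structure theorem for $\mathbb{Q}$-simple groups (see \cite{Margulis} or \cite{Witte}) there exist a number field $F$ and an absolutely almost simple $F$-group $\mathbb{H}$ such that $\mathbb{G}$ is $\mathbb{Q}$-isogenous to $\mathrm{Res}_{F/\mathbb{Q}} \mathbb{H}$. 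In particular $\mathbb{G}(\mathbb{R}) \cong \prod_{v \in V_{\infty}(F)} \mathbb{H}(F_v)$, and after absorbing the compact factor $K'$ the noncompact factors $\mathbb{H}(F_v)$ match the simple factors $G_i$ of $G_{ad}$ with $\rk_{F_v} \mathbb{H} = \rk_{\mathbb{R}} G_i$ for the corresponding index $i$.

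Two elementary properties of Weil restriction then close the argument: $\rk_{\mathbb{Q}} \mathrm{Res}_{F/\mathbb{Q}} \mathbb{H} = \rk_F \mathbb{H}$, and any $F$-split torus in $\mathbb{H}$ remains split after extension of scalars to $F_v$, so $\rk_F \mathbb{H} \leqslant \rk_{F_v} \mathbb{H}$ for every archimedean place $v$. Combining these yields $\rk_{\mathbb{Q}} \Gamma = \rk_F \mathbb{H} \leqslant \min_i \rk_{\mathbb{R}} G_i$. The main obstacle I expect is the bookkeeping in the passage from $\mathbb{G}$ to $G_{ad}$: one has to check carefully that, modulo $K$ and $K'$, the archimedean places of $F$ at which $\mathbb{H}$ is noncompact are in bijection with the simple factors $G_i$ in a way that preserves real rank, so that the elementary inequality for $\mathbb{H}$ really transfers to the stated bound.
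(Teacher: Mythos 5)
Your proof is correct, but it takes a genuinely different route from the paper's. After the common reduction (pass to $\Gamma \cap G_{ad}$, discard compact factors, and observe that irreducibility forces the $\mathbb{Q}$-group $\mathbb{G}$ to be $\mathbb{Q}$-simple), the paper does \emph{not} invoke the Borel--Tits structure theorem realizing $\mathbb{G}$ as $\mathrm{Res}_{F/\mathbb{Q}}\mathbb{H}$. Instead it argues directly with the factor projections $\pi_i: \mathbb{G}=\mathbb{G}_1\times\dots\times\mathbb{G}_n\to\mathbb{G}_i$: if $\mathbb{T}\subset\mathbb{G}$ is a maximal $\mathbb{Q}$-split torus, then $\ker(\pi_i|_{\mathbb{G}_{\mathbb{Q}}})$ is a normal subgroup of $\mathbb{G}_{\mathbb{Q}}$, so by the Galois rationality criterion its Zariski closure is a proper normal $\mathbb{Q}$-subgroup of the $\mathbb{Q}$-simple $\mathbb{G}$, hence finite; consequently $\ker(\pi_i|_{\mathbb{T}})$ is finite, so $\pi_i(\mathbb{T})$ is a torus of full dimension $\dim\mathbb{T}$ in $\mathbb{G}_i$, and since $\pi_i$ is defined over $\mathbb{R}$ this image is $\mathbb{R}$-split, giving $\rk_{\mathbb{Q}}\mathbb{G}=\dim\mathbb{T}\leqslant\rk_{\mathbb{R}}\mathbb{G}_i=\rk_{\mathbb{R}}G_i$. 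Your route, via restriction of scalars, instead reduces to the two classical rank identities $\rk_{\mathbb{Q}}\mathrm{Res}_{F/\mathbb{Q}}\mathbb{H}=\rk_F\mathbb{H}$ and $\rk_F\mathbb{H}\leqslant\rk_{F_v}\mathbb{H}$. What each buys: the paper's torus-projection argument is more elementary and self-contained (it only needs the Galois criterion and the definition of $\mathbb{Q}$-simplicity), while yours is shorter once you are willing to import the structure theory of $\mathbb{Q}$-simple groups, and it makes the source of the bound --- base change from $F$ to a completion $F_v$ --- conceptually transparent. The bookkeeping you flag (matching noncompact archimedean places of $F$ to the simple factors $G_i$ with the right real rank, after absorbing the compact $K'$) is real but routine, exactly as you suspect, and does not hide a gap.
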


Proposition \ref{Prop rang rationnel} will follow from the following theorem proved in \cite{Johnson}

\begin{thm}\label{Theoreme Q-simple}
Let $G=G_1 \times \dots \times G_n$ ($n \geqslant 2$) be a product of non-compact connected simple Lie groups. The following statements are equivalent:
\begin{enumerate}
\item $G$ contains an irreducible lattice.
\item $G$ is isomorphic as a Lie group to $(\mathbb{G}_{\mathbb{R}})^0$, where $\mathbb{G}$ is an $\mathbb{Q}$-simple algebraic group.
\item $G$ is isotypic, that is the complexifications of the Lie algebras of the $G_i$ are isomorphic.
\end{enumerate}
In addition to that, in this case $G$ contains both cocompact and non cocompact irreducible lattices.
\end{thm}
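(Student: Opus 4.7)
The plan is to prove the three-way equivalence as a cycle, combining the structure theorem for $\mathbb{Q}$-simple algebraic groups (restriction of scalars) on the algebraic side with Margulis arithmeticity on the lattice side. The last assertion about cocompact versus non-cocompact irreducible lattices is a matter of tuning the construction used for $(3) \Rightarrow (2)$.

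For $(2) \Rightarrow (3)$, I would invoke the classical structure theorem: every $\mathbb{Q}$-simple algebraic group $\mathbb{G}$ is of the form $\mathrm{Res}_{k/\mathbb{Q}} \mathbb{H}$ for some number field $k$ and absolutely simple group $\mathbb{H}/k$. Then $(\mathbb{G}_{\mathbb{R}})^0 = \prod_{v\mid \infty} \mathbb{H}(k_v)^0$, where the Lie algebra of each factor has complexification canonically isomorphic to $\mathfrak{h} \otimes_k \mathbb{C}$, independently of $v$; hence $G$ is isotypic. Conversely, for $(3) \Rightarrow (2)$, given an isotypic $G$, I would choose a number field $k$ with the correct number of real and complex archimedean places so that real places produce $\mathbb{R}$-simple factors and complex places produce complex simple factors, and then an absolutely simple $\mathbb{H}/k$ of the same absolute type whose archimedean completions realize the $G_i$ up to isogeny.

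For $(2) \Rightarrow (1)$, I would take $\Gamma = \mathbb{H}(\mathcal{O}_k)$; by Borel--Harish-Chandra this is a lattice in $\mathbb{G}_{\mathbb{R}}$. Irreducibility reduces to showing that the projection of $\Gamma$ onto any proper subproduct of simple factors is dense, which follows from the Borel density theorem (or from strong approximation) applied to the image. For the converse $(1) \Rightarrow (2)$, note that $G$ has real rank at least $2$ since $n \geqslant 2$ with each factor non-compact, so Margulis arithmeticity (Theorem \ref{Théorème Margulis}) applies and produces a connected $\mathbb{Q}$-group $\mathbb{G}'$ with $\Gamma$ commensurable to $\mathbb{G}'_{\mathbb{Z}}$ up to compact corrections. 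Since $G$ has no compact factors, the compact subgroups in the definition of arithmeticity may be taken trivial. If $\mathbb{G}'$ admitted a non-trivial $\mathbb{Q}$-decomposition $\mathbb{G}' = \mathbb{G}'_1 \times \mathbb{G}'_2$, then $\mathbb{G}'_{\mathbb{Z}}$ would be commensurable to $\mathbb{G}'_{1,\mathbb{Z}} \times \mathbb{G}'_{2,\mathbb{Z}}$; the projection of $\Gamma$ to $\mathbb{G}'_{1,\mathbb{R}}$ would then be contained in the discrete subgroup $\mathbb{G}'_{1,\mathbb{Z}}$ and hence not dense, violating irreducibility. So $\mathbb{G}'$ is $\mathbb{Q}$-simple.

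For the final claim, I would refine the $(3) \Rightarrow (2)$ construction by choosing appropriate forms with the prescribed archimedean local data. To produce a non-cocompact irreducible lattice, take $\mathbb{H}$ to be $k$-isotropic (for instance, the standard split form of the correct absolute type); then $\mathbb{G} = \mathrm{Res}_{k/\mathbb{Q}} \mathbb{H}$ has positive $\mathbb{Q}$-rank, and the compactness criterion recalled at the end of Section 2.6 ($\rk_{\mathbb{Q}} \Gamma = 0$ iff $\Gamma$ is cocompact) forces $\mathbb{H}(\mathcal{O}_k)$ to be non-cocompact. For a cocompact example, one selects an inner form of $\mathbb{H}$ that is $k$-anisotropic (e.g., the norm-one group of a suitable central division algebra, or a form defined by an anisotropic Hermitian form) while keeping the archimedean local data unchanged; then $\mathbb{G}$ is $\mathbb{Q}$-anisotropic and $\mathbb{H}(\mathcal{O}_k)$ is cocompact. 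The main obstacle in this whole plan is the realization step in $(3) \Rightarrow (2)$, as well as its refinement in the last part: producing an absolutely simple $\mathbb{H}/k$ of prescribed type with prescribed archimedean local forms, and moreover in two variants (isotropic and anisotropic over $k$) with identical archimedean profile, is a Galois cohomology problem that rests on Hasse-principle-type results (Borel--Serre, Kneser, Harder) describing $H^1(k, \mathbb{H})$ in terms of local data at the places of $k$.
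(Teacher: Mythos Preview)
The paper does not prove this theorem; it is simply quoted from \cite{Johnson} and used as a black box in the proof of Proposition \ref{Prop rang rationnel}. So there is no ``paper's proof'' to compare against, and your sketch is the standard argument one would give.

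Your outline is essentially correct, but there is one genuine gap in the step $(1)\Rightarrow(2)$. You write that ``since $G$ has no compact factors, the compact subgroups in the definition of arithmeticity may be taken trivial''. This is true for $K\subset G^0$, but not in general for $K'\subset(\mathbb{G}'_{\mathbb{R}})^0$: Margulis arithmeticity only gives an isomorphism $G\simeq(\mathbb{G}'_{\mathbb{R}})^0/K'$ with $\mathbb{G}'$ $\mathbb{Q}$-simple, and $K'$ may well be nontrivial (think of a cocompact lattice in $\PSL(2,\mathbb{R})\times\PSL(2,\mathbb{R})$ built from a quaternion algebra over a totally real field of degree $>2$; the arithmetic group $\mathbb{G}'$ then has extra compact factors $\mathrm{SU}(2)$ at the remaining real places). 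The paper itself notes (end of Section~2.6) that dropping $K'$ requires $\Gamma$ non-uniform. So from your argument you get $\mathbb{G}'$ $\mathbb{Q}$-simple, but $G$ need not be $(\mathbb{G}'_{\mathbb{R}})^0$.

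The fix is immediate within your own framework: from $\mathbb{G}'$ $\mathbb{Q}$-simple, the restriction-of-scalars description $\mathbb{G}'=\mathrm{Res}_{k/\mathbb{Q}}\mathbb{H}$ shows that \emph{all} archimedean factors of $(\mathbb{G}'_{\mathbb{R}})^0$, compact or not, have the same complexified Lie algebra; hence $G$ (the non-compact part) is isotypic, establishing $(1)\Rightarrow(3)$. Then your $(3)\Rightarrow(2)$ produces a \emph{new} $\mathbb{Q}$-simple group $\mathbb{G}''$, over a possibly smaller field, with $G\simeq(\mathbb{G}''_{\mathbb{R}})^0$ on the nose. In other words, route the argument as $(1)\Rightarrow(3)\Rightarrow(2)$ rather than $(1)\Rightarrow(2)$ directly. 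The rest of your sketch, including the identification of the realization step (prescribing archimedean local forms via Galois cohomology and Hasse-type results) as the crux of $(3)\Rightarrow(2)$ and of the cocompact/non-cocompact addendum, is accurate.
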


Recall that an algebraic group $\mathbb{G}$ defined over $\mathbb{Q}$ is said to be \textit{$\mathbb{Q}$-simple} if it does not contain non-trivial connected normal subgroups defined over $\mathbb{Q}$.

Then we can prove Proposition \ref{Prop rang rationnel}.

\begin{proof}[Proof of Proposition \ref{Prop rang rationnel}]

Remark that $\rk_{\mathbb{Q}}\Gamma=\rk_{\mathbb{Q}}(\Gamma \cap G_{ad})$. Moreover if $\Gamma$ is an irreducible lattice of $G$ then $\Gamma \cap G_{ad}$ is an irreducible lattice of $G_{ad}$, so we can assume that $\Gamma \subset G_{ad}$.
Remember that $G_{ad}=G_1\times \dots \times G_n$ and that we assumed that none of the $G_i$ is compact. If $n=1$ the result is trivial so assume that $n\geqslant 2$. Then $\rk_{\mathbb{R}} G_{ad} \geqslant 2$ so $\Gamma$ is arithmetic by Theorem \ref{Théorème Margulis} and there exists a Lie group isomorphism $\varphi: G_{ad} \rightarrow (\mathbb{G}_{\mathbb{R}})^0$, where $\mathbb{G}$ is $\mathbb{Q}$-simple by Theorem \ref{Theoreme Q-simple}. Then we have $\rk_{\mathbb{Q}} \Gamma=\rk_{\mathbb{Q}} \mathbb{G}$. The algebraic group $\mathbb{G}$ is isomorphic to a product $\mathbb{G}_1 \times \dots \times \mathbb{G}_n$ where the $\mathbb{G}_i$ are $\mathbb{R}$-groups with $(\mathbb{G}_i)_{\mathbb{R}}$ isomorphic to $G_i$ for $i=1\dots n$ (we can define $\mathbb{G}_i$ as the centralizer in $\mathbb{G}$ of the product $\prod_{k \neq i} G_k$). We note $\pi_i$ the canonical projection of $\mathbb{G}$ on $\mathbb{G}_i$. Let $\mathbb{T} \subset \mathbb{G}$ be maximal $\mathbb{Q}$-split torus.  Our goal is to prove that the restriction $\pi_i |_{\mathbb{T}}$ is of finite kernel. On the one hand, $\ker(\pi_i|_{\mathbb{G}_{\mathbb{Q}}}) \subset \mathbb{G}_{\mathbb{Q}}$ is a normal subgroup of $\mathbb{G}_{\mathbb{Q}}$ and $\mathbb{G}$ is defined over $\mathbb{Q}$, so the Zariski closure of $\ker(\pi_i|_{\mathbb{G}_{\mathbb{Q}}})$ is defined over $\mathbb{Q}$ by the Galois rationality criterion. However it is a non trivial normal subgroup of $\mathbb{G}$ which is $\mathbb{Q}$-simple so $\ker(\pi_i|_{\mathbb{G}_{\mathbb{Q}}})$ is finite (it may be not connected). So  $\ker(\pi_i|_{\mathbb{T}_{\mathbb{Q}}})$ is finite too. But  $\ker(\pi_i|_{\mathbb{T}})$ is a subgroup of the $\mathbb{Q}$-split torus $\mathbb{T}$, so its identity component is a $\mathbb{Q}$-split torus, and we have just seen its group of rational points is finite, so $\ker(\pi_i|_{\mathbb{T}})$  is finite too.

Then the image of $\mathbb{T}$ by $\pi_i$ is a torus of $\mathbb{G}_i$ of the same dimension than $\mathbb{T}$ (see \cite[Cor. 8.4 p.114]{Borel}). It may not be $\mathbb{Q}$-split because the projection is not defined over $\mathbb{Q}$, but it is $\mathbb{R}$-split as the projection is defined over $\mathbb{R}$, so:
\[ \rg_{\mathbb{Q}} \mathbb{G}=\dim \mathbb{T} \leqslant \rg_{\mathbb{R}} \mathbb{G}_i=\rg_{\mathbb{R}} G_i. \]
\end{proof}

We can now conclude the proof of our Main Theorem. 

\begin{proof}[Proof of the Main Theorem]
If $\mathfrak{g}$ is simple then the result follows from Propositions \ref{Prop rang 1}, \ref{Prop groupe complexe} and \ref{Proposition groupe réel}. Then we assume that $\mathfrak{g}=\mathfrak{g}_1\oplus \dots \oplus \mathfrak{g}_n$ with $n \geqslant 2$. We can also assume that the adjoint group $G_{ad}$ of $\mathfrak{g}$ is of the form $G_{ad}=G_1\times \dots \times G_n$ where the $G_i$ are simple, non-compact and $G_i$ is the adjoint group of $\mathfrak{g}_i$. 

We begin with the case where $\Gamma$ is irreducible. Then we have
\[ \rk_{\mathbb{Q}}\Gamma \leqslant \min_{i=1\dots r} \rk_{\mathbb{R}} G_i. \]
As $\rk_{\mathbb{R}} G \geqslant 2$, $\Gamma$ is arithmetic by Theorem \ref{Théorème Margulis}. Remember that $\Gamma \cap G_{ad}$ is also an irreducible arithmetic lattice of $G_{ad}$. We can then assume that $G_{ad}=(\mathbb{G}_{\mathbb{R}})^0$, where $\mathbb{G}=\mathbb{G}_1\times \dots \times \mathbb{G}_n$ is a semisimple $\mathbb{Q}$-group, which is $\mathbb{Q}$-simple by Theorem \ref{Theoreme Q-simple}. As $G_{ad}$ has trivial center, we can assume that $\mathbb{G}$ is centerfree. In this case we have $\Gamma \cap G_{ad} \subset \mathbb{G}_{\mathbb{Q}}$.

We want to use Lemma \ref{Lemme clé}. Let $\alpha \in \Gamma$ of finite order non central. Then $\alpha$ is of the form $\sigma \circ \rho$ where $\sigma$ is a permutation of the isomorphic factors of $\mathfrak{g}$ and $\rho=\rho_1 \oplus \dots \rho_r$ with $\rho_i \in \Aut(\mathfrak{g}_i)$. Assume for a while that $\sigma$ is trivial. We identify $\alpha \in \Aut(\mathfrak{g})$ (resp. $\rho_i \in \Aut(\mathfrak{g}_i)$) with the corresponding automorphism of $G_{ad}$ (resp. $G_i$). The key point is to remark that for all $i$ between 1 and $n$, the automorphism $\rho_i$ is not trivial. In fact if $A \in \Gamma \cap G_{ad}\subset \mathbb{G}_{\mathbb{Q}}$, we can identify it with the inner automorphism $\Ad(A)$ and we have
\[ \alpha \circ \Ad(A) \circ \alpha^{-1}=\Ad(\alpha(A))  \in \Gamma \cap G_{ad} \subset \mathbb{G}_{\mathbb{Q}}, \]
so $\alpha(A)$ lies also in $\mathbb{G}_{\mathbb{Q}}$. Recall that we have seen in the proof of Proposition \ref{Prop rang rationnel} that the projections $\pi_i|_{\mathbb{G}_{\mathbb{Q}}}: \mathbb{G}_{\mathbb{Q}} \rightarrow G_i$ are injective. So if $\rho_i$ is trivial, we have $\pi_i(A)=\pi_i(\alpha(A))$ for each $A \in \Gamma \cap G_{ad} \subset \mathbb{G}_{\mathbb{Q}}$, which leads to $\alpha(A)=A$. Then $\alpha$ is trivial on $\Gamma \cap G_{ad}$ which is Zariski-dense in $G_{ad}$, so $\alpha$ is trivial.

Finally, $\alpha=\rho=\rho_1 \oplus \dots \oplus \rho_n$ where each $\rho_i$ is a non trivial automorphism of $G_i$. By Proposition \ref{Prop rang rationnel} we also have
 \[ \rk_{\mathbb{Q}}\Gamma \leqslant \min_{i=1\dots n} \rk_{\mathbb{R}} G_i. \]
 
 Then if we note $S$ the symmetric space associated to $G$ and $S_i$ those associated to $G_i$ and we have by Propositions \ref{Prop groupe complexe} and \ref{Proposition groupe réel}
\begin{align*}
\dim S^{\alpha}= \sum_{i=1}^n \dim S_i^{\rho_i} &\leqslant \sum_{i=1}^n(\dim S_i -\rk_{\mathbb{R}} G_i) \\
 &\leqslant \dim S-\sum_{i=1}^n \rk_{\mathbb{R}} G_i \\
 &< \dim S-\rk_{\mathbb{Q}} \Gamma.
\end{align*} 
 as we assumed $n\geqslant 2$.
 
 By Theorem \ref{Theoreme Borel-Serre}, $\dim S^A < \vcd(\Gamma)$ and Lemma \ref{Lemme clé} gives us the result.

If $\sigma$ is not trivial, the fixed point set will be even smaller. Indeed, assume for simplicity that $\mathfrak{g}=\mathfrak{g}_1 \oplus \mathfrak{g}_2$ where $\mathfrak{g}_1$ and $\mathfrak{g}_2$ are isomorphic, and $\alpha \in \Aut(\mathfrak{g})$ is of the form
\[ \alpha(x_1,x_2)=(\rho_1(x_2),\rho_2(x_1)) \]
with $\rho_1,\rho_2 \in \Aut(\mathfrak{g}_1)=\Aut(\mathfrak{g}_2)$. Then the fixed point set $S^{\alpha}$ is $S_1^{\rho_1\rho_2}$ where $S_1$ is the symmetric space associated to $\mathfrak{g}_1$. In fact the elements fixed by $\alpha$ are of the form $(x_1,\rho_2(x_1))$ where $x_1$ is a fixed point of $\rho_1\rho_2$. So we have
\[ \dim S^{\alpha} \leqslant \dim S_1 < \dim S - \rk_{\mathbb{R}} G_1 \leqslant \dim S - \rk_{\mathbb{Q}}  \Gamma=\vcd(\Gamma) \]
as $\dim S=2 \dim S_1$ and $\dim S_1 > \rk_{\mathbb{R}} G_1$. The same argument works for a higher number of summands by decomposing the permutation $\sigma$ into disjoint cycles.

Finally if $\Gamma$ is reducible, there exists a decomposition of $G=H_1 \times H_2$ such that the projections $\pi_1(\Gamma)$ and $\pi_2(\Gamma)$ are lattices in $H_1$ and $H_2$, and then $\Gamma \subset \pi_1(\Gamma) \times \pi_2(\Gamma)$ (see the proof of \cite[Prop. 4.3.3]{Witte}). It follows by induction that $\Gamma$ is contained in a product of irreducible lattices of factors of $G$. We will treat the case where $G=H_1 \times H_2$, $\Gamma \subset \Gamma_1 \times \Gamma_2$ and $\Gamma_1$ and $\Gamma_2$ are irreducible lattices of $H_1$ and $H_2$. As $\Gamma$ is of finite index in $G$, it is also of finite index in $\Gamma_1 \times \Gamma_2$ so $\vcd(\Gamma)=\vcd(\Gamma_1 \times \Gamma_2)$. If we note $S$, $S_1$, $S_2$ the symmetric spaces associated to $G$, $H_1$, $H_2$, by Theorem \ref{Theoreme Borel-Serre} we have:
\begin{align*}
\vcd(\Gamma)=\vcd(\Gamma_1 \times \Gamma_2)&=\dim S_1 + \dim S_2-\rk_{\mathbb{Q}} \Gamma _1 -\rk_{\mathbb{Q}} \Gamma_2 \\ &=\vcd(\Gamma_1)+\vcd(\Gamma_2).
\end{align*}
Finally, we have:
 \[ \underline{\gd}(\Gamma) \leqslant \underline{\gd}(\Gamma_1 \times \Gamma_2) \leqslant \underline{\gd}(\Gamma_1)+\underline{\gd}(\Gamma_2)\]
 because if $X_1$ and $X_2$ are models for $\underline{E}\Gamma_1$ and $\underline{E}\Gamma_2$, $X_1 \times X_2$ is a model for $\underline{E}(\Gamma_1\times \Gamma_2)$. As $\Gamma_1$ and $\Gamma_2$ are irreducible, we have $\underline{\gd}(\Gamma_1)=\vcd(\Gamma_1)$ and $\underline{\gd}(\Gamma_2)=\vcd(\Gamma_2)$ so:
\[ \underline{\gd}(\Gamma) \leqslant \vcd(\Gamma). \]

The other inequality is always true, so it concludes the proof of the Main Theorem.
\end{proof}

We will end with the proof of Corollaries \ref{Corollaire modèle cocompact} and \ref{Corollaire commensurable}.

\begin{proof}[Proof of Corollary \ref{Corollaire modèle cocompact}]
The case of real rank 1 is treated in Proposition 2.6 in \cite{Aramayona}. For higher real rank, we know by the Main Theorem that there exists a model for $\underline{E}\Gamma$ of dimension $\vcd(\Gamma)$. We also know that the Borel-Serre bordification is a cocompact model for $\underline{E}\Gamma$. Then using the same construction as in the proof of Corollary 1.1 in \cite{Aramayona}, one has a cocompact model for $\underline{E}\Gamma$ of dimension $\vcd(\Gamma)$. As all models of $\underline{E}\Gamma$ are $\Gamma$-equivariantly homotopy equivalent and the symmetric space $S$ is also a model for $\underline{E}\Gamma$, we conclude that $S$ is $\Gamma$-equivariantly homotopy equivalent to a cocompact model for $\underline{E}\Gamma$ of dimension $\vcd(\Gamma)$.
\end{proof}

\begin{proof}[Proof of Corollary \ref{Corollaire commensurable}]

We have to prove that if $\Gamma_1 \subset \Aut(\mathfrak{g})$ and $\Gamma_2$ have a common subgroup $\tilde{\Gamma}$ of finite index, then  $\underline{\gd}(\Gamma_2)=\vcd(\Gamma_2)$. To that end, we will prove that $\Gamma_2$ is essentially also a lattice in $\Aut(\mathfrak{g})$. First note that $\tilde{\Gamma}$ is a lattice in $\Aut(\mathfrak{g})$, so we can assume that $\tilde{\Gamma}=\Gamma_1 \subset \Gamma_2$. We can also assume that $\Gamma_1$ is a normal finite index subgroup of $\Gamma_2$. Then $\Gamma_2$ acts by conjugation on $\Gamma_1$. By Mostow rigidity Theorem (see for example \cite[Thm. 15.1.2]{Witte}), automorphisms of $\Gamma_1$ can be extended to automorphisms of $G_{ad}$, so we have a morphism $\Gamma_2 \rightarrow \Aut(G_{ad})$. The kernel $N$ of this morphism does not intersect $\Gamma_1$ (since $\Gamma_1$ is centerfree, as it is a lattice and thus it is Zariski-dense in $G_{ad}$) and $\Gamma_1$ is of finite index in $\Gamma_2$, so $N$ is finite. Then $\Gamma_2/N$ is isomorphic to a lattice in $\Aut(G_{ad})$. The result follows now from the Main Theorem and Lemma \ref{Finite normal subgroups}.

Note that Mostow rigidity theorem does not apply to the group $\PSL(2,\mathbb{R})$, whose associated symmetric space is the hyperbolic plane. In this case the lattice $\Gamma_1$ is either a virtually free group or a virtually surface group. In the first case the group $\Gamma_2$ is also virtually free, so there exists a model for $\underline{E}\Gamma_2$ which is a tree (see \cite{Karass}), and $\underline{\gd}(\Gamma_2)=\vcd(\Gamma_2)=1$. In the second case, $\Gamma_2$ acts as a convergence group on $\mathbb{S}^1=\partial_{\infty} \Gamma_1$, so it is also a Fuchsian group (see \cite{Gabai}), that is $\Gamma_2$ is isomorphic to a cocompact lattice of $\PSL(2,\mathbb{R})$. Finally we have $\underline{\gd}(\Gamma_2)=\vcd(\Gamma_2)=2$.
\end{proof}

\bibliographystyle{plain}
\bibliography{biblio_semisimple}
\nocite{Yokota}
\nocite{Aramayona}
\nocite{Helgason}
\nocite{Berger}
\nocite{Johnson}
\nocite{Witte}
\nocite{Borel}
\nocite{Leary}
\nocite{Knapp}
\nocite{Borel-Serre}
\nocite{Brown}
\nocite{Degrijse}
\nocite{Luck-Meintrup}
\nocite{Brady}
\nocite{Leary-Nucinkis}
\nocite{Martinez}
\nocite{Degrijse-Petrosyan}
\nocite{Djokovic}
\nocite{Gundogan}
\nocite{Gray-Wolf}
\nocite{Onishchik-Vinberg}
\nocite{Gundogane}
\nocite{Ji}
\nocite{Borell}
\nocite{Margulis}
\nocite{Degrijse-Souto}
\nocite{Aramayona-Martinez}
\nocite{Luck}
\nocite{Vogtmann}
\nocite{Pettet-Souto}
\nocite{Souto-Pettet}
\nocite{Gabai}
\nocite{Karass}
\nocite{Ash}

\hspace{0.5cm}

\textsc{IRMAR, Université de Rennes 1}

\textit{E-mail address}: \texttt{cyril.lacoste@univ-rennes1.fr}

\end{document}